\let\ORIlabel\label
\let\ORIrefstepcounter\refstepcounter
   \let\label\ORIlabel 
   \let\refstepcounter\ORIrefstepcounter}
\newcommand{\ord}{\mathrm{ord}}
\crefname{hypothesis}{Hypothesis}{Hypotheses}
\newcommand{\theoremnum}{}
\newtheorem*{theorem**}{Theorem\theoremnum}
\newcommand{\propositionnum}{}
\newtheorem*{proposition**}{Proposition\propositionnum}
\newcommand{\corollarynum}{}
\newtheorem*{corollary**}{Corollary\corollarynum}
\newcommand\RSloop{\@ifnextchar\bgroup\RSloopa\RSloopb}
\newcommand\RSloopa[1]{\bgroup\RSloop#1\relax\egroup\RSloop}
\newcommand\RSloopb[1]%
\newcommand\X{0}
\newcommand\RS[1]%
\newcommand\RSdef[1]{\expandafter\def\csname RS:#1\endcsname}
\newlength\RSu
\newcommand{\commonRootTree}{\begin{tikzpicture}
        \coordinate (root) at (0, 0);
        \filldraw (root) circle [radius=1.5pt];
        
        \coordinate (n1) at ($(root) + (159:4.3ex)$);
        \node[] at ($(n1) + (0,1ex)$) {\footnotesize $\tau_1$};
        
        \coordinate (n2) at ($(root) + (130:2ex)$);
        \node[] at ($(n2) + (0,1ex)$) {\footnotesize $\tau_2$};

        \coordinate (n3) at ($(root) + (50:2ex)$);
        \node[] at ($(n3) + (0,1ex)$) {\footnotesize $\cdots$};

        \coordinate (n4) at ($(root) + (22:4.1ex)$);
        \node[] at ($(n4) + (0,1ex)$) {\footnotesize $\tau_k$};
        
        \draw [thin] (n1) to (root);
        \draw [thin] (n2) to (root);
        \draw [thin] (n4) to (root);
    \end{tikzpicture}}
\newcommand{\treeFour}{
\begin{tikzpicture}
\coordinate (root) at (0, 0);
\filldraw (root) circle [radius=1.5pt];

\coordinate (left) at ($(root) + (130:5ex)$);
\filldraw (left) circle [radius=1.5pt];

\coordinate (right) at ($(root) + (50:5ex)$);
\filldraw (right) circle [radius=1.5pt];

\coordinate (rl) at ($(right) + (130:5ex)$);
\filldraw (rl) circle [radius=1.5pt];

\coordinate (rr) at ($(right) + (50:5ex)$);
\filldraw (rr) circle [radius=1.5pt];

\draw [thin] (left) to (root) to (right);
\draw [thin] (rl) to (right) to (rr);
\draw [thin] (rl) to (right) to (rr);

\node[] at ($(root) + (0ex,-2ex)$) {\small $b_i$};
\node[] at ($(left) + (-0.1ex,-2.5ex)$) {\small $a_{ik}$};
\node[] at ($(right) + (0.1ex,-2.5ex)$) {\small $a_{ij}$};
\node[] at ($(rl) + (-0.1ex,-2.5ex)$) {\small $a_{j\ell}$};
\node[] at ($(rr) + (0.6ex,-2.5ex)$) {\small $a_{jm}$};
\end{tikzpicture}}
\newcommand{\treeFive}{
\begin{tikzpicture}
\coordinate (root) at (0, 0);
\filldraw (root) circle [radius=1.5pt];

\coordinate (left) at ($(root) + (130:5ex)$);
\filldraw (left) circle [radius=1.5pt];

\coordinate (right) at ($(root) + (50:5ex)$);
\filldraw (right) circle [radius=1.5pt];

\coordinate (rl) at ($(right) + (130:5ex)$);
\filldraw (rl) circle [radius=1.5pt];

\coordinate (rr) at ($(right) + (50:5ex)$);
\filldraw (rr) circle [radius=1.5pt];

\draw [thin] (left) to (root);
\draw [thin] (rl) to (right) to (rr);
\draw [thin] (rl) to (right) to (rr);

\node[] at ($(root) + (0ex,-2ex)$) {\small $b_i$};
\node[] at ($(left) + (-0.1ex,-2.5ex)$) {\small $a_{ik}$};
\node[] at ($(right) + (0ex,-2ex)$) {\small $b_j$};
\node[] at ($(rl) + (-0.1ex,-2.5ex)$) {\small $a_{j\ell}$};
\node[] at ($(rr) + (0.6ex,-2.5ex)$) {\small $a_{jm}$};
\end{tikzpicture}}
\mathchardef\mhyphen="2D
\title{Explicit and Effectively Symmetric Runge--Kutta Methods}
\author{
Daniil Shmelev\thanks{Department of Mathematics, Imperial College London, London, United Kingdom (\email{daniil.shmelev23@imperial.ac.uk}, \email{c.salvi@imperial.ac.uk}). Corresponding author: D. Shmelev.}
\and Kurusch Ebrahimi-Fard\thanks{Department of Mathematical Sciences, Norwegian University of Science and Technology, Trondheim, Norway (\email{kurusch.ebrahimi-fard@ntnu.no}).}
\and Nikolas Tapia\thanks{Weierstrass Institute for Applied Analysis and Stochastics \& Institute for Mathematics, Humboldt University of Berlin, Berlin, Germany (\email{tapia@wias-berlin.de}).}
\and Cristopher Salvi\footnotemark[1]
}
\begin{document}

\maketitle

\begin{abstract}
Symmetry is a key property of numerical methods. The geometric properties of symmetric schemes make them an attractive option for integrating Hamiltonian systems, whilst their ability to exactly recover the initial condition without the need to store the entire solution trajectory makes them ideal for the efficient implementation of Neural ODEs. In this work, we employ a Hopf algebraic approach to the study of symmetric B‑series methods. We show that every B-series method can be written as the composition of a symmetric and \say{antisymmetric} component, and explore the structure of this decomposition for Runge--Kutta schemes. A major bottleneck of symmetric Runge--Kutta schemes is their implicit nature, which requires solving a nonlinear system at each step. By introducing a new set of order conditions which minimise the antisymmetric component of a scheme, we derive what we call Explicit and Effectively Symmetric (EES) schemes -- a new class of explicit Runge--Kutta schemes with near-symmetric properties. We present examples of second-order EES schemes and demonstrate that, despite their low order, these schemes readily outperform higher-order explicit schemes such as RK4 and RK5, and achieve results comparable to implicit symmetric schemes at a significantly lower computational cost.
\end{abstract}

\begin{keywords}
Symmetric numerical schemes, B-series methods, Hopf algebra, rooted trees, even-odd decomposition 
\end{keywords}

\begin{MSCcodes}
16T05, 65L05, 65L06, 05C05
\end{MSCcodes}

%%%%%%%%%%%%%%%%%%%%%%%%%%%%%%%%%%%%%%%%%%%%%
\section{Introduction}
\label{sec:intro}
%%%%%%%%%%%%%%%%%%%%%%%%%%%%%%%%%%%%%%%%%%%%%

We consider a smooth vector field $f : \mathbb{R}^N \to \mathbb{R}^N$ and the corresponding first order autonomous ordinary differential equation (ODE)
\begin{equation}
\label{eq:ode}
	y'(t) = f(y(t)).
\end{equation} 
Given the initial condition $y(0) = p$, the corresponding flow map $\Phi_{s,f}: \mathbb{R}^N \to \mathbb{R}^N$, $s \in \mathbb{R}$, describes the solution $y(s) = \Phi_{s,f}(p)$ of \eqref{eq:ode} at $t=s$. As such, the flow map defines a 1-parameter family of diffeomorphisms preserving the composition  $\Phi_{s,f} \circ \Phi_{s',f} = \Phi_{s+s',f}$, and thus the inverse $\Phi_{-s,f} = \Phi_{s,f}^{-1}$. 

\smallskip

Finding numerical integrators, i.e., methods to compute numerical solutions of \eqref{eq:ode} plays a central role in many areas of applied mathematics and the mathematical sciences. A prominent class of numerical integration methods are Runge--Kutta methods, among others. Such methods are typically formulated as transformations: given the value of the solution at time $t$, the method applies a specific function to compute the value at time $t + h$, where $h$ denotes the step size. Thus, a numerical method with time step $h$ applied to the vector field $f$ is characterised by a map $\Psi_{h,f} : \mathbb{R}^N \to \mathbb{R}^N$ approximating the flow map. However, it is important to note a crucial difference between a numerical flow and the exact flow: $\Psi_{h,f}$ does not, in general, define a 1-parameter family, i.e., $\Psi_{h,f} \circ \Psi_{h',f}\neq \Psi_{h+h',f}$. Numerical methods that do, however, satisfy $\Psi_{-h,f} = \Psi_{h,f}^{-1}$ are called reversible or symmetric. More formally, such methods are characterised by invariance under the so-called symmetric-adjoint transformation, which sends $\Psi_h$ to $\Psi_{h,f}^* := \Psi^{-1}_{-h,f}$. 

Invariance and symmetry play fundamental roles in the formulation and analysis of numerical integrators for differential equations, especially when the goal is to retain the geometric and structural properties of the underlying continuous system \cite{Hairer-etal2006}. It is therefore crucial to identify particular properties of numerical methods that improve the preservation of such properties. These include symplecticity, energy conservation, and volume preservation -- each of which plays a critical role in the long-term qualitative behavior of the numerical solution. In this respect, symmetric integrators -- those invariant under time reversal -- are of special interest. They exhibit favorable long-term behavior and help preserve key qualitative features of the exact flow, which is essential for accurately capturing the system’s dynamics over extended time intervals. Classical applications of symmetric schemes include integrating physical systems. In particular, they are often well suited to the integration of Hamiltonian systems. In fact, in the case of linear Hamiltonian systems, the property of reversibility is equivalent to the symplecticity of a scheme \cite{feng2010symplectic}. For more details, we refer the reader to Chartier's concise presentation of symmetric methods \cite{Chartier2015}. In addition to classical applications, symmetric schemes have recently seen a significant interest in machine learning, where they have proven useful for training neural ODEs \cite{chen2018neural}. This has led to the design of several new symmetric schemes \cite{mccallum2024efficient, zhuang2021mali, kidger2021efficient}, typically of low order, to be used to train such models.\par\medskip

The main drawback of symmetric schemes lies in their typically low efficiency. Indeed, it is well known that any symmetric Runge–Kutta scheme must be implicit, leading to significantly higher computational costs compared to explicit methods. In addition, if the tolerance of the implicit root solver is set too large, one risks introducing substantial error into the solution and breaking the symmetry of the numerical scheme. Similarly to the case of Runge--Kutta schemes, it was shown in \cite{butcher2016symmetric} that symmetric parasitism-free general linear methods cannot be explicit.\par\medskip

Several approaches to the near-preservation of group symmetries have been proposed in the literature. A general recursive construction of integrators that are approximately invariant under a given symmetry is presented in \cite{iserles1999approximately}: given a desired symmetry $\Psi = \mathcal{A}\Psi$ where $\mathcal{A}$ is an automorphism, the proposed integrator is a composition of a chosen underlying integrator $\Phi$ with its \say{adjoint} $\mathcal{A}\Phi$, where the alternating pattern is governed by a Thue--Morse sequence. While this framework applies to a broad class of symmetries, the resulting integrators are trivially symmetric when $\mathcal{A}$ is an antiautomorphism, so the method cannot be applied to time-symmetry. Composition methods achieving near-preservation of time-symmetry have been discussed in \cite{casas2021compositions} in the context of double-jump compositions with complex-valued step sizes. There, the authors increase the order of existing symmetric methods via composition whilst maintaining approximate symmetry; following \cite{chartier1998reversible} and \cite{aubry1998pseudo}, they refer to such methods as \say{pseudo-symmetric}. Crucially, these prior approaches rely on composing existing symmetric schemes rather than constructing fundamentally new ones. In particular, when applied to time-symmetric Runge--Kutta methods, they cannot yield explicit schemes.\par\medskip

Many numerical integration methods on Euclidean spaces can be systematically studied through the formalism of B-series, introduced by John Butcher \cite{Hairer-etal2006,McLachlanetal2017,Butcher2021}. A B-series is an infinite expansion indexed by non-planar rooted trees, where each term is determined by a tree function $\psi$ and a vector field $f$. It encodes the action of a numerical integrator as a time-step map approximating the flow of a differential equation.

A key feature of B-series is that their composition yields another B-series. This closure under composition defines a natural group structure on the set of tree functions, reflecting the composition of the corresponding integration methods. Algebraically, this structure is dual to the coproduct of the Butcher--Connes--Kreimer Hopf algebra, a combinatorial Hopf algebra defined on forests of non-planar rooted trees \cite{connes1999hopf} defined in terms of admissible edge cuts on trees. In this framework, tree functions appearing in a B-series are precisely the Hopf algebra characters of $\mathcal{H}$.

Properties of numerical integrators can thus be studied in terms of rooted trees and the associated tree map $\psi$. Combinatorial arguments on trees determine the structure of the B-series required for the method to have specific structural properties. For instance, classical symmetry conditions on the Butcher tableau of a Runge--Kutta scheme correspond to constraints on so-called edge cuts of rooted trees, which therefore directly link to the coproduct in $\mathcal{H}$. Consequently, symmetry properties of general B-series methods can be formulated in terms of the underlying Hopf algebraic structure.

\smallskip

We show that every B-series method admits a canonical factorisation into the composition of two components, which we refer to as the symmetric and antisymmetric parts. This factorisation is rooted in the odd-even decomposition of Hopf algebra characters, as introduced by Aguiar, Bergeron, and Sottile in their study of combinatorial Hopf algebras \cite{aguiar2006combinatorial}. Specifically, any character $\psi$ on a graded connected Hopf algebra such as the Butcher--Connes--Kreimer Hopf algebra $\mathcal{H}$ can be uniquely decomposed as
$\psi = \psi^{\text{even}} * \psi^{\text{odd}}$, where $\psi^{\text{even}}$ and $\psi^{\text{odd}}$ are characters satisfying certain parity constraints under the grading of the Hopf algebra, and $*$ denotes the convolution (or Butcher) product. In the context of B-series, this decomposition translates into a composition of two B-series methods -- one symmetric and one antisymmetric -- whose individual properties can be studied and controlled separately. We note that the odd-even decomposition at the level of characters has a combinatorial analog at the level of rooted trees, which turns out to be computationally useful.  

\smallskip

This structure-theoretic insight motivates the design and analysis of near-symmetric numerical schemes, wherein the antisymmetric component $\psi^{\text{even}}$ is either identically trivial or sufficiently small to ensure that the integrator retains desirable properties such as time-reversibility. Importantly, the symmetric component $\psi^{\text{odd}}$ often inherits or approximates these properties even when the full method does not. 

As an example of near-symmetric schemes, we introduce the class of so-called Explicit and Effectively Symmetric (EES) Runge--Kutta schemes, which benefit from being explicit whilst keeping the antisymmetric component small. The derivation of order conditions for these EES schemes is a significant task, which requires the symmetric decomposition of hundreds of rooted trees. To aid in this computation, we introduce the Kauri Python package \cite{shmelev2025kauri} for the algebraic manipulation of rooted trees. 

\medskip

The paper is organised into eight sections. Section \ref{sec:HA} briefly reviews the Hopf algebra of non-planar rooted trees, commonly known as the Butcher--Connes--Kreimer Hopf algebra. Section \ref{sec:methods} provides a short overview of B-series methods, with a particular focus on Runge--Kutta schemes. Section \ref{sec:adjointmethod} explores the concept of adjoint schemes through the lens of rooted trees and characters of the Butcher--Connes--Kreimer Hopf algebra. Section \ref{sec:evenodd} presents the general odd-even decomposition of characters over a combinatorial Hopf algebra due to Aguiar, Bergeron, and Sottile, and sets the stage for our approach to finding approximately symmetric methods. Section \ref{sec:rationalpowers} discusses square roots, or more generally rational powers, of the identity map on rooted trees, and how these can be used to recursively compute the odd-even decomposition of characters over the Butcher--Connes--Kreimer Hopf algebra. Section \ref{sec:equivalence} introduces a new concept of equivalence of Runge--Kutta schemes based on their symmetric component, and discusses properties of the resulting equivalence classes of schemes. Section \ref{sec:explicitschemes} is central to this work. It introduces the novel concept of Explicit and Effectively Symmetric (EES) schemes by exploiting the odd-even decomposition of Runge--Kutta schemes at the level of Hopf algebra characters. This is achieved by imposing additional order conditions which minimise the antisymmetric component, resulting in schemes that are both explicit and nearly symmetric. We explore the quality of EES schemes by comparison with other schemes. Section \ref{sec:conclusions} closes this work with a conclusion outlining applications and interesting questions. Appendices A and B study stability aspects of EES schemes and the symmetric components of classical methods.

\section*{Acknowledgments}
NT acknowledges funding by the Deutsche Forschungsgemeinschaft (DFG, German Research Foundation) – CRC/TRR 388 "Rough Analysis, Stochastic Dynamics and Related Fields" – Project ID 516748464.

%%%%%%%%%%%%%%%%%%%%%%%%%%%%%%%%%%%%%%%%%%%%%
\section{The Hopf Algebra of Non-Planar Rooted Trees}
\label{sec:HA}
%%%%%%%%%%%%%%%%%%%%%%%%%%%%%%%%%%%%%%%%%%%%%

A non-planar rooted tree is defined to be a tree $\tau = (V,E,r)$ with vertex set $V$ and edge set $E$, together with a distinguished vertex $r \in V$ called the root. Two trees $\tau = (V, E, r)$ and $\tau' = (V', E', r')$ are identified if there exists a bijective map $f : V \to V'$ such that $(e_1, e_2) \in E \Leftrightarrow (f(e_1), f(e_2)) \in E'$. As such, the trees we consider are \textit{unlabelled}. We will denote by $\mathcal{T}$ the set of such unlabelled non-planar rooted trees, to be understood as equivalence classes of labelled trees, including the empty tree which is denoted $\emptyset$. Given trees $\tau_1, \ldots, \tau_k \in \mathcal{T}$, we will write $[\tau_1, \ldots, \tau_k]$ to mean the tree formed by joining the root vertices of $\tau_1, \ldots, \tau_k$ to a new root vertex,
\begin{equation}
\label{treebracket}
    [\tau_1, \ldots, \tau_k] = \commonRootTree.
\end{equation} 
Recall that non-planarity means that the order of the trees in $[\tau_1, \ldots, \tau_k]$ is irrelevant. For example, we do not distinguish between the trees
\begin{equation*}
   [\,\RS{n}, \RS{i} \,] = \RS{nlRi}
    \quad \text{and} \quad 
    [\,\RS{i}, \RS{n} \,] = \RS{nlrLi}.
\end{equation*}
When the expression contains repeated trees, we will write these in power notation, for example,
\begin{equation*}
    [\tau_1,\tau_1, \tau_2, \tau_3, \tau_3, \tau_3] = [\tau_1^2, \tau_2, \tau_3^3].
\end{equation*}
We will write $|\tau|$ to denote the number of nodes in the tree, i.e., the cardinality $|V|$. Permutations of the set of vertex labels $V$ naturally give rise to a group of symmetries. We will denote the order of this group by $\sigma(\tau)$. Alternatively, $\sigma(\tau)$ can be defined recursively by
\begin{align*}
    &\sigma(\emptyset) = 1, \quad \sigma(\RS{n}) = 1,\\
    &\sigma([\tau_1^{k_1}, \ldots, \tau_m^{k_m}]) = \prod_{i=1}^m k_i! \sigma(\tau_i)^{k_i}.
\end{align*}
Similarly, the tree factorial is defined recursively by
\begin{align*}
    &\emptyset! = 1, \quad \RS{n} ! = 1, \\
    &[\tau_1, \ldots, \tau_n]! = |[\tau_1, \ldots, \tau_n]| \cdot \tau_1 ! \cdots \tau_n !.
\end{align*}
We will refer to the commutative juxtaposition of trees as a forest. For example,
\begin{equation*}
    \RS{nlr} \hspace{2mm} \RS{n} \hspace{2mm} \RS{nliRi}
\end{equation*}
is a forest obtained by juxtaposing three trees. We note that in reference \cite{connes1999hopf}, another notation is used for the bracket operation \eqref{treebracket} mapping forests to trees, $B_+(\tau_1 \cdots \tau_k)=[\tau_1, \ldots, \tau_k]$. 

The free commutative $\mathbb{R}$-algebra generated by $\mathcal{T}$ will be denoted $\mathcal{H}$. We will adopt the Hopf algebra structure on $\mathcal{H}$ presented in the work by Connes and Kreimer \cite{connes1999hopf}, commonly referred to as the Butcher--Connes--Kreimer Hopf algebra of rooted trees, which is defined as follows. We refer the reader to \cite{CarPat2021} for a detailed introduction to (combinatorial) Hopf algebras and their applications.

\medskip
\begin{itemize}

    \item Multiplication $\mu : \mathcal{H} \otimes \mathcal{H} \to \mathcal{H}$ is defined as the commutative juxtaposition of two forests, extended linearly to $\mathcal{H}$.\medskip
    
    \item The multiplicative unit is defined to be the empty forest $\emptyset$.\medskip
    
    \item The counit map $\varepsilon: \mathcal{H} \to \mathbb{R}$ is defined by $\varepsilon(\emptyset) = 1$ and $\varepsilon(\tau) = 0$ for all non-empty trees $\tau \in \mathcal{H}$.\medskip
    
    \item Comultiplication $\Delta : \mathcal{H} \to \mathcal{H} \otimes \mathcal{H}$ is defined as follows. A so-called cut of a tree $\tau$ is either a subset of edges $c \subset E$, or a special cut called the \say{total cut}, to be defined shortly. The cut containing no edges will be referred to as the \say{empty cut}. For $c \subset E$, consider the forest $\tau^c = (V, E\setminus c)$ formed by removing the edges in $c$. Denote by $R_c(\tau)$ the tree of $\tau^c$ which contains the original root of $\tau$, and by $P_c(\tau)$ the forest formed by the juxtaposition of the remaining connected components. The \say{total cut} is defined such that $R_c(\tau) = \emptyset$ and $P_c(\tau) = \tau$. Following \cite{foissy2013introduction}, we say a cut is \say{admissible} if any oriented path through the tree meets at most one edge in the cut. We set $\text{Adm}^*(\tau)$ to be the set of admissible cuts, and $\text{Adm}(\tau) = \text{Adm}^*(\tau) \setminus \{\text{\say{empty cut}, \say{total cut}}\}$. Then for $\tau \in \mathcal{T}$ we define
    \begin{equation*}
        \Delta(\tau) := \sum_{c \in \text{Adm}^*(\tau)} P_c(\tau) \otimes R_c(\tau) = \tau \otimes \emptyset + \emptyset \otimes \tau + \sum_{c \in \text{Adm}(\tau)} P_c(\tau) \otimes R_c(\tau).
    \end{equation*}
    Comultiplication is then extended inductively to forests by defining $\Delta(\tau_1 \tau_2) = \Delta(\tau_1)\Delta(\tau_2)$ for all forests $\tau_1, \tau_2$, and extended linearly to $\mathcal{H}$. It will be convenient to additionally define the reduced coproduct
    \begin{equation*}
        \delta(\tau) := \sum_{c \in \text{Adm}(\tau)} P_c(\tau) \otimes R_c(\tau)
    \end{equation*}
    such that $\Delta(\tau) = \tau \otimes \emptyset + \emptyset \otimes \tau + \delta(\tau)$. For example,
    \begin{align*}
    \Delta\left(\RS{lr}\right) &= \RS{lr} \otimes \emptyset 
    + \emptyset \otimes \RS{lr} 
    + 2\, \RS{n} \otimes \RS{i} 
    + \RS{n}\, \RS{n} \otimes \RS{n},\\
    \delta\left(\RS{lr}\right) &= 2\, \RS{n} \otimes \RS{i} 
    + \RS{n}\, \RS{n} \otimes \RS{n}.
    \end{align*}
    
    \item There are several equivalent ways of defining the antipode $S$ of $\mathcal{H}$. For our purposes, we will define
    \begin{equation}
    \label{antipode}
        S(\tau) := \sum_{c \text{ a cut of } \tau} (-1)^{|c| + 1} \tau^c,
    \end{equation}
    known as the forest formula, which can be shown to be equivalent to the more common recursive definition (see for instance \cite[Theorem 2]{foissy2013introduction}). We will occasionally write $S\tau$ in place of $S(\tau)$ when the argument of $S$ is clear. For example,
    \begin{equation*}
        S\, \RS{n} = -\, \RS{n},
        \qquad
        S\, \RS{i} = -\, \RS{i} + \, \RS{n}\, \RS{n},
        \qquad
        S\,\RS{lr}=
        - \RS{lr} + 2\, \RS{n}\, \RS{i} 
        - \RS{n}\, \RS{n}\, \RS{n}\,.
    \end{equation*}
\end{itemize}

The resulting Hopf algebra $(\mathcal{H}, \Delta,\mu,\varepsilon, \emptyset, S)$ is commutative, non-cocommutative, graded by the total number of vertices in a forest, i.e., $\mathcal{H} = \bigoplus_{n \geq 0} \mathcal{H}_n $, and connected, $\mathcal{H}_0=\mathbb{R}\emptyset$. See \cite{hoffman2003hopf} for a comprehensive presentation.

%%%%%%%%%%%%%%%%%%%%%%%%%%%%%%%%%%%%%%%%%%%%%
\section{B-Series and Runge--Kutta Methods}
\label{sec:methods}
%%%%%%%%%%%%%%%%%%%%%%%%%%%%%%%%%%%%%%%%%%%%%

Throughout, we will consider a smooth vector field $f: \mathbb{R}^{{N}} \to \mathbb{R}^{{N}}$ and the corresponding ordinary differential equation (ODE) 
\begin{equation}
\label{ODE}
 \frac{dy}{dt} = f(y).   
\end{equation}
We briefly recall Butcher's B-series expansion associated with the ODE \eqref{ODE}. For any tree $\tau \in \mathcal{T}$, the so-called elementary differential $F(\tau)(y)$ \cite[Definition 310A]{butcher2016numerical} is defined recursively by
\begin{align*}
    &F(\emptyset)(y) = y, \quad F(\RS{n})(y) = f(y),\\
    &F([\tau_1, \tau_2, \ldots, \tau_m])(y) = f^{(m)}(y)(F(\tau_1)(y), F(\tau_2)(y), \ldots, F(\tau_m)(y)).
\end{align*}
For example, choosing coordinates to express the vector field $f=\sum_{j=1}^{{N}} f^j{\partial_j}$ on $\mathbb{R}^{{N}}$, we find 
\begin{equation*}
    F([\RS{n}\, \RS{n}])=F\left(\RS{lr}\right)=f^{(2)}(f,f)= \sum^N_{j,k,m=1} f^kf^m(\partial_k\partial_mf^j) \partial_j.
\end{equation*}

Given a map $\varphi : \mathcal{T} \to \mathbb{R}$, we define the associated B-series as 
\begin{equation*}
    B_h(\varphi, y_0) := \sum_{\tau \in \mathcal{T}}  \frac{h^{|\tau|}}{\sigma(\tau)} \varphi(\tau) F(\tau)(y_0).
\end{equation*}
The Butcher group $G$ is defined to be the set of algebra homomorphisms $\varphi : \mathcal{H} \to \mathbb{R}$, that is, $\varphi(\emptyset) = 1$ and $\varphi(\tau_1\tau_2) = \varphi(\tau_1)\varphi(\tau_2)$, with the group structure defined in terms of the convolution product
\begin{equation*}
    \varphi_1 * \varphi_2 = \mu_\mathbb{R} \circ (\varphi_1 \otimes \varphi_2) \circ \Delta,
\end{equation*}
with $\mu_\mathbb{R} : \mathbb{R} \otimes \mathbb{R} \to \mathbb{R}$ denoting multiplication in $\mathbb{R}$. For example,
\begin{equation*}
    \varphi_1 * \varphi_2\left(\RS{nlr}\right)
    = \varphi_1\left(\RS{nlr}\right) + \varphi_2\left(\RS{nlr}\right)
    + 2\, \varphi_1 (\RS{n}) \,\varphi_2\left(\,\RS{ni}\,\right)
    + \varphi_1 (\RS{n})\,\varphi_1 (\RS{n}) \,\varphi_2(\RS{n})\,.
\end{equation*}
Unless stated otherwise, this product will always be implied by writing $\varphi_1 \varphi_2$, with the power notation $\varphi^n = \varphi \varphi \cdots \varphi$ denoting repeated application of the product. The inverse in the Butcher group is given by composition with the antipode \eqref{antipode}, $\varphi^{-1} = \varphi \circ S$. The identity in $G$ is given by the counit $\varepsilon$. Equipped with this product, one can show that for two B-series \cite{hairer1974butcher, butcher2024b},
\begin{equation*}
    B_h(\varphi_2,B_h(\varphi_1,y_0)) = B_h(\varphi_1\varphi_2,y_0).
\end{equation*}

That is, the group product in $G$ captures exactly the composition of B-series. The B-series expansion of the exact solution to the ODE \eqref{ODE} is given by
\begin{align} 
\label{eq:exact_butcher_series}
    y(s) &= y_0 + \sum_{\tau \in \mathcal{T}\setminus\{\emptyset\}} \frac{s^{|\tau|}}{\sigma(\tau)} \frac{1}{\tau! } F(\tau)(y_0)\\
    &= B_s\left(a, y_0\right),
\end{align}
where the tree map is given in terms of the inverse tree factorial $a(\tau) = 1/\tau!$.

\begin{definition}
A B-series method is any formal power series $B_h(\varphi, f)$, where $\varphi \in G$. We call $\varphi$ the corresponding \say{elementary weights function} or simply the corresponding \say{character} of the B-series method.
\end{definition}

Of particular interest to us is a specific subclass of B-series methods: the Runge–Kutta methods. Consider solving the original ODE \eqref{ODE} numerically using a one-step method 
\begin{equation}
\label{1-stepmethod}
    y_{n+1} = y_n + h \Psi(y_n, h).
\end{equation}
Then \eqref{1-stepmethod} is an $s$-stage Runge--Kutta (RK) method if $\Psi$ can be written in the form
\begin{align*}
    y_{n+1} &= y_n + h \sum_{i=1}^s b_i k_i,\\
    k_i &= f\left(y_n + h\sum_{j=1}^{s} a_{ij} k_j\right), \quad i=1,\ldots,s,
\end{align*}
with RK matrix $A = (a_{ij})_{1 \leq i,j \leq s}$, weights $b = (b_i)_{1 \leq i \leq s}$ and nodes $c = (c_i)_{1 \leq i \leq s}$.
We will make the standard consistency assumption
\begin{equation}
    \sum_{i=1}^s b_i = 1 \label{eq:RK_assumption1}
\end{equation}
for all of the schemes we consider, and define $c_i := \sum_{j=1}^s a_{ij}$. 

\begin{definition}
\label{def:adjointmethod}
For the one-step method \eqref{1-stepmethod}, the \say{adjoint method} is defined by 
$$
    \Psi^*_h := \Psi^{-1}_{-h}.
$$
The scheme is said to be \say{symmetric}, or \say{reversible}, if $\Psi = \Psi^*$.
\end{definition}

\begin{theorem}[{\cite[Theorem 8.3]{hairer1993nonstiff}}] \label{thm:RK_symmetric}
Let $\Psi$ be an $s$-stage Runge--Kutta method. Then the adjoint method is also an $s$-stage Runge--Kutta method with coefficients
    \begin{align}
    \label{adjRKmethod}
    \begin{aligned}
    a_{ij}^* &= b_{s+1-j} - a_{s+1-i, s+1-j}\\
    b_j^* &= b_{s+1-j}.  
    \end{aligned}
    \end{align}
\end{theorem}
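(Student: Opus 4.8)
The plan is to compute $\Psi^*_h = \Psi^{-1}_{-h}$ directly from the defining Runge--Kutta equations and read off the resulting Butcher tableau. First I would write out the method $\Psi_{-h}$ by substituting $-h$ for $h$ throughout the stage and update equations, obtaining
\begin{align*}
y_{n+1} &= y_n - h\sum_{i=1}^s b_i k_i, & k_i &= f\Bigl(y_n - h\sum_{j=1}^s a_{ij}k_j\Bigr).
\end{align*}
For small $h$ the update map is a near-identity perturbation of $y_n \mapsto y_n$, so by the implicit function theorem it is a local diffeomorphism and the inverse $\Psi^{-1}_{-h}$ is well defined.

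The core step is inverting this map. Writing $u$ for the input of $\Psi^*_h$ and $v = \Psi^*_h(u)$ for its output, the relation $v = \Psi^{-1}_{-h}(u)$ is equivalent to $\Psi_{-h}(v) = u$, that is,
\begin{equation*}
u = v - h\sum_{i=1}^s b_i k_i, \qquad k_i = f\Bigl(v - h\sum_{j=1}^s a_{ij}k_j\Bigr).
\end{equation*}
Solving the first relation for $v = u + h\sum_i b_i k_i$ and substituting into the stage equations to eliminate $v$ yields
\begin{equation*}
v = u + h\sum_{i=1}^s b_i k_i, \qquad k_i = f\Bigl(u + h\sum_{j=1}^s (b_j - a_{ij})k_j\Bigr).
\end{equation*}
The crucial observation is that the same internal stages $k_i$ now serve as the stages of a genuine Runge--Kutta method acting on the input $u$, with matrix $\hat a_{ij} = b_j - a_{ij}$ and weights $\hat b_i = b_i$. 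This already establishes that $\Psi^*_h$ is an $s$-stage Runge--Kutta method.

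To match the stated coefficients I would then relabel the stages. A Runge--Kutta method is invariant under any simultaneous permutation $\pi$ of its stage indices, since replacing $a_{ij} \mapsto a_{\pi(i)\pi(j)}$ and $b_i \mapsto b_{\pi(i)}$ merely renames the $k_i$ and leaves the output unchanged. Applying the reversal $\pi(i) = s+1-i$ to $(\hat a, \hat b)$ therefore produces an equivalent tableau with
\begin{equation*}
a^*_{ij} = \hat a_{s+1-i,\,s+1-j} = b_{s+1-j} - a_{s+1-i,\,s+1-j}, \qquad b^*_j = \hat b_{s+1-j} = b_{s+1-j},
\end{equation*}
which is exactly \eqref{adjRKmethod}.

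I expect the main obstacle to be the careful bookkeeping in the substitution step, and in particular verifying that the stage values $k_i$ can be reused unchanged so that the inverted system is \emph{literally} in Runge--Kutta form rather than merely resembling one. The remaining points, namely the invertibility of $\Psi_{-h}$ needed for the adjoint to exist and the legitimacy of the stage relabeling, are standard and would be dispatched briefly.
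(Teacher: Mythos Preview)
Your proof is correct and is essentially the standard argument from the cited reference \cite[Theorem 8.3]{hairer1993nonstiff}; the paper itself does not supply a proof but merely quotes the result. Your observation that the index reversal $i\mapsto s+1-i$ is an optional relabeling is exactly the content of the paper's subsequent Remark~\ref{rmk:index_order}.
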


\begin{remark} 
\label{rmk:index_order}
The permutation of indices $i \mapsto s + 1 - i$ in \eqref{adjRKmethod} is purely to preserve the order of $c_1, \ldots, c_s$, and is not strictly necessary.
\end{remark}

There is a simple, canonical way in which one may construct a symmetric scheme, which is to compose an existing scheme with its adjoint.

\begin{proposition*}
\label{prop:composition}
Let $\Psi$ be a numerical scheme. Then the methods
    \begin{align*}
        \Psi'_h &= \Psi^*_{h/2} \circ \Psi_{h/2}\\
        \Psi''_h &= \Psi_{h/2} \circ \Psi^*_{h/2}
    \end{align*}
are symmetric.
\end{proposition*}

\begin{theorem}[{\cite[Theorem 8.8]{hairer1993nonstiff}}]
The Runge--Kutta method $\Psi$ is symmetric if
    \begin{equation*}
        a_{s+1-i, s+1-j} + a_{ij} = b_{s+1-j} = b_j, \quad i,j = 1,\ldots, s.
    \end{equation*}
Moreover, if the $b_i$ are non-zero and the $c_i$ distinct and ordered as $c_1 < c_2 < \cdots < c_s$, then the condition is also necessary.
\end{theorem}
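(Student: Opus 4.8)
The plan is to reduce everything to the explicit description of the adjoint tableau in Theorem~\ref{thm:RK_symmetric}, and then to invoke the rigidity of the B-series character under the stated non-degeneracy hypotheses. Sufficiency is essentially immediate: assuming $a_{s+1-i,s+1-j}+a_{ij}=b_{s+1-j}=b_j$ for all $i,j$, I would rearrange the first equality as $a_{ij}=b_{s+1-j}-a_{s+1-i,s+1-j}$. Comparing with \eqref{adjRKmethod} shows $a_{ij}=a^*_{ij}$, while the second equality gives $b_j=b_{s+1-j}=b^*_j$. Thus the adjoint method has exactly the same Butcher tableau as $\Psi$, so $\Psi^*=\Psi$ and the scheme is symmetric by Definition~\ref{def:adjointmethod}.

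For necessity, the first step is to pass from equality of maps to equality of B-series characters: if $\Psi=\Psi^*$ for every smooth vector field $f$, then their B-series coincide, and since the elementary differentials $F(\tau)$ are linearly independent, the associated characters agree, $\varphi=\varphi^*$. The key structural point is then a uniqueness (irreducibility) statement: under the hypotheses $b_i\neq 0$ and $c_1<\cdots<c_s$ distinct, the tableau $(A,b)$ can be recovered from $\varphi$ alone. Concretely, I would first evaluate $\varphi$ on the bushy trees $[\bullet^k]$, obtaining the weighted power sums $\sum_i b_i c_i^{\,k}$; since the $c_i$ are distinct and the $b_i$ nonzero, the generating function $\sum_i b_i/(1-c_i z)$ has simple poles at $1/c_i$ with residues fixing the $b_i$, so the pairs $(b_i,c_i)$ are determined up to the labelling pinned down by the ordering of $c$. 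I would then evaluate $\varphi$ on the trees $[\bullet^r,[\bullet^l]]$, giving $M_{rl}=\sum_{i,j} b_i c_i^{\,r} a_{ij} c_j^{\,l}=(VBAV^{\top})_{rl}$, where $V$ is the Vandermonde matrix $V_{ri}=c_i^{\,r}$ and $B=\diag(b_i)$; invertibility of $V$ (distinct $c_i$) and of $B$ (nonzero $b_i$) then recovers $A=B^{-1}V^{-1}MV^{-\top}$.

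Finally I would apply this rigidity to both $\Psi$ and $\Psi^*$. A direct computation from \eqref{adjRKmethod} gives $c^*_i=\sum_j a^*_{ij}=1-c_{s+1-i}$, which is again strictly increasing, so $\Psi^*$ satisfies the same non-degeneracy and ordering hypotheses as $\Psi$. Hence the recovery argument applies verbatim to both tableaux, and $\varphi=\varphi^*$ forces $a_{ij}=a^*_{ij}$ and $b_j=b^*_j$ for all $i,j$. Substituting \eqref{adjRKmethod} and rearranging returns the claimed conditions $a_{s+1-i,s+1-j}+a_{ij}=b_{s+1-j}=b_j$.

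The main obstacle is precisely the uniqueness/recovery step. One must justify carefully that the non-degeneracy hypotheses render the method irreducible, so that the Vandermonde inversion above is legitimate, and that the strict ordering of the $c_i$ eliminates the residual permutation ambiguity in identifying the stages. This is exactly where both hypotheses $b_i\neq 0$ and $c_1<\cdots<c_s$ are used, and it is what makes the stated condition \emph{necessary} rather than merely sufficient; without them a scheme can be symmetric while its tableau fails the displayed identities, since reducible or confluent tableaux are not determined by their character.
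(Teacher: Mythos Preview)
The paper does not prove this statement; it is quoted from \cite[Theorem 8.8]{hairer1993nonstiff} without argument, so there is no in-paper proof to compare against.

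Your proposal is nonetheless sound. Sufficiency is immediate from \eqref{adjRKmethod}, as you observe. For necessity, your recovery argument---reading off the pairs $(b_i,c_i)$ from the moments $\psi([\bullet^k])=\sum_i b_i c_i^{\,k}$ and then $A$ from $\psi([\bullet^r,[\bullet^l]])$ via the Vandermonde inversion $A=B^{-1}V^{-1}MV^{-\top}$---is the standard mechanism behind the equivalence theory of irreducible Runge--Kutta methods; the hypotheses $b_i\neq 0$ and $c_i$ distinct are precisely what make both inversions legitimate, while the ordering $c_1<\cdots<c_s$ removes the residual stage-permutation ambiguity. Your check that $\Psi^*$ inherits the same non-degeneracy (since $b_j^*=b_{s+1-j}\neq 0$ and $c_i^*=1-c_{s+1-i}$ is again strictly increasing) is what allows the rigidity to be applied symmetrically and closes the argument. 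One minor technical point: the generating-function description with simple poles at $1/c_i$ tacitly assumes all $c_i\neq 0$; if one $c_i$ vanishes the pole picture needs adjustment, but the underlying moment problem (via the Hankel/Vandermonde system) still determines the pairs $(b_i,c_i)$ uniquely, so the conclusion is unaffected.
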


\begin{definition}[{\cite[Definition 312A]{butcher2016numerical}}]
Consider an $s$-stage Runge--Kutta method. The \say{elementary weights} $\psi(\tau)$, the \say{internal weights} $\psi_i(\tau)$ and the \say{derivative weights} $(\psi_i D)(\tau)$ for any tree $\tau \in \mathcal{T}$ and $i = 1, \ldots, s$ are defined inductively by
    \begin{align*}
        (\psi_i D)(\RS{n}) &= 1, \\
        \psi_i(\tau) &= \sum_{j=1}^s a_{ij} (\psi_j D)(\tau),\\
        (\psi_i D)([\tau_1, \tau_2, \ldots, \tau_k]) &= \prod_{j=1}^k \psi_i(\tau_j),\\
        \psi(\tau) &= \sum_{i=1}^s b_i(\psi_i D)(\tau).
    \end{align*}
\end{definition}
    
We will extend these definitions to forests multiplicatively so that $\psi(\tau_1 \tau_2) = \psi(\tau_1)\psi(\tau_2)$.
    
\begin{lemma} [{\cite[Lemma 312B]{butcher2016numerical}}]
\label{lemma:elementary_weights}
    Let $\tau = (V,E)$ and $|\tau| = n$. Then
    \begin{align*}
        \psi(\tau) &= \sum_{i_1, \ldots, i_n} b_{i_1} \prod_{(k,\ell) \in E} a_{i_k, i_\ell}\\
        \psi_i(\tau) &=  \sum_{i_1, \ldots, i_n} a_{i, i_1} \prod_{(k,\ell) \in E} a_{i_k, i_\ell}.
    \end{align*}
\end{lemma}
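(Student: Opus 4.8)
The plan is to prove both displayed formulas simultaneously by strong induction on $n = |\tau|$, with the central object being an explicit expression for the derivative weights $(\psi_i D)(\tau)$; the stated formulas for $\psi_i(\tau)$ and $\psi(\tau)$ will then drop out immediately from their defining recursions. First I would fix, for each unlabelled tree, a labelling of its vertices by $1, \ldots, n$ with the root labelled $1$ and every edge oriented from parent to child, so that $(k,\ell) \in E$ means $k$ is the parent of $\ell$. Before starting the induction I would note that the right-hand sides are genuinely well-defined on equivalence classes: relabelling the non-root vertices merely permutes the summation indices $i_2, \ldots, i_n$ and leaves the sums invariant. The intermediate claim to be proved by induction is
\begin{equation*}
    (\psi_i D)(\tau) = \sum_{i_2, \ldots, i_n} \prod_{(k,\ell) \in E} a_{i_k, i_\ell}, \qquad i_1 := i,
\end{equation*}
i.e. the root index is frozen to $i$ while the remaining vertex indices are summed over $\{1, \ldots, s\}$.

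The base case $\tau = \RS{n}$ is immediate: there are no edges and no free indices, so both sides equal $1$. For the inductive step I would write $\tau = [\tau_1, \ldots, \tau_k]$, where each $\tau_j$ has strictly fewer than $n$ vertices, and apply the recursion $(\psi_i D)([\tau_1, \ldots, \tau_k]) = \prod_{j=1}^k \psi_i(\tau_j)$. Here the induction hypothesis (the derivative-weight formula for the smaller trees $\tau_j$), combined with $\psi_i(\tau_j) = \sum_m a_{im}(\psi_m D)(\tau_j)$, already yields the internal-weight formula $\psi_i(\tau_j) = \sum a_{i,\,\mathrm{root}(\tau_j)} \prod_{e \in E(\tau_j)} a_{\cdots}$ for each subtree, so no separate induction for the internal weights is needed.

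The combinatorial heart of the step is that the edge set of $\tau$ decomposes as the disjoint union of the edge sets of the $\tau_j$ together with the $k$ new edges joining the root of $\tau$ to the root of each $\tau_j$. Under the identification of the external index $i$ with the root index of $\tau$, the factor $a_{i,\,\mathrm{root}(\tau_j)}$ supplied by each internal weight is exactly the contribution of the $j$-th new edge. Since the index sets of distinct subtrees are disjoint, distributivity lets the product over $j$ of the subtree sums reassemble into a single sum over all non-root indices of $\tau$ of the product over all edges of $\tau$, which is precisely the claimed formula for $(\psi_i D)(\tau)$. With this in hand, the two stated formulas follow without further induction: substituting into $\psi_i(\tau) = \sum_{i_1} a_{i, i_1}(\psi_{i_1} D)(\tau)$ and into $\psi(\tau) = \sum_{i_1} b_{i_1}(\psi_{i_1} D)(\tau)$, and treating the previously frozen root index as a fresh summation variable, reproduces the displayed expressions exactly.

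I expect the main obstacle to be the bookkeeping in the inductive step: verifying cleanly that the product of the subtree internal weights reassembles into the single edge product over $\tau$. This hinges on two points that must be stated carefully rather than waved through — correctly matching each new root-to-subtree edge with the corresponding $a_{i,\,\mathrm{root}}$ factor, and invoking the disjointness of the subtree index sets to license the interchange of product and sum. Everything else is routine.
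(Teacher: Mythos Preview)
Your proposal is correct. The paper does not actually prove this lemma---it is quoted verbatim from Butcher's textbook with no accompanying argument---so there is no in-paper proof to compare against; your strong induction on $|\tau|$ via an explicit formula for $(\psi_i D)(\tau)$ is exactly the standard argument one finds in the cited reference.
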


The approximation $y_\Psi$ given by the RK scheme then admits the B-series expansion $y_\Psi = B_h(\psi, y_0)$.

\begin{notation}
Throughout, we will always denote RK schemes by upper-case Greek letters, and use the corresponding lower-case letter to denote their elementary weights function.
\end{notation}

\begin{definition}
A one-step method $y_{n+1} = y_n + h \Psi(y_n, h)$ is said to be of order $p$ if
    \begin{equation*}
        \lVert y(h) - y_1 \rVert \leq C h^p
    \end{equation*}
for some constant $C$ independent of $h$.
\end{definition}

\begin{theorem} [{\cite[Theorem 315A]{butcher2016numerical}}]
\label{thm:order_p_weights}
A Runge--Kutta method with elementary weights $\psi : \mathcal{T} \to \mathbb{R}$ has order $p$ if and only if
    \begin{equation*}
        \psi(\tau) = \frac{1}{\tau!}, \quad \forall \tau \in \mathcal{T} \text{ s.t. } |\tau| \leq p.
    \end{equation*}
\end{theorem}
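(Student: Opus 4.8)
The plan is to read off the order conditions by comparing the one-step numerical solution $y_1 = B_h(\psi,y_0)$ with the exact solution, which by \eqref{eq:exact_butcher_series} is $y(h) = B_h(a,y_0)$ with $a(\tau) = 1/\tau!$, grouping both B-series by powers of $h$. First I would subtract the two expansions and collect terms of equal node number, obtaining
\begin{equation*}
	y(h) - y_1 = \sum_{n \ge 1} h^n \sum_{|\tau| = n} \frac{1}{\sigma(\tau)}\bigl(a(\tau) - \psi(\tau)\bigr)\, F(\tau)(y_0),
\end{equation*}
where the empty-tree contributions cancel because $a(\emptyset) = \psi(\emptyset) = 1$. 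Since $f$ is smooth, these are genuine asymptotic expansions in $h$ on a neighbourhood of $y_0$, so the size of $\lVert y(h) - y_1\rVert$ is governed by the lowest degree $n$ at which the inner sum fails to vanish.

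Sufficiency would then be immediate. If $\psi(\tau) = 1/\tau! = a(\tau)$ for every $\tau$ with $|\tau| \le p$, then every inner sum with $n \le p$ vanishes, so the leading surviving term is of order $h^{p+1}$; bounding the tail via the uniform boundedness of $f$ and its derivatives on a compact neighbourhood of $y_0$ gives $\lVert y(h) - y_1\rVert = O(h^{p+1})$, which is the defining local-error estimate for a method of order $p$.

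For necessity I would argue through the coefficients. A method of order $p$ forces the coefficient of $h^n$ in $y(h) - y_1$ to vanish for each $n \le p$, that is,
\begin{equation*}
	\sum_{|\tau| = n} \frac{1}{\sigma(\tau)}\bigl(a(\tau) - \psi(\tau)\bigr)\, F(\tau)(y_0) = 0
\end{equation*}
for every smooth $f$ and every base point $y_0$. The task is to upgrade this relation, which is a statement about a \emph{weighted sum} of elementary differentials, to the pointwise identities $a(\tau) = \psi(\tau)$ for each individual $\tau$ with $|\tau| \le p$.

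The hard part will be exactly this last step, which hinges on the linear independence of the elementary differentials $\{F(\tau) : |\tau| = n\}$. To establish it I would exhibit, for each fixed $n$, a dimension $N$, a vector field $f$, and a point $y_0$ for which the vectors $F(\tau)(y_0) \in \mathbb{R}^N$ are linearly independent. The cleanest route is to take $f$ polynomial and treat the Taylor coefficients of its components at $y_0$ as free indeterminates; the recursive definition of $F$ then shows that distinct rooted trees yield distinct contraction patterns of these indeterminates, so that no nontrivial linear combination of the $F(\tau)(y_0)$ can vanish identically. Granting this independence, each coefficient $a(\tau) - \psi(\tau)$ must vanish separately for $|\tau| \le p$, which yields $\psi(\tau) = 1/\tau!$ and closes the equivalence.
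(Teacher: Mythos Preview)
The paper does not prove this theorem; it is quoted verbatim from \cite[Theorem 315A]{butcher2016numerical} and used as a black box. Your sketch is the standard argument found in that reference (and in Hairer--N{\o}rsett--Wanner): subtract the two B-series, read off sufficiency from the vanishing of low-order coefficients, and obtain necessity from the linear independence of the elementary differentials $F(\tau)$. So there is nothing to compare against, and your approach is the expected one.

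One small point to tighten: in the necessity direction you assert that order $p$ forces the coefficient of $h^n$ to vanish for each $n\le p$. With the paper's stated definition of order ($\lVert y(h)-y_1\rVert \le Ch^p$), taken literally, only the coefficients with $n<p$ are forced to vanish. The standard convention (and Butcher's) is that order $p$ means local error $O(h^{p+1})$, which is what you implicitly use; just be aware of the discrepancy with the paper's phrasing. Also, your independence argument is only a sketch: the usual clean construction (e.g.\ Butcher's or \cite[Lemma II.2.9]{hairer1993nonstiff}) builds, for each tree $\tau$ of a given order, a specific vector field and base point for which $F(\tau)(y_0)\neq 0$ while $F(\tau')(y_0)=0$ for every other tree $\tau'$ of the same order, which is stronger and easier to carry out than a generic-indeterminate argument.
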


\begin{definition}
We say that a B-series method $B_h(\varphi, f)$ is of order $p$ if $\varphi(\tau) = 1/\tau!$ for all $|\tau| \leq p$.
\end{definition}

\begin{theorem} [{\cite[Theorem 317A]{butcher2016numerical}}]\label{thm:rk_density}
Given a finite subset $\mathcal{T}_0$ of $\mathcal{T}$ and a mapping $f : \mathcal{T}_0 \to \mathbb{R}$, there exists a Runge--Kutta method such that the elementary weights satisfy
    \begin{equation*}
        \psi(\tau) = f(\tau), \quad \forall \tau \in \mathcal{T}_0.
    \end{equation*}
\end{theorem}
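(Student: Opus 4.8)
The plan is to establish a surjectivity statement by induction on the maximal order $p=\max_{\tau\in\mathcal T_0}|\tau|$. First I would reduce to a convenient shape for the data: enlarge $\mathcal T_0$ to $\mathcal T_{\le p}:=\{\tau\in\mathcal T:1\le|\tau|\le p\}$ and extend $f$ to the new trees arbitrarily. Since $\mathcal T_0\subseteq\mathcal T_{\le p}$, any method realizing the enlarged data in particular realizes the original one, so it suffices to take $\mathcal T_0=\mathcal T_{\le p}$.

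The engine is a statement about the RK matrix $A$ alone (recall that the internal and derivative weights $\psi_i(\tau)$, $(\psi_iD)(\tau)$ depend only on $A$, not on the weights $b$): call it $Q(n)$ --- \emph{there exists an RK matrix $A$, of some size $s$, whose derivative-weight columns $D_i:=\big((\psi_iD)(\tau)\big)_{\tau\in\mathcal T_{\le n}}$ span $\mathbb R^{\mathcal T_{\le n}}$.} Granting $Q(p)$, the theorem follows immediately: from $\psi(\tau)=\sum_{i}b_i\,(\psi_iD)(\tau)$ the map $b\mapsto(\psi(\tau))_{\tau\in\mathcal T_{\le p}}$ is linear with columns exactly the $D_i$, hence surjective, and one solves the resulting linear system for $b$ to obtain a method with $\psi=f$ on $\mathcal T_{\le p}$.

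It remains to prove $Q(n)$ by induction; $Q(1)$ is immediate because $(\psi_iD)(\bullet)=1$ for any single stage. For the step I would use an \emph{append} construction. Starting from a matrix $A'$ witnessing $Q(n-1)$, adjoin new stages that take inputs only from the $A'$-block and feed into nothing, so the enlarged matrix is block lower triangular with a zero diagonal block on the new coordinates. Because appended stages are never used as inputs, the old stages' weights are untouched, and an appended stage with incoming weights $\alpha$ has internal weights $\psi_*(\sigma)=\sum_{j}\alpha_j(\psi_jD)(\sigma)$; by $Q(n-1)$ the base columns span $\mathbb R^{\mathcal T_{\le n-1}}$, so $\alpha$ may be chosen to realize \emph{any} prescribed internal-weight vector $x=(x_\rho)_{\rho\in\mathcal T_{\le n-1}}$. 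For such a stage and a tree $\tau=[\rho_1,\dots,\rho_k]$ of order $\ge 2$ the derivative weight is the pure monomial $(\psi_*D)(\tau)=\prod_{i}x_{\rho_i}$, while $(\psi_*D)(\bullet)=1$.

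The crux, and the step I expect to be the main obstacle, is to turn this into a spanning statement at level $n$. The key observation is that $\tau\mapsto(\psi_*D)(\tau)$ assigns to the trees of $\mathcal T_{\le n}$ pairwise distinct monomials in the variables $(x_\rho)_{\rho\in\mathcal T_{\le n-1}}$: a tree of order $\ge 2$ is determined by its multiset of branches, so different trees give different exponent vectors, and the single node gives the constant $1$. Distinct monomials are linearly independent as functions of $x$, so the corresponding generalized Vandermonde determinant is a nonzero polynomial; choosing $|\mathcal T_{\le n}|$ generic points $x^{(1)},\dots,x^{(K)}$ --- each attainable as the internal-weight vector of an appended stage, precisely by $Q(n-1)$ --- makes the matrix $\big[(\psi_*^{(k)}D)(\tau)\big]_{\tau,k}$ invertible. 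The associated appended stages then have spanning derivative-weight columns, so adjoining them to $A'$ witnesses $Q(n)$. The delicate points are the bookkeeping of the block-triangular append (so that old weights are genuinely untouched and appended stages do not interfere with one another) and the multivariate Vandermonde argument, including the verification that the needed evaluation points are realizable --- which is exactly what the inductive hypothesis supplies.
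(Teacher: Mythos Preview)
Your argument is correct and is essentially the inductive construction given in Butcher's book, which is precisely what the paper cites in lieu of its own proof. Your block-triangular append, the surjectivity via the inductive spanning of derivative-weight columns, and the Vandermonde step using that distinct trees yield distinct monomials in the internal-weight variables are all the ingredients of that proof; moreover, since your appended stages have a zero diagonal block and you may take the base case explicit, your construction in fact produces an explicit method, in line with the paper's Remark~\ref{rmk:explicit_rk_density}.
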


\begin{remark}\label{rmk:explicit_rk_density}
    We note that the same proof as the one presented in \cite{butcher2016numerical} shows that Theorem \ref{thm:rk_density} holds if one restricts the space of Runge--Kutta methods to only include implicit methods, or similarly only explicit methods. That is, given the map $f$ which we aim to match in Theorem \ref{thm:rk_density}, we are free to construct our Runge--Kutta method to be either implicit or explicit. This exposes two deeper facts about these two types of schemes. Firstly, it must be the case that explicit schemes are \say{dense} in Runge--Kutta schemes, in the sense that they can be used to approximate any Runge--Kutta scheme up to arbitrary order. This fact will underpin the construction of EES schemes in Section \ref{sec:explicitschemes}. Secondly, it is clear that one cannot verify whether a scheme is implicit or explicit knowing only finitely many values of $\psi(\tau)$, as both types of scheme may be constructed to match these values.
\end{remark}

%%%%%%%%%%%%%%%%%%%%%%%%%%%%%%%%%%%%%%%%%%%%%
\section{The Hopf Algebraic Structure of The Adjoint Method}
\label{sec:adjointmethod}
%%%%%%%%%%%%%%%%%%%%%%%%%%%%%%%%%%%%%%%%%%%%%

We are interested in studying the structure of symmetric schemes from a Hopf algebraic point of view. As such, a natural starting point is to understand the construction of the adjoint scheme at the level of trees and elementary weights, or characters. The following theorem is a simple corollary of the composition for B-series, but allows us to translate the idea of an adjoint method to that of an adjoint character.

\begin{definition}
The canonical involution on $\mathcal{H}$ is defined for $\tau \in \mathcal{H}$ by 
    \begin{equation}
    \label{invol}
        \bar\tau:=(-1)^{|\tau|}\tau.
    \end{equation}
\end{definition}

The involution \eqref{invol} induces an involution of characters over the Hopf algebra $\mathcal{H}$, given by 
$$
    \bar\zeta(\tau) := (-1)^{|\tau|}\zeta(\tau)
$$ 
for a character $\zeta$. It is easy to verify that the involution is multiplicative. That is, for any two characters $\zeta$ and $\xi$, we have $\overline{\zeta \xi} = \bar{\zeta} \bar{\xi}$ and $\overline{\zeta^{-1}} = (\bar{\zeta})^{-1}$.

\begin{theorem} 
\label{thm:psi_adj_weights}
Let $\psi, \psi^*$ be the elementary weight functions for the B-series methods $\Psi$ and $\Psi^*$ respectively, where $\Psi^*$ is the adjoint method of $\Psi$. Then
    \begin{equation}
    \label{eq:psi_adj}
        \psi^*(\tau) = (-1)^{|\tau|}\psi(S \tau).
    \end{equation}
\end{theorem}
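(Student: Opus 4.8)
The plan is to factor the adjoint operation $\Psi_h \mapsto \Psi_h^* = \Psi_{-h}^{-1}$ into two elementary operations at the level of characters: negation of the step size, and inversion in the Butcher group. Each of these admits a transparent description, and composing them yields the claimed formula. Throughout I identify a numerical map with its B-series character, using the composition law $B_h(\varphi_2, B_h(\varphi_1, y_0)) = B_h(\varphi_1\varphi_2, y_0)$ together with the facts that the identity map corresponds to the counit $\varepsilon$ and that the group inverse is $\varphi^{-1} = \varphi \circ S$.

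First I would handle the step-size negation. Reading off the defining series $B_h(\psi, y_0) = \sum_{\tau} \frac{h^{|\tau|}}{\sigma(\tau)}\psi(\tau)F(\tau)(y_0)$ and replacing $h$ by $-h$, the factor $(-h)^{|\tau|} = (-1)^{|\tau|}h^{|\tau|}$ can be absorbed into the character, giving
\[
    B_{-h}(\psi, y_0) = B_h(\bar\psi, y_0),
\]
where $\bar\psi$ is the canonical involution \eqref{invol} applied to $\psi$. Hence, expressed as a B-series in the variable $h$, the map $\Psi_{-h}$ carries the character $\bar\psi$.

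It remains to invert. Since $\Psi_h^* = \Psi_{-h}^{-1}$ is the functional inverse of $\Psi_{-h}$, and the composition law identifies functional inversion of B-series with inversion in the Butcher group, the character of $\Psi_h^*$ is $\psi^* = (\bar\psi)^{-1}$. Now I invoke the multiplicativity of the involution, which gives $\overline{\psi^{-1}} = (\bar\psi)^{-1}$, together with the antipode description $\psi^{-1} = \psi \circ S$ of the group inverse. Combining these,
\[
    \psi^*(\tau) = (\bar\psi)^{-1}(\tau) = \overline{\psi^{-1}}(\tau) = (-1)^{|\tau|}\psi^{-1}(\tau) = (-1)^{|\tau|}\psi(S\tau),
\]
which is precisely \eqref{eq:psi_adj}.

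The computation is short once the correspondences are in place, so the only real obstacle is the bookkeeping: one must keep the time step fixed at $h$ while all $h$-dependence is pushed into the characters, and reconcile the \emph{functional} inverse defining the adjoint with the \emph{group} inverse in $G$. This reconciliation is exactly what the composition formula $B_h(\varphi_2, B_h(\varphi_1, y_0)) = B_h(\varphi_1\varphi_2, y_0)$ supplies, since composing the B-series of $\Psi_{-h}$ with that of its inverse must return the identity series $\varepsilon$; uniqueness of inverses in $G$ then forces the inverse character to be $(\bar\psi)^{-1}$.
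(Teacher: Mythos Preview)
Your proof is correct and follows essentially the same route as the paper: both arguments observe that $\Psi_{-h}$ has character $\bar\psi$, invoke the composition law for B-series to identify the functional inverse with the convolution inverse $(\bar\psi)^{-1}$, and then unwind this as $(-1)^{|\tau|}\psi(S\tau)$. Your version spells out the final identity $(\bar\psi)^{-1}=\overline{\psi^{-1}}$ a bit more explicitly, but the argument is the same.
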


\begin{proof}
Recall the involution $\bar\psi(\tau) := (-1)^{|\tau|}\psi(\tau)$. Then clearly $ y_{\Psi}(-h) = B_h(\bar\psi, y_0)$.
By definition, $\Psi^*_h \circ \Psi_{-h} = \text{Id}$, and so by the composition theorem for B-series
    \begin{equation*}
        B_h(\bar{\psi} \psi^*, y_0) = B_h(\psi^*, B_h(\bar{\psi}, y_0)) = B_h(\varepsilon, y_0).
    \end{equation*}
It follows that $\psi^* = \bar{\psi}^{-1} \varepsilon = \bar{\psi}^{-1}$.
\end{proof}

From now on, given any character $\zeta$, we define $\zeta^*$ to be the character $\zeta^*(\tau) = (-1)^{|\tau|} \zeta(S\tau)$, and we understand that this corresponds to the adjoint operation on the level of B-series methods.\par\medskip

While the above proof gives a clear account of the result in terms of B-series, it is unclear how this relates to Theorem \ref{thm:RK_symmetric} for RK schemes and the elementary weights given by Lemma \ref{lemma:elementary_weights}. In the remainder of this section, we will form the necessary link between these results. From now on, any equation labelled with $(\Sigma)$ will assume summation over all indices that appear, including those that would typically be considered “free” in the sense of Einstein summation convention. For example, given an RK scheme $\Psi$, we may write
\begin{equation*}
    \psi\left(\, \RS{ni}\, \right) = b_i c_i = b_i a_{ij}. \tag{\ensuremath{\Sigma}}
\end{equation*}
When dealing with elementary weights, if we wish to map from a scheme $\Psi$ to its adjoint $\Psi^*$, Theorem \ref{thm:RK_symmetric} leads us to the mapping sending $a_{ij} \mapsto b_{j} - a_{ij},$ where we have dropped the reordering of indices $i \mapsto s + 1 - i$ following Remark \ref{rmk:index_order}. This mapping has a rather natural interpretation when viewed as a mapping on trees. In an elementary weight, $a_{ij}$ represents an edge between $i$ and $j$. When we replace $a_{ij}$ by $b_j - a_{ij}$, we recover the sum of two elementary weights. The first is the weight corresponding to the monomial given by erasing the edge represented by $a_{ij}$ (but not the nodes $i$ and $j$, since these are reinstated as leaves or roots) and multiplying the resulting trees. The second is the negative of the original tree.
\begin{equation} \label{eq:adjoint_tree_map}
\raisebox{-1cm}{

    \treeFour \hspace{0.2cm}\raisebox{0.8cm}{$\mapsto$} \hspace{0.2cm} \treeFive \hspace{0.2cm}\raisebox{0.8cm}{$-$}\hspace{0.2cm} \treeFour
    
    }
\end{equation}

For example,
\begin{equation*}
    \psi\left(\, \RS{ni}\, \right) = b_i a_{ij} \mapsto b_i (b_j - a_{ij}) = b_i b_j - b_i a_{ij} =  \psi\left(\, \RS{n} \, \RS{n}\, \right) -  \psi\left(\, \RS{ni}\, \right). \tag{\ensuremath{\Sigma}}
\end{equation*}
\begin{alignat*}{2}
    \psi\left(\, \RS{nlr}\, \right) = b_i a_{ij} a_{ik} \xmapsto[]{\text{Map }a_{ij}}& \,\, b_i (b_j - a_{ij})a_{ik}\tag{\ensuremath{\Sigma}}\\
    =& \,\, b_i b_j a_{ik} - b_i a_{ij} a_{ik} &&= \psi(\, \RS{n} \, \RS{ni} \,) - \psi(\, \RS{nlr} \,)\\
    \xmapsto[]{\text{Map }a_{ik}}& \,\, b_i b_j (b_k - a_{ik}) - b_ia_{ij}(b_k - a_{ik})\\
    =& \,\, b_i b_j b_k - 2 b_i b_j a_{ik} + b_i a_{ij} a_{ik} &&= \psi(\,\RS{n} \, \RS{n} \, \RS{n}\,) - 2\psi(\,\,\RS{n}\, \RS{ni}\,) + \psi(\,\RS{nlr}\,).
\end{alignat*}
Viewing \eqref{eq:adjoint_tree_map} now as a mapping from trees to forests and dropping all references to $\psi$, we find for the first few trees in $\mathcal{T}$
\begin{align*}
    \RS{n} &\mapsto \RS{n}\\
    \RS{ni} &\mapsto \RS{n} \, \RS{n} - \RS{ni}\\
    \RS{nlr} &\mapsto \RS{n} \, \RS{n} \, \RS{n} - 2\,\RS{n}\, \RS{ni} + \RS{nlr}\\
    \RS{nIi} &\mapsto \RS{n} \, \RS{n} \, \RS{n} - 2 \, \RS{n} \, \RS{ni} + \RS{nIi}\\
    \RS{nlir} &\mapsto \RS{n} \, \RS{n} \, \RS{n} \, \RS{n}  - 3\,\RS{n}\,\RS{n}\,\RS{ni} + 3\,\RS{n}\,\RS{nlr} - \RS{nlir}\\
    \RS{nlRi} &\mapsto \RS{n} \, \RS{n} \, \RS{n} \, \RS{n} -3\,\RS{n}\,\RS{n}\,\RS{ni} + \RS{n}\,\RS{nlr} + \RS{ni}\,\RS{ni} + \RS{n}\,\RS{nIi} - \RS{nlRi}
\end{align*}
which, up to an overall sign, can be recognised as the antipode \eqref{antipode} on trees. This leads us to the second proof of Theorem \ref{thm:psi_adj_weights} for the case when $\Psi$ is an RK method.\par\medskip

\newenvironment{proof2}{\paragraph{$\textit{2}^{\,\textit{nd}}$ Proof of Theorem \ref{thm:psi_adj_weights} for RK methods}}{\hfill$\square$}

\begin{proof2}
    Recall that the antipode \eqref{antipode} on trees is defined as
    \begin{equation*}
        S\tau = \sum_{c \text{ a cut of } \tau} (-1)^{|c| + 1} \tau^c.
    \end{equation*}
    It is easy to see that this can be rewritten as the product of operators
    \begin{equation*}
        S\tau = -\left[\prod_{e \in E} (\text{Id} - \mathfrak{C}_e)\right](\tau),
    \end{equation*}
    where $\mathfrak{C}_e(\tau) := \tau^{\{e\}}$ is the operator on forests which cuts the edge $e$ in $\tau$. This is precisely how $\psi^*$ is constructed via the map $a_{ij} \mapsto b_j - a_{ij}$, up to a difference in sign which is corrected by a factor of $(-1)^{|\tau|}$.
\end{proof2}

\begin{corollary} \label{cor:symmetric_weights}
A B-series method is symmetric if and only if
    \begin{equation*}
        \psi(\tau) = (-1)^{|\tau|}\psi(S \tau) \quad \forall \tau \in \mathcal{T}.
    \end{equation*}
\end{corollary}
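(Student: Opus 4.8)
The plan is to reduce the statement to Theorem~\ref{thm:psi_adj_weights}, which already identifies the elementary weight function of the adjoint method. By Definition~\ref{def:adjointmethod}, a B-series method $\Psi$ is symmetric precisely when $\Psi = \Psi^*$ as integrators, so the entire task is to transfer this equality of maps into an equality of the associated characters on $\mathcal{H}$, at which point the desired identity is read off directly.

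The first step I would carry out is to record that a B-series method is completely determined by its character, i.e.\ that the assignment $\varphi \mapsto B_h(\varphi, \cdot)$ is injective. This rests on the fact that for a sufficiently generic (non-degenerate) vector field $f$ the elementary differentials $\{F(\tau)(y_0)\}_{\tau \in \mathcal{T}}$ are linearly independent, so that two methods agreeing as maps for all $f$ and all $y_0$ must share the same coefficient $\varphi(\tau)$ for every tree $\tau$. Consequently, $\Psi = \Psi^*$ holds if and only if the corresponding characters coincide, $\psi = \psi^*$, where throughout I follow the convention $\psi^*(\tau) = (-1)^{|\tau|}\psi(S\tau)$ fixed immediately after the proof of Theorem~\ref{thm:psi_adj_weights}.

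The second step is then immediate: invoking Theorem~\ref{thm:psi_adj_weights}, the adjoint character is given explicitly by $\psi^*(\tau) = (-1)^{|\tau|}\psi(S\tau)$, so the equivalence $\Psi = \Psi^* \iff \psi = \psi^*$ becomes
\begin{equation*}
    \psi \text{ symmetric} \iff \psi(\tau) = (-1)^{|\tau|}\psi(S\tau) \quad \forall \tau \in \mathcal{T},
\end{equation*}
which is exactly the claimed condition. Since $\psi$, the involution $\tau \mapsto (-1)^{|\tau|}\tau$, and the antipode $S$ are all multiplicative, it suffices to verify the identity on trees rather than on all forests; the forest case follows automatically.

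The only genuinely non-formal ingredient, and hence the main obstacle, is the faithfulness of the B-series representation used in the first step: that equality of integrators forces equality of characters. Once this is granted the corollary is a direct restatement of Theorem~\ref{thm:psi_adj_weights}. I would justify it either by citing the standard linear independence of elementary differentials, or, more constructively, by exhibiting for each order $p$ a vector field whose elementary differentials separate all trees of size at most $p$, and then letting $p$ grow so that agreement of the two methods forces $\psi(\tau) = \psi^*(\tau)$ for every $\tau \in \mathcal{T}$.
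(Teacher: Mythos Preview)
Your proposal is correct and follows essentially the same route as the paper: the paper's proof consists of nothing more than the chain $\Psi = \Psi^* \Leftrightarrow \psi = \psi^* \Leftrightarrow \psi(\tau) = (-1)^{|\tau|}\psi(S\tau)$, invoking Theorem~\ref{thm:psi_adj_weights} for the second equivalence. You are simply more explicit than the paper about the faithfulness step (that $\Psi = \Psi^*$ forces $\psi = \psi^*$), which the paper takes for granted.
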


\begin{proof}
    \begin{align*}
        \Psi = \Psi^* &\Leftrightarrow \psi(\tau) = \psi^*(\tau) \quad \forall \tau \in \mathcal{T}\\
        &\Leftrightarrow \psi(\tau) = (-1)^{|\tau|}\psi(S \tau) \quad \forall \tau \in \mathcal{T}
    \end{align*}
\end{proof}

\begin{example} 
The implicit midpoint rule is given by the Butcher tableau
\[
\begin{array}{c|c}
c & A \\
\hline
 & b^T
\end{array}
=
\begin{array}{c|c}
1/2 & 1/2\\
\hline
& 1
\end{array}
\]

It is easy to see that this method is symmetric, and we can check that the result of Corollary \ref{cor:symmetric_weights} holds for the first few trees. Since the method has order $2$, by Theorem \ref{thm:order_p_weights} it must be the case that $\psi(\tau) = 1/\tau!$ for all $|\tau| \leq 2$. Since the result of Corollary \ref{cor:symmetric_weights} holds when $\psi(\tau)$ is replaced by the reciprocal of the tree factorial, it is only the cases where $|\tau| > 2$ which are of interest when checking the scheme. For brevity, we will remove redundant factors of $\RS{n}$ in the Table \ref{tab:adjoint_trees} below, since $\psi(\RS{n}) = \sum_i b_i = 1$.
\begin{table}[H]
    \centering
    {\renewcommand{\arraystretch}{1.7}
  \begin{tabular}{ccccc}
    $\tau$ & $1 / \tau!$ & $\psi(\tau)$ & $(-1)^{|\tau|}S\tau$ & $(-1)^{|\tau|}\psi(S\tau)$ \\
    \cline{1-5}
    \RS{n} & 1 & 1 & \RS{n} & 1 \\
    \RS{ni} & 1/2 & 1/2 & \RS{n} - \RS{ni} & 1/2 \\
    \RS{nlr} & 1/3 & 1/4 & \RS{n} - 2 \RS{ni} + \RS{nlr} & 1/4 \\
    \RS{nIi} & 1/6  & 1/4 & \RS{n} - 2 \RS{ni} + \RS{nIi} & 1/4 \\
    \RS{nlir} & 1/4 & 1/8 & \RS{n} - 3 \RS{ni} + 3 \RS{nlr} - \RS{nlir} & 1/8 \\
    \RS{nlRi} & 1/8 & 1/8 & \RS{n} - 3 \RS{ni} + \RS{nlr} + \RS{ni}\,\RS{ni} + \RS{nIi} - \RS{nlRi} & 1/8 \\
    \RS{nIlr} & 1/12 & 1/8 & \RS{n} - 3 \RS{ni} + \RS{nlr} + 2 \RS{nIi} - \RS{nIlr} & 1/8\\
    \RS{nIIi} & 1/24 & 1/8 & \RS{n} - 3 \RS{ni} + \RS{ni}\,\RS{ni} + 2 \RS{nIi} - \RS{nIIi} & 1/8
  \end{tabular}}
\caption{Verification of Theorem \ref{thm:psi_adj_weights} for the implicit midpoint rule.}
\label{tab:adjoint_trees}
\end{table}
\end{example}

\begin{example}
The $2$-stage Gauss collocation method of order $4$ is given by the Butcher tableau
\[
\begin{array}{c|cc}
\frac{1}{2} - \frac{\sqrt{3}}{6} & \frac{1}{4} & \frac{1}{4} - \frac{\sqrt{3}}{6}\\
\frac{1}{2} + \frac{\sqrt{3}}{6} & \frac{1}{4} + \frac{\sqrt{3}}{6} & \frac{1}{4}\\
\hline
& \frac{1}{2} & \frac{1}{2}
\end{array}
\]

Since this method is of order $4$, we have $\psi(\tau) = 1 / \tau!$ for all trees $\tau \in \mathcal{T}$ of size $|\tau| \leq 4$ by Theorem \ref{thm:order_p_weights}. The relations for $|\tau| = 5$ are also not of particular interest, since the terms involving $\psi(\tau)$ cancel. However, for a tree $\tau$ with $|\tau| = 6$, for instance, one finds
\begin{equation*}
    \psi \,\RS{nrIlir} = b_i a_{ij} c_j^3 c_i \tag{\ensuremath{\Sigma}} = \frac{7}{144}\quad \left(\neq \frac{1}{\tau!} = \frac{1}{24}\right).
\end{equation*}
To compute $(-1)^{|\tau|}\psi(S\tau)$ we will require
\begin{equation*}
    \psi \,\RS{nIlir} = b_i a_{ij} c_j^3 \tag{\ensuremath{\Sigma}} = \frac{1}{18}\quad \left(\neq \frac{1}{\tau!} = \frac{1}{20}\right),
\end{equation*}
\begin{equation*}
    \psi \,\RS{nrIlr} = b_i a_{ij} c_j^2 c_i \tag{\ensuremath{\Sigma}} = \frac{5}{72}\quad \left(\neq \frac{1}{\tau!} = \frac{1}{15}\right),
\end{equation*}
with the remaining instances of $\psi$ equaling reciprocals of tree factorials. We can now check that (using \RS{n}\,\!\!-reduced notation)
\begin{align*}
    &\psi^* \,\RS{nrIlir} = \psi S \,\RS{nrIlir}\\
    &= \psi \left[ \RS{n} - 5\,\RS{ni} + 3\,\RS{nIi} + 4\,\RS{nlr} + 3\,\RS{ni}\,\RS{ni} - 3\,\RS{nIlr} - \RS{nlir} - 3\,\RS{nrLi} - 3\,\RS{ni}\,\RS{nlr} + \RS{nIlir} + \RS{ni}\,\RS{nlir} + 3\,\RS{nrIlr} - \RS{nrIlir} \right]\\
    &= 1 - 5 \cdot \frac{1}{2} + 3 \cdot \frac{1}{6} + 4 \cdot \frac{1}{3} + 3 \cdot \frac{1}{4} - 3 \cdot \frac{1}{12} - \frac{1}{4} - 3 \cdot \frac{1}{8} - 3 \cdot \frac{1}{6} + \frac{1}{18} + \frac{1}{8} + 3\cdot \frac{5}{72} - \frac{7}{144}\\
    &= 7/144\\
    &= \psi\,\RS{nrIlir}.
\end{align*}
\end{example}

%%%%%%%%%%%%%%%%%%%%%%%%%%%%%%%%%%%%%%%%%%%%%
\section{Odd and Even Characters}
\label{sec:evenodd}
%%%%%%%%%%%%%%%%%%%%%%%%%%%%%%%%%%%%%%%%%%%%%

Having understood what it means for B-series methods and RK schemes to be symmetric at the level of elementary weights $\psi$, we can begin to interpret this at the corresponding level of the rooted tree Hopf algebra $\mathcal{H}$. Recall that $\psi : \mathcal{H} \to \mathbb{R}$ is a multiplicative linear functional, i.e., a Hopf algebra \textit{character} on $\mathcal{H}$. Corollary \ref{cor:symmetric_weights} says precisely that the scheme is symmetric if and only if $\psi$ defines a so-called \textit{odd} character in the sense of Bergeron, Sottile, and Aguiar \cite{aguiar2006combinatorial}. 

\begin{definition}
    Given a character $\zeta$ on the Hopf algebra $\mathcal{H}$, recall the involution $\bar\zeta(\tau) := (-1)^{|\tau|}\zeta(\tau)$. We say that $\zeta$ is \textit{even} if $\bar\zeta = \zeta$ and \textit{odd} if $\bar\zeta = \zeta^{-1} = \zeta \circ S$.
\end{definition}

Whilst an odd character defines a symmetric method, an even character defines a method $\Psi$ satisfying $y_\Psi(s) = y_\Psi(-s)$. We will refer to such methods as \say{antisymmetric}, motivated by Theorem \ref{thm:odd_even_characters} and Corollary \ref{cor:RK_decomp} below. Given any character $\zeta$ and recalling \eqref{eq:psi_adj}, a canonical way of constructing an odd character is given by $\zeta^* \zeta$.

We immediately note that, on the level of numerical schemes, the above result is equivalent to Proposition \ref{prop:composition} when $\zeta$ is the character of a scheme. In fact, we will see that all symmetric B-series methods must be of this form.\par\medskip

An important result in the study of odd and even characters is that any character can be decomposed into the product of an odd and an even character \cite{aguiar2006combinatorial}. Recall that any character can be written as a sum of its graded components, $\zeta=\varepsilon + \sum_{n>0} \zeta_n$, where $\zeta_n$ is zero on $\mathcal{H}_m$, if $m \neq n$.

\begin{theorem}[{\cite[Theorem 1.5]{aguiar2006combinatorial}}]\label{thm:odd_even_characters}
    Let $\mathcal{H}$ be a graded connected Hopf algebra over a field $k$ of characteristic different from 2. Every character $\zeta$ decomposes uniquely as the product of an even character, $\zeta^+$, and an odd character, $\zeta^-$,
    \begin{equation*}
        \zeta = \zeta^+ \zeta^-.
    \end{equation*}
We have that $\zeta^- = (\zeta^+)^{-1} \zeta$ and $\zeta^+$ can be constructed as
\begin{align}
\label{eq:even_component_construction}
\begin{aligned}
        (\zeta^+)_0 &= \varepsilon\\
        (\zeta^+)_n &= 2^{-1} \left\{\bar\zeta_n - (\zeta^{-1})_n - \sum_{\substack{i+j+k = n\\0\leq i,j,k < n}} (\zeta^+)_i  (\zeta^{-1})_j (\zeta^+)_k\right\}.
\end{aligned}
\end{align}
\end{theorem}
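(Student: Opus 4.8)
\emph{Strategy.} The plan is to reduce the decomposition to the extraction of a single square root in the Butcher group $G$, and to recognise the explicit recursion \eqref{eq:even_component_construction} as the degree-graded form of one group equation. Throughout I use that the involution is multiplicative, $\overline{\alpha\beta} = \bar\alpha\,\bar\beta$ and $\overline{\alpha^{-1}} = \bar\alpha^{\,-1}$, and that $\zeta$ is \emph{even} iff $\bar\zeta = \zeta$ and \emph{odd} iff $\bar\zeta = \zeta^{-1}$.

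\emph{Reduction to a group equation.} Any factorisation $\zeta = \zeta^+\zeta^-$ forces $\zeta^- = (\zeta^+)^{-1}\zeta$, so it suffices to produce an \emph{even} character $\zeta^+$ for which $\zeta^- := (\zeta^+)^{-1}\zeta$ is \emph{odd}. Imposing evenness $\overline{\zeta^+} = \zeta^+$ and rewriting the oddness condition $\overline{\zeta^-} = (\zeta^-)^{-1}$ via multiplicativity of the involution, I would show these two requirements together are equivalent to the single identity
\begin{equation*}
    \zeta^+ \zeta^{-1}\zeta^+ = \bar\zeta. \tag{$\star$}
\end{equation*}
Extracting the degree-$n$ component of $(\star)$, using that convolution respects the grading so that $(\alpha\beta)_n = \sum_{i+j=n}\alpha_i\beta_j$ and that $(\zeta^+)_0 = (\zeta^{-1})_0 = \varepsilon$, isolates the three terms carrying a single index equal to $n$, namely $2(\zeta^+)_n + (\zeta^{-1})_n$; solving for $(\zeta^+)_n$ yields exactly \eqref{eq:even_component_construction}. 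The division by $2$ is the sole place the hypothesis $\mathrm{char}(k)\neq 2$ enters the recursion, and it shows $\zeta^+$ is determined degree by degree.

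\emph{Square-root picture and existence.} Setting $\mu := \zeta^+\zeta^{-1}$ turns $(\star)$ into $\mu^2 = \bar\zeta\zeta^{-1} =: w$, and a short computation gives $\bar w = w^{-1}$, i.e.\ $w$ is odd. Thus existence amounts to extracting a square root of the odd character $w$. Granting the key lemma that squaring is a bijection on $G$ when $\mathrm{char}(k)\neq 2$, let $\mu$ be the unique square root of $w$. Then $\overline{\mu^{-1}}$ also squares to $\overline{w^{-1}} = w$, so uniqueness forces $\bar\mu = \mu^{-1}$; that is, $\mu$ is automatically odd. Writing $\bar\zeta = w\zeta$, one checks that $\zeta^+ := \mu\zeta$ is even, $\overline{\mu\zeta} = \bar\mu\,\bar\zeta = \mu^{-1}(w\zeta) = \mu^{-1}\mu^2\zeta = \mu\zeta$, and that $(\star)$ holds since $\zeta^+\zeta^{-1}\zeta^+ = \mu^2\zeta = w\zeta = \bar\zeta$, whence $\zeta^- = (\zeta^+)^{-1}\zeta$ is odd. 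For uniqueness, if $\zeta = \eta^+\eta^-$ is any even–odd factorisation, the same manipulation shows $\eta^+\zeta^{-1}$ is an odd square root of $w$; by uniqueness of square roots it equals $\mu$, so $\eta^+ = \mu\zeta = \zeta^+$ and $\eta^- = \zeta^-$.

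\emph{Main obstacle.} Everything rests on the key lemma that the degree-by-degree square root $\alpha$ of a character $\beta$ is itself a character, and this is the technical heart. I would prove $\alpha(xy) = \alpha(x)\alpha(y)$ by induction on $|x|+|y|$: expanding the identity $\beta(xy) = \beta(x)\beta(y)$ using that $\Delta$ is an algebra morphism and writing both sides in Sweedler notation, all terms agree under the inductive hypothesis except one extreme pair contributing $2\alpha(xy)$ on one side against $2\alpha(x)\alpha(y)$ on the other, and $\mathrm{char}(k)\neq 2$ cancels the factor of $2$. It is precisely here that graded connectedness — guaranteeing finiteness of the sums and the base case $\alpha_0 = \varepsilon$ — together with the characteristic hypothesis are indispensable.
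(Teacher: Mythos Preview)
The paper does not actually supply a proof of this theorem: it is quoted verbatim from Aguiar--Bergeron--Sottile and used as a black box. What the paper \emph{does} do, in Section~\ref{sec:rationalpowers}, is develop an independent square-root machinery (Theorem~\ref{thm:hopf_roots}, Proposition~\ref{prop:odd_roots}) in order to recover $\psi^-$ via $(\psi^-)^2=\psi^*\psi$ rather than via the recursion~\eqref{eq:even_component_construction}. Your proposal is essentially this square-root route, carried all the way through to a self-contained proof of the decomposition.

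There are two genuine differences worth noting. First, your parametrisation is on the even side: you take $\mu=\zeta^+\zeta^{-1}$ as the square root of the odd character $\bar\zeta\,\zeta^{-1}$, whereas the paper takes $\psi^-$ as the square root of the odd character $\psi^*\psi=\bar\psi^{-1}\psi$. These are equivalent reformulations, and your choice has the pleasant feature that the group identity $(\star)$ directly yields the stated recursion~\eqref{eq:even_component_construction} after grading, which the paper does not spell out. Second, for the key lemma that the square root of a character is again a character, the paper's Theorem~\ref{thm:hopf_roots} uses the $\exp/\log$ correspondence and the Eulerian idempotent, which requires $\mathcal{H}$ to be \emph{commutative}; your inductive argument on $|x|+|y|$ avoids this hypothesis and works for any graded connected Hopf algebra, matching the generality of the theorem as stated. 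One small wording issue: $(\star)$ alone is not equivalent to both evenness and oddness together---rather, \emph{given} evenness of $\zeta^+$, the oddness of $\zeta^-$ is equivalent to $(\star)$---but your existence argument correctly verifies evenness of $\mu\zeta$ separately, so the logic is sound.
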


In addition, the decomposition can be written as the product of an odd character with an even character by writing $\zeta = (\zeta^+ \zeta^- (\zeta^+)^{-1}) \zeta^+$, since odd characters remain odd after conjugation with an even character. A useful relation that must be satisfied by this decomposition is 
$$
    (\zeta^-)^2 = \zeta^* \zeta,
$$
which is the basis for the second part of the following corollary.

\begin{corollary}[Symmetric Decomposition of B-Series Methods]
\label{cor:RK_decomp}
Every B-series method $\Psi$ can be written as the composition
    \begin{equation}
    \label{eq:Psi_decomp}
        \Psi = \Psi^- \circ \Psi^+
    \end{equation}
    of two B-series methods $\Psi^-$ and $\Psi^+$, where $\Psi^-$ is symmetric and $\Psi^+$ is antisymmetric. Moreover, we have
    \begin{equation*}
        \Psi^- \circ \Psi^- = \Psi \circ \Psi^*.
    \end{equation*}
\end{corollary}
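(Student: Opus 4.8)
The plan is to derive both parts of Corollary \ref{cor:RK_decomp} directly from the odd-even decomposition of characters established in Theorem \ref{thm:odd_even_characters}, translated to the level of B-series methods via the composition theorem. The key dictionary is: the convolution product of characters corresponds to composition of B-series methods (with the order reversed, since $B_h(\varphi_2, B_h(\varphi_1, y_0)) = B_h(\varphi_1 \varphi_2, y_0)$), odd characters correspond to symmetric methods by Corollary \ref{cor:symmetric_weights}, and even characters correspond to antisymmetric methods by definition.

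For the first statement, I would begin by applying Theorem \ref{thm:odd_even_characters} to the character $\psi$ of the given B-series method $\Psi$, writing $\psi = \psi^+ \psi^-$ where $\psi^+$ is even and $\psi^-$ is odd. By the composition theorem, this convolution identity corresponds to the composition $\Psi = \Psi^- \circ \Psi^+$ of the associated B-series methods, where $\Psi^+$ has character $\psi^+$ and $\Psi^-$ has character $\psi^-$. Since $\psi^-$ is odd, Corollary \ref{cor:symmetric_weights} gives immediately that $\Psi^-$ is symmetric; since $\psi^+$ is even, $\Psi^+$ is antisymmetric by definition. One small point requiring care is the order of composition: the convolution product $\psi^+ \psi^-$ translates to composing in the reverse order, which is precisely what yields \eqref{eq:Psi_decomp}.

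For the second statement, I would invoke the relation $(\zeta^-)^2 = \zeta^* \zeta$ noted in the excerpt just before the corollary. Specialising to $\zeta = \psi$, this reads $(\psi^-)^2 = \psi^* \psi$. Translating each side via the composition theorem, the left-hand side $\psi^- \psi^- = (\psi^-)^2$ corresponds to $\Psi^- \circ \Psi^-$, while the right-hand side $\psi^* \psi$ corresponds to $\Psi \circ \Psi^*$, since $\psi^*$ is the character of the adjoint $\Psi^*$ by Theorem \ref{thm:psi_adj_weights}. This gives $\Psi^- \circ \Psi^- = \Psi \circ \Psi^*$ directly.

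The main obstacle is bookkeeping rather than deep mathematics: one must be scrupulous about the order-reversal between convolution and composition so that the symmetric factor ends up on the correct side in \eqref{eq:Psi_decomp}, and one must confirm that the relation $(\zeta^-)^2 = \zeta^* \zeta$ is genuinely available. I would verify the latter by a short computation using oddness of $\psi^-$ and evenness of $\psi^+$: from $\psi = \psi^+ \psi^-$ one computes $\psi^* = \bar\psi^{-1} = (\overline{\psi^+ \psi^-})^{-1} = (\bar\psi^+ \bar\psi^-)^{-1}$, and using $\bar\psi^+ = \psi^+$ (even) together with $\bar\psi^- = (\psi^-)^{-1}$ (odd) and multiplicativity of the involution, this simplifies so that $\psi^* \psi = (\psi^-)^2$ after cancellation of the $\psi^+$ factors. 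This verification is elementary but is the one place where the parity definitions must be deployed explicitly.
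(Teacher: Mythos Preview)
Your proposal is correct and is precisely the argument the paper has in mind: the corollary is stated without a standalone proof, being an immediate translation of Theorem~\ref{thm:odd_even_characters} via the composition theorem together with the relation $(\zeta^-)^2=\zeta^*\zeta$ highlighted just before the statement. Your explicit verification of $(\psi^-)^2=\psi^*\psi$ from the parity of $\psi^+$ and $\psi^-$, and your careful handling of the order reversal between convolution and composition, fill in exactly the bookkeeping the paper leaves implicit.
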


\begin{figure}[h]
    \centering
    \includegraphics[width = 0.6\textwidth, trim={1.1cm 0.4cm 1.1cm 0.5cm},clip]{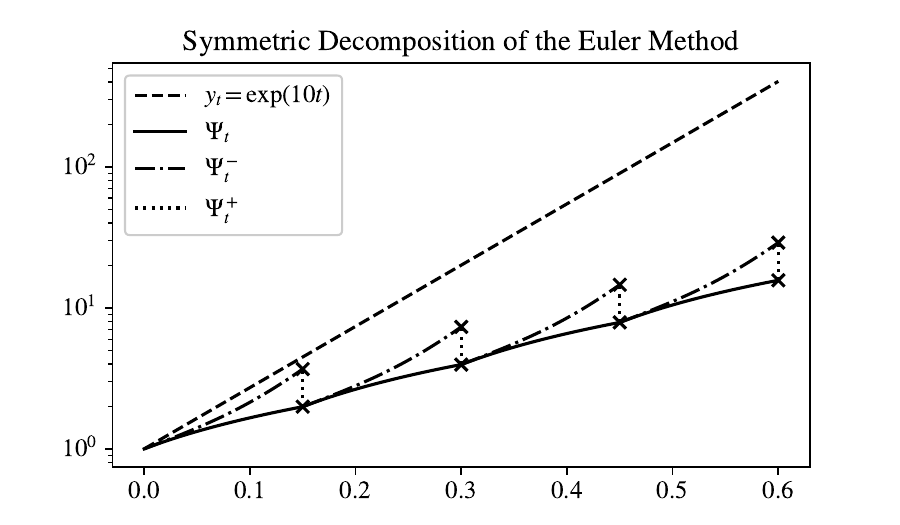}
    \caption{Decomposition of the Euler method $\Psi$ into its symmetric and antisymmetric components, evaluated on the ODE $dy/dt = 10y$, $y_0=1$ with a step size of $h=0.15$. The even component is viewed as a correction term connecting the odd component with the original scheme.}
\end{figure}

The structure resulting from the decomposition \eqref{eq:Psi_decomp} in Corollary \ref{cor:RK_decomp} can be viewed in different ways. For instance, one can view symmetric methods as a set of representatives for the left (and right) cosets of antisymmetric methods, when the latter is viewed as a subgroup of the Butcher group. Alternatively, it is clear that the relation $\Psi \sim \Phi \Leftrightarrow \Psi^- = \Phi^-$ is an equivalence relation on methods, and each equivalence class has a \say{central} element, given by the unique symmetric method in the class. The above corollary suggests that the canonical construction of symmetric methods given by Proposition \ref{prop:composition} is, in fact, the only possible form for a symmetric scheme. To establish this claim and analyze the properties of symmetric components, we choose to recover the symmetric part using the relation $(\psi^-)^2 = \psi^* \psi$, instead of relying on the construction given in \eqref{eq:even_component_construction}. As such, it will be prudent for us to briefly study the nature of square roots, or more generally rational powers, of characters on Hopf algebras.

%%%%%%%%%%%%%%%%%%%%%%%%%%%%%%%%%%%%%%%%%%%%%
\section{Rational Powers of Maps Over Hopf Algebras and \texorpdfstring{$\psi^-$}{ψ⁻}}
\label{sec:rationalpowers}
%%%%%%%%%%%%%%%%%%%%%%%%%%%%%%%%%%%%%%%%%%%%%

We will briefly generalise away from the Butcher--Connes--Kreimer Hopf algebra and prove a result about rational powers of linear multiplicative maps on a graded connected Hopf algebra $(\mathcal{H}, \Delta, \mu, \varepsilon, \nu, S)$. Naturally, such maps include characters via the inclusion $\nu(\mathbb{R}) = \mathcal{H}_0 \subset \mathcal{H}$. Throughout, we will write $\mathrm{Id}\colon \mathcal{H} \to \mathcal{H}$ to mean the identity map, $\operatorname{Id}(x) = x$, for all $x \in \mathcal{H}$.

\begin{definition}
Given two linear maps $f,g : \mathcal{H} \to \mathcal{H}$, we define the product map by convolution
    \begin{equation*}
        fg = \mu \circ (f \otimes g) \circ \Delta,
\end{equation*} 
and denote by power notation the $n$-fold product $f^n = ff\cdots f$.
\end{definition}
  It is a standard fact that the convolution product of two multiplicative maps is again multiplicative.
  Since \(\mathcal{H}\) is graded and connected, there are well-defined maps \(\exp\colon\operatorname{End}(\mathcal{H})\to\operatorname{End}(\mathcal{H})\) and \(\log\colon\operatorname{End}(\mathcal{H})\to\operatorname{End}(\mathcal{H})\).
  If \(\mathcal{H}\) is commutative, it is a standard fact that if \(\psi\in\operatorname{End}(H)\) is multiplicative then \(\alpha=\log\psi\) is infinitesimal, i.e., it satisfies \(\alpha(xy)=\varepsilon(x)\alpha(y)+\alpha(x)\varepsilon(y)\).
  Conversely, if \(\alpha\) is infinitesimal, its exponential \(\psi=\exp\alpha\) is multiplicative \cite{CarPat2021}.

  We now use this correspondence to define rational powers of multiplicative maps.
  \begin{theorem}\label{thm:hopf_roots}
    Let \(\mathcal{H}\) be a graded connected and commutative Hopf algebra, \(\psi\in\operatorname{End}(\mathcal{H})\) be multiplicative.
    For every \(q=\tfrac{m}{n}\in\mathbb{Q}\) there exists a unique multiplicative map \(\psi^q\in\operatorname{End}(\mathcal{H})\) such that \((\psi^q)^n=\psi^m\).
    It is characterised by the recursion
    \begin{equation}\label{eq:square_root_recursion}
      \psi^q = \frac1n\left[ \psi^m - \sum_{k=2}^n\binom{n}{k}\mu^{(k-1)}\circ(\psi^q)^{\otimes k}\circ\widetilde{\Delta}^{(k-1)} \right].
    \end{equation}
  \end{theorem}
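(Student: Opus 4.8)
The plan is to construct $\psi^q$ through the exponential–logarithm correspondence recalled just above the statement, and only afterwards verify that the resulting map obeys the stated recursion. Since $\mathcal{H}$ is graded and connected, $\alpha := \log\psi$ is a well-defined infinitesimal map. Because the infinitesimal maps form a vector space, $q\alpha = \tfrac{m}{n}\alpha$ is again infinitesimal, so $\psi^q := \exp(q\alpha)$ is multiplicative by the quoted correspondence. Using that convolution powers of a single multiplicative map commute, and that $\exp$ turns addition of infinitesimal maps into convolution of multiplicative maps, we obtain $(\psi^q)^n = \exp(nq\alpha) = \exp(m\alpha) = \psi^m$, which settles existence.

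For uniqueness among multiplicative maps, suppose $\chi$ is multiplicative with $\chi^n = \psi^m$. Then $\log\chi$ is infinitesimal, and since $\chi^n = \exp(n\log\chi)$ we get $n\log\chi = \log(\chi^n) = \log(\psi^m) = m\log\psi$, hence $\log\chi = q\alpha$ and therefore $\chi = \exp(q\alpha) = \psi^q$. The only nontrivial input here is that $\exp$ and $\log$ are mutually inverse bijections between infinitesimal maps and unital multiplicative maps, which is guaranteed by connectedness and the resulting local nilpotency of $\phi - \nu\varepsilon$.

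It remains to derive the recursion \eqref{eq:square_root_recursion}. Write $\phi := \psi^q$ and expand the $n$-fold convolution power as $\phi^n = \mu^{(n-1)}\circ\phi^{\otimes n}\circ\Delta^{(n-1)}$, evaluated on a homogeneous $x$ of degree $N \ge 1$. The combinatorial heart is the connectedness decomposition of the iterated coproduct: grouping the terms of $\Delta^{(n-1)}(x)$ by the set $S \subseteq \{1,\dots,n\}$ of tensor slots carrying a positive-degree factor, one has $\Delta^{(n-1)}(x) = \sum_{\emptyset \ne S}\iota_S\big(\widetilde{\Delta}^{(|S|-1)}(x)\big)$, where $\iota_S$ inserts the $|S|$ positive-degree factors into the slots of $S$ and the unit $\nu$ elsewhere. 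Applying $\phi^{\otimes n}$ (which fixes $\nu$, as $\phi$ is unital) and then $\mu^{(n-1)}$, the unit slots are absorbed, and commutativity of $\mathcal{H}$ makes the surviving contribution depend only on $k = |S|$, so the $\binom{n}{k}$ choices of $S$ collapse into one term. This gives $\phi^n(x) = \sum_{k=1}^n \binom{n}{k}\,\mu^{(k-1)}\circ\phi^{\otimes k}\circ\widetilde{\Delta}^{(k-1)}(x)$. Isolating the $k=1$ summand, which equals $n\phi(x)$ since $\mu^{(0)} = \widetilde{\Delta}^{(0)} = \operatorname{Id}$, and substituting $\phi^n = \psi^m$, rearrangement yields exactly \eqref{eq:square_root_recursion}.

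Finally, the recursion is well-founded: for $k \ge 2$ every tensor factor of $\widetilde{\Delta}^{(k-1)}(x)$ has degree $\ge 1$ and $< N$, so the right-hand side involves $\psi^q$ only in degrees below $N$; together with $\psi^q|_{\mathcal{H}_0} = \operatorname{Id}$ this shows the recursion determines $\psi^q$ uniquely grade by grade, reconciling the ``characterised by'' claim with the $\exp$/$\log$ construction. I expect the main obstacle to be the clean justification of the iterated-coproduct expansion and the use of commutativity to collect the binomial coefficients; once the $\exp$/$\log$ machinery is granted, the rest is bookkeeping.
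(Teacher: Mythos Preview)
Your argument is correct and follows essentially the same strategy as the paper: both define $\psi^q=\exp(q\log\psi)$ and use the $\exp$/$\log$ correspondence to obtain existence and multiplicativity. The one noteworthy difference is in how the recursion is extracted. The paper exploits multiplicativity once more to write $\phi^n=\phi\circ\mathrm{Id}^n$ and then expands $\mathrm{Id}^n=(J+u\varepsilon)^n$ binomially, using $J^k=\mu^{(k-1)}\circ\widetilde{\Delta}^{(k-1)}$ on $\mathcal{H}_+$; this packages your subset decomposition of $\Delta^{(n-1)}$ into a single algebraic identity and avoids the explicit bookkeeping with the sets $S$. Your direct expansion is equally valid and arguably more transparent, and your observation that the recursion is well-founded grade by grade makes the ``characterised by'' (hence uniqueness) claim explicit, which the paper leaves implicit. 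One small remark: in your derivation of the recursion, commutativity of $\mathcal{H}$ is not actually needed to collapse the $\binom{n}{k}$ terms, since the relative order of the nontrivial tensor factors is preserved under $\iota_S$ and the unit slots are absorbed; commutativity enters only, as in the paper, to guarantee that $\log\psi$ is infinitesimal and hence that $\psi^q$ is multiplicative.
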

  \begin{proof}
    It suffices to define \(\psi^q = \exp(q\log\psi)\), since then
    \[
      (\psi^q)^n = \exp(nq\log\psi) = \exp(m\log\psi) = \psi^m.
    \]
    Let us show that \(\psi^q=\psi\circ\mathrm{Id}^q\), where
    \[
      \mathrm{Id}^q \coloneqq \exp\left( q\log(\mathrm{Id}) \right)=\sum_{k=0}^\infty\frac{q^k}{k!}\mathfrak{e}^k
    \]
    and \(\mathfrak{e}\coloneqq\log(\mathrm{Id})\in\operatorname{End}(\mathcal{H})\) is the Eulerian idempotent, given by
    \[
      \mathfrak{e} = \sum_{k=1}^\infty \frac{(-1)^{k-1}}{k}J^k = \sum_{k=1}^\infty\mu^{(k-1)}\circ\widetilde{\Delta}^{(k-1)},
    \]
    where \(J\coloneqq\mathrm{Id}-\nu\circ\varepsilon\) is the canonical projection of \(\mathcal{H}\) onto \(\mathcal{H}_+=\ker\varepsilon\), and \(\mu^{(k-1)}\colon\mathcal{H}^{\otimes k}\to\mathcal{H}\) and \(\widetilde{\Delta}^{(k-1)}\colon\mathcal{H}\to\mathcal{H}^{\otimes k}\) are the iterated versions of the product and reduced coproduct, respectively.

    Indeed, note that since \(\psi\) is multiplicative we have
    \begin{align*}
      \log\psi &= \sum_{k=1}^{\infty}\frac{(-1)^{k-1}}{k}\mu^{(k-1)}\circ(\psi-\nu\circ\varepsilon)^{\otimes k}\circ\Delta^{(k-1)}\\
               &= \sum_{k=1}^{\infty}\frac{(-1)^{k-1}}{k}\mu^{(k-1)}\circ\psi^{\otimes k}\circ\widetilde{\Delta}^{(k-1)} \\
               &= \psi\circ\left( \sum_{k=1}^\infty \frac{(-1)^{k-1}}{k}\mu^{(k-1)}\circ\widetilde{\Delta}^{(k-1)} \right)\\
               &= \psi\circ\mathfrak{e}.
    \end{align*}
    For the same reason we also have \((\log\psi)^k=\psi\circ\mathfrak{e}^k\) for all \(k\ge 1\), from where the claim follows.

    To show the recursion, note that the identity \((\psi^q)^n=\psi^m\) implies that \(\psi^q\circ\mathrm{Id}^n=\psi^m\). Therefore
    \[
      \psi^q\circ\left( \mathrm{Id}^n-n\mathrm{Id} \right) + n\psi^q = \psi^m
    \]
    which implies the result, after noting that on \(\mathcal{H}_+\)
    \begin{align*}
      \mathrm{Id}^n - n\mathrm{Id} &= (J+u\circ\varepsilon)^n - n\mathrm{Id}\\
                                   &= \sum_{k=2}^n\binom{n}{k}J^k\\
                                   &= \sum_{k=2}^n\binom{n}{k}\mu^{(k-1)}\circ\widetilde{\Delta}^{(k-1)}.
    \end{align*}
  \end{proof}

\begin{remark}
Following \cite{CarPat2021}, we note that the $k$-th convolution power of the $\mathrm{Id}$ map is known as the $k$-th dilation (or $k$-th Adams operation). These maps are instrumental in the context of classical structure theorems for graded Hopf algebras. It is easy to see that $\mathrm{Id}^k \mathrm{Id}^l=\mathrm{Id}^{k+l}$, $\mathrm{Id}^k \circ \mathrm{Id}^l=\mathrm{Id}^{kl}$, and $(\mathrm{Id}^k \mathrm{Id}^l)\circ \mathrm{Id}^m=(\mathrm{Id}^k\circ \mathrm{Id}^m) (\mathrm{Id}^l\circ \mathrm{Id}^m)$, which are the underpinning properties of the algebra.
\end{remark}

\begin{proposition} \label{prop:odd_roots}
    Let $\mathcal{H}$ be a Hopf algebra satisfying the conditions of Theorem \ref{thm:hopf_roots}. If $\psi$ is an odd character over $\mathcal{H}$, then so is $\psi^q$ for all $q \in \mathbb{Q}$.
\end{proposition}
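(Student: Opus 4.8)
The plan is to route everything through the exponential/logarithm correspondence already used in the proof of Theorem \ref{thm:hopf_roots}, where $\psi^q = \exp(q\log\psi)$. The guiding idea is that oddness becomes linear — hence obviously stable under the scaling $\alpha \mapsto q\alpha$ — once transported to the level of the infinitesimal character $\alpha := \log\psi$. Concretely, I would first show that $\psi$ is odd precisely when $\bar\alpha = -\alpha$, and then read off the result by applying $\exp(q\,\cdot\,)$.

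First I would record that the involution $\bar\zeta = \zeta\circ\Theta$, where $\Theta(\tau) = (-1)^{|\tau|}\tau$ is the grading automorphism of $\mathcal{H}$, is an automorphism of the (non-commutative) convolution algebra of characters: since $\Theta$ is a Hopf algebra morphism, in particular $\Delta\circ\Theta = (\Theta\otimes\Theta)\circ\Delta$, one recovers the multiplicativity $\overline{\zeta_1\zeta_2} = \bar\zeta_1\bar\zeta_2$ and $\bar\varepsilon = \varepsilon$ already noted in Section \ref{sec:adjointmethod}. Because $\log$ and $\exp$ are convolution power series in $\psi-\varepsilon$ and in $\alpha$ respectively, and the involution fixes $\varepsilon$ and respects convolution termwise (convergence being guaranteed by the grading and connectedness of $\mathcal{H}$), it commutes with both maps:
\[
\overline{\log\psi} = \log\bar\psi, \qquad \overline{\exp\alpha} = \exp\bar\alpha.
\]

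Next I would translate the oddness condition. By definition $\psi$ is odd iff $\bar\psi = \psi^{-1}$. Applying $\log$, using the display above together with $\log(\psi^{-1}) = -\log\psi$, yields $\bar\alpha = \overline{\log\psi} = \log\bar\psi = -\alpha$; the converse follows by exponentiating. The conclusion is then immediate: for any $q\in\mathbb{Q}$,
\[
\overline{\psi^q} = \overline{\exp(q\alpha)} = \exp(q\bar\alpha) = \exp(-q\alpha) = \left(\exp(q\alpha)\right)^{-1} = (\psi^q)^{-1},
\]
where the penultimate step uses that $\exp(q\alpha)$ and $\exp(-q\alpha)$ commute, being exponentials of scalar multiples of the same infinitesimal character, so that their convolution is $\exp(0)=\varepsilon$. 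Hence $\psi^q$ is odd. Equivalently, this packages into the two compatibilities $\overline{\psi^q} = (\bar\psi)^q$ and $(\psi^q)^{-1} = (\psi^{-1})^q$, so that applying $(\,\cdot\,)^q$ to $\bar\psi = \psi^{-1}$ gives the claim directly.

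The one genuinely nonroutine ingredient is the interchange $\overline{\exp\alpha} = \exp\bar\alpha$ (equivalently $\overline{\log\psi}=\log\bar\psi$), which is where care is needed: it rests on the involution being a homomorphism for the \emph{non-cocommutative} convolution product, and hence ultimately on $\Theta$ being a coalgebra endomorphism of $\mathcal{H}$. Once that compatibility is secured, the remainder is formal manipulation inside the pro-unipotent group of characters, and no computation with the explicit recursion \eqref{eq:square_root_recursion} is required.
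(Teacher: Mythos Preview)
Your argument is correct, but it takes a different route from the paper's. The paper's proof is a three-line application of the \emph{uniqueness} part of Theorem \ref{thm:hopf_roots}: writing $q=m/n$ and $\phi=\psi^q$, one observes that both $\phi^{-1}$ and $\bar\phi$ are characters whose $n$-th convolution power equals $\psi^{-m}$ (using $\bar\psi=\psi^{-1}$ and multiplicativity of the involution), and uniqueness of the $n$-th root forces $\bar\phi=\phi^{-1}$. No exponentials or logarithms appear explicitly.

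Your approach instead linearises the problem: you transport oddness to the infinitesimal level via $\alpha=\log\psi$, characterise it as $\bar\alpha=-\alpha$, and note that this condition is trivially preserved under $\alpha\mapsto q\alpha$. This is more conceptual and makes the result transparent for arbitrary scalar $q$ (not just rationals), at the cost of having to verify the compatibility $\overline{\exp\alpha}=\exp\bar\alpha$, which in turn rests on $\Theta$ being a coalgebra morphism. The paper's proof is shorter and treats Theorem \ref{thm:hopf_roots} as a black box, while yours unpacks the construction and exhibits the underlying Lie-theoretic reason the statement holds.
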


\begin{proof}
    Let $q = m/n$. Then $\phi = \psi^q$ is the unique character satisfying $\phi^n = \psi^m$. Then since $\psi$ is odd,
    \begin{align*}
        \psi^{-m} &= (\phi^{-1})^n\\
        \psi^{-m} &= \bar{\psi}^m = \bar{\phi}^n
    \end{align*}
    so $\phi^{-1}$, $\bar{\phi}$ are both $n^{th}$ roots of $\psi^{-m}$. Thus $\bar{\phi} = \phi^{-1}$ by uniqueness of the $n^{th}$ root.
\end{proof}

We now apply our results to the Butcher--Connes--Kreimer Hopf algebra. Having derived an explicit expression for the rational power of a character, we now present a closed formula for the method $\psi^-$ in terms of $\psi$, as stated in the following corollary.

\begin{corollary} 
\label{cor:tree_minus}
    Let ${(\cdot)}^-:\mathcal{H} \to \mathcal{H}$ and ${(\cdot)}^+:\mathcal{H} \to \mathcal{H}$ be the maps defined for any $\tau \in \mathcal{H}$ by
    \begin{align*}
        \tau^- &= \mu\circ(\bar{S}\otimes \mathrm{Id})\circ\Delta \circ \mathrm{Id}^{1/2}(\tau),\\
        \tau^+ &= \mu\circ(\mathrm{Id} \otimes (\cdot)^- \circ S)\circ\Delta (\tau),
    \end{align*}
    where $\bar{S}(\tau) = (-1)^{|\tau|} S(\tau)$. Then for any B-series method $\Psi$,
    \begin{align*}
        \psi^-(\tau) &= \psi(\tau^-),\\
        \psi^+(\tau) &= \psi(\tau^+).
    \end{align*}
\end{corollary}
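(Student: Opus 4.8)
The plan is to reduce both identities to a single transfer principle: for any character $\zeta$ and any $q \in \mathbb{Q}$, the convolution power satisfies $\zeta^q = \zeta \circ \mathrm{Id}^q$, where on the right $\mathrm{Id}^q \in \operatorname{End}(\mathcal{H})$ is the rational power of the identity endomorphism furnished by Theorem \ref{thm:hopf_roots}. To establish this I would embed characters into $\operatorname{End}(\mathcal{H})$ via $\hat\zeta(x) := \zeta(x)\nu$, which lands in $\mathcal{H}_0 = \mathbb{R}\nu$. A direct check using multiplicativity of $\zeta$ shows that $\hat\zeta$ is a multiplicative endomorphism and that hatting intertwines the two convolution products, $\widehat{\zeta_1 \zeta_2} = \hat\zeta_1 \hat\zeta_2$ (the only point being that $\mu(\nu\otimes\nu)=\nu$). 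Since this hat map is a graded algebra homomorphism it commutes with $\exp$ and $\log$, hence with rational powers, so $\widehat{\zeta^q} = (\hat\zeta)^q$. The identity $(\hat\zeta)^q = \hat\zeta \circ \mathrm{Id}^q$ proved inside Theorem \ref{thm:hopf_roots} then descends, on evaluating at $\tau$ and reading off the coefficient of $\nu$, to $\zeta^q = \zeta \circ \mathrm{Id}^q$.

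With this in hand the first identity is short. I would start from the relation $(\psi^-)^2 = \psi^* \psi$ recorded before Corollary \ref{cor:RK_decomp}; since $\psi^-$ is a character and square roots of characters are unique (Theorem \ref{thm:hopf_roots} with $q=1/2$, uniqueness transported through the injective hat embedding), this gives $\psi^- = (\psi^* \psi)^{1/2}$. Next I rewrite the adjoint as a precomposition: from $\psi^*(\tau) = (-1)^{|\tau|}\psi(S\tau)$ and the definition $\bar{S}(\tau) = (-1)^{|\tau|}S(\tau)$ (using that $S$ preserves the grading) we get $\psi^* = \psi \circ \bar{S}$. Using multiplicativity of $\psi$ in the form $\mu_\mathbb{R} \circ (\psi \otimes \psi) = \psi \circ \mu$, the convolution $\psi^*\psi$ becomes
\begin{equation*}
\psi^*\psi = \mu_\mathbb{R}\circ(\psi\circ\bar{S} \otimes \psi)\circ\Delta = \psi\circ\mu\circ(\bar{S}\otimes\mathrm{Id})\circ\Delta.
\end{equation*}
Applying the transfer principle with $q = 1/2$ then yields $\psi^- = (\psi^*\psi)\circ\mathrm{Id}^{1/2} = \psi\circ\mu\circ(\bar{S}\otimes\mathrm{Id})\circ\Delta\circ\mathrm{Id}^{1/2} = \psi\circ(\cdot)^-$, which is precisely $\psi^-(\tau) = \psi(\tau^-)$.

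For the second identity I would use $\psi^+ = \psi(\psi^-)^{-1}$, which follows by right-multiplying the decomposition $\psi = \psi^+\psi^-$ of Theorem \ref{thm:odd_even_characters}. The Butcher-group inverse is composition with the antipode, so $(\psi^-)^{-1} = \psi^- \circ S = \psi\circ(\cdot)^-\circ S$ by the first identity. Expanding the convolution and again invoking $\mu_\mathbb{R}\circ(\psi\otimes\psi) = \psi\circ\mu$ gives
\begin{equation*}
\psi^+ = \mu_\mathbb{R}\circ\big(\psi \otimes (\psi\circ(\cdot)^-\circ S)\big)\circ\Delta = \psi\circ\mu\circ(\mathrm{Id}\otimes(\cdot)^-\circ S)\circ\Delta = \psi\circ(\cdot)^+,
\end{equation*}
i.e., $\psi^+(\tau) = \psi(\tau^+)$.

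The main obstacle is the transfer principle of the first paragraph: one must be careful that the convolution product defining powers of the character $\psi$ uses multiplication $\mu_\mathbb{R}$ in $\mathbb{R}$, whereas the rational power $\mathrm{Id}^{1/2}$ and the tree maps $(\cdot)^\pm$ live in the endomorphism convolution algebra built from $\mu$. The hat embedding is exactly what reconciles these two products, and checking that it is an algebra homomorphism (so that it commutes with $\exp$, $\log$, and hence with extraction of rational powers) is the single place where multiplicativity must be used with care. Everything afterwards is a bookkeeping exercise in pushing $\psi$ through $\mu$ via $\mu_\mathbb{R}\circ(\psi\otimes\psi)=\psi\circ\mu$.
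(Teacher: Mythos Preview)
Your argument is correct and follows the same route as the paper's proof, which simply cites the identities $(\psi^-)^2=\psi^*\psi$, $\psi^+=\psi(\psi^-)^{-1}$, and Theorem~\ref{thm:hopf_roots}. The only extra work you do is spelling out the transfer principle $\zeta^q=\zeta\circ\mathrm{Id}^q$ for characters via the embedding $\hat\zeta(x)=\zeta(x)\nu$; the paper instead handles this implicitly by declaring at the start of Section~\ref{sec:rationalpowers} that characters are included in $\operatorname{End}(\mathcal{H})$ through $\mathbb{R}\nu=\mathcal{H}_0\subset\mathcal{H}$, so that Theorem~\ref{thm:hopf_roots} (in whose proof the identity $\psi^q=\psi\circ\mathrm{Id}^q$ already appears) applies directly.
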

\begin{proof}
    The result follows immediately from the identities $(\psi^-)^2 = \psi^* \psi$ and $\psi^+ = \psi (\psi^-)^{-1}$, and Theorem \ref{thm:hopf_roots}.
\end{proof}

\begin{remark}
Corollary \ref{cor:tree_minus} shows that the odd-even decomposition of characters can be understood entirely at the level of trees. Indeed, it is easy to see that the maps $(\cdot)^-$ and $(\cdot)^+$ induce an \say{odd-even}-type decomposition at the level of rooted trees, since
\begin{equation*}
     (\cdot)^+ (\cdot)^- = \mathrm{Id}.
\end{equation*}
\end{remark}

\begin{remark}
    In practice, it is simpler to solve the recurrence \eqref{eq:square_root_recursion}, given in this case by
    \begin{align*}
        \phi(\emptyset) &= \emptyset,\\
        \phi(\tau) &= \frac{1}{2}\left[\mu \circ(\bar{S} \otimes \text{Id}) \circ\Delta(\tau) - \phi\circ\mu\circ\delta(\tau)\right]
    \end{align*}
    and set $\tau^- = \phi(\tau)$.
\end{remark}

\begin{table}[H]
    \centering
    {\renewcommand{\arraystretch}{1.7}
  \begin{tabular}{c|c|c|c}
    $\tau$ & $\mathrm{Id}^{1/2}(\tau)$ & $\tau^-$ & $\tau^+$\\
    \cline{1-4}
    \RS{n} & $\frac{1}{2}\,\RS{n}$ & $\RS{n}$ & $0$\\
    \RS{ni} & $\frac{1}{2}\,\RS{ni} - \frac{1}{8}\,\RS{n}$ & $\frac{1}{2}\,\RS{n}$ & $\RS{ni} - \frac{1}{2}\,\RS{n}$\\
    \RS{nlr} & $\frac{1}{2}\,\RS{nlr} - \frac{1}{4}\,\RS{ni}$ & $\RS{nlr}$ & $0$ \\
    \RS{nIi} & $\frac{1}{2}\,\RS{nIi} - \frac{1}{4}\,\RS{ni} + \frac{1}{16}\,\RS{n}$ & $\RS{nIi} - \RS{ni} + \frac{1}{2}\,\RS{n}$ & $0$\\
    \RS{nlir} & $\frac{1}{2}\,\RS{nlir} - \frac{3}{8}\,\RS{nlr} + \frac{1}{64}\,\RS{n}$ & $\frac{3}{2}\,\RS{nlr} - \frac{1}{4}\,\RS{n}$ & $\RS{nlir} - \frac{3}{2}\,\RS{nlr} + \frac{1}{4}\,\RS{n}$\\
    \RS{nlRi} & $\frac{1}{2}\,\RS{nlRi} - \frac{1}{8}\,\RS{ni}\,\RS{ni} - \frac{1}{8}\,\RS{nIi} - \frac{1}{8}\,\RS{nlr} + \frac{1}{16}\,\RS{ni} + \frac{1}{128}\,\RS{n} $ & $\frac{1}{2}\,\RS{nIi} + \frac{1}{2}\,\RS{nlr} - \frac{1}{2}\,\RS{ni} + \frac{1}{8}\,\RS{n}$ & $\RS{nlRi} - \frac{1}{2}\,\RS{nIi} - \frac{1}{2}\,\RS{nlr} + \frac{1}{8}\,\RS{n}$\\
    \RS{nIlr} & $\frac{1}{2}\,\RS{nIlr} - \frac{1}{4}\,\RS{nIi} - \frac{1}{8}\,\RS{nlr} + \frac{1}{8}\,\RS{ni} - \frac{1}{64}\,\RS{n}$ & $\RS{nIi} + \frac{1}{2}\,\RS{nlr} - \RS{ni} + \frac{1}{4}\,\RS{n}$ & $\RS{nIlr} - \RS{nIi} - \frac{1}{2}\,\RS{nlr} + \RS{ni} - \frac{1}{4}\,\RS{n}$\\
    \RS{nIIi} & $\frac{1}{2}\,\RS{nIIi} - \frac{1}{8}\,\RS{ni}\,\RS{ni} - \frac{1}{4}\,\RS{nIi} + \frac{3}{16}\,\RS{ni} - \frac{5}{128}\,\RS{n}$ & $\RS{nIi} - \RS{ni} + \frac{3}{8}\,\RS{n}$ & $\RS{nIIi} - \RS{nIi} + \frac{1}{2}\,\RS{ni} - \frac{1}{8}\,\RS{n}$
  \end{tabular}}
  \caption{$\bullet$\,-\,reduced values of $\mathrm{Id}^{1/2}(\tau)$, $\tau^-$ and $\tau^+$ for $|\tau| \leq 4$.}
\end{table}

\begin{proposition}\label{prop:plus_zero}
    $\tau^+ = 0$ for all $\tau$ with $|\tau|$ odd.
\end{proposition}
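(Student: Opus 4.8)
The plan is to transfer the statement from the combinatorial map $(\cdot)^+$ on $\mathcal{H}$ to the \emph{even} character $\psi^+$ via Corollary~\ref{cor:tree_minus}, exploit the defining parity property of even characters to show that $\psi^+$ annihilates every odd-degree tree, and then deduce that the fixed element $\tau^+$ itself vanishes by appealing to the fact that characters separate points of $\mathcal{H}$.

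First I would recall from Theorem~\ref{thm:odd_even_characters} that for every character $\psi$, the factor $\psi^+$ in the decomposition $\psi = \psi^+ \psi^-$ is an even character, meaning $\overline{\psi^+} = \psi^+$. Unwinding the definition of the involution gives $\overline{\psi^+}(\tau) = (-1)^{|\tau|}\psi^+(\tau)$, so evenness reads $(-1)^{|\tau|}\psi^+(\tau) = \psi^+(\tau)$. When $|\tau|$ is odd this forces $\psi^+(\tau) = 0$. Hence every even character kills odd-degree trees.

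Next, by Corollary~\ref{cor:tree_minus} we have $\psi^+(\tau) = \psi(\tau^+)$ for every B-series method $\Psi$ with character $\psi$. Combining this with the previous step yields $\psi(\tau^+) = 0$ for \emph{every} character $\psi$ whenever $|\tau|$ is odd. Here $\tau^+$ is a single fixed element of $\mathcal{H}$ independent of $\psi$ (indeed homogeneous of degree $|\tau|$, since $\Delta$, $S$, $\mu$ and the dilation $\mathrm{Id}^{1/2}$ all preserve the grading), so the vanishing of $\psi(\tau^+)$ is a statement about this element tested against all characters.

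The final step, which I expect to be the main obstacle to state cleanly, is to conclude from ``$\psi(\tau^+)=0$ for all characters $\psi$'' that $\tau^+ = 0$. Since $\mathcal{H}$ is the free commutative algebra on the set of trees $\mathcal{T}$, a character is precisely a choice of real value on each tree, extended multiplicatively; writing $\tau^+ = \sum_F c_F F$ as a finite linear combination of forest monomials, $\psi(\tau^+)$ becomes a polynomial in the indeterminates $\{\psi(\sigma) : \sigma \in \mathcal{T}\}$ whose distinct monomials correspond to distinct forests $F$. By Theorem~\ref{thm:rk_density} these values may be prescribed arbitrarily (even by genuine Runge--Kutta schemes), so the polynomial vanishes identically, forcing every $c_F = 0$ and hence $\tau^+ = 0$. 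Everything else is an immediate consequence of the evenness of $\psi^+$; the only point requiring care is this separation argument, i.e.\ that a point-separating family of characters suffices to detect the vanishing of the element $\tau^+$.
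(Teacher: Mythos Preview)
Your proposal is correct and follows essentially the same route as the paper: both argue that evenness of $\psi^+$ forces $\psi^+(\tau)=0$ for odd $|\tau|$, then invoke Corollary~\ref{cor:tree_minus} to rewrite this as $\psi(\tau^+)=0$ for every character $\psi$, and conclude $\tau^+=0$. The paper compresses the final separation step into the single clause ``which implies $-\tau^+=\tau^+=0$'', whereas you spell out why characters on the free commutative algebra $\mathcal{H}$ detect vanishing; your use of Theorem~\ref{thm:rk_density} is valid but more than needed, since any map $\mathcal{T}\to\mathbb{R}$ already extends to a character.
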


\begin{proof}
    Let $\psi$ be any character on $\mathcal{H}$. Since $\psi^+$ is an even character, we have $\overline{\psi^+} = \psi^+$. Thus, for $\tau$ with $|\tau|$ odd, $-\psi^+(\tau) = \psi^+(\tau)$, which implies $-\tau^+ = \tau^+ = 0$.
\end{proof}

A simple consequence of the square root construction is that any symmetric scheme can be written in the canonical form given by Proposition \ref{prop:composition}, as shown by the following theorem.

\begin{theorem} \label{thm:main}
    A B-series method $\Psi$ is symmetric if and only if there exists a B-series method $\Omega$ such that $\Psi = \Omega \circ \Omega^*$.
\end{theorem}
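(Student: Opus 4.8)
The plan is to work entirely at the level of characters, translating the geometric statement $\Psi = \Omega \circ \Omega^*$ into the algebraic identity $\psi = \omega^* \omega$ via the composition law for B-series, and then to reduce symmetry to the oddness of the associated character through Corollary \ref{cor:symmetric_weights}. In particular, I would never argue about the maps $\Psi$, $\Omega$ directly, only about their elementary weights functions.

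For the ($\Leftarrow$) direction, suppose $\Psi = \Omega \circ \Omega^*$. Applying $\Omega^*$ first and $\Omega$ second, the composition theorem for B-series gives that the character of $\Psi$ is $\psi = \omega^* \omega$. This is exactly the canonical construction $\nu := \zeta^* \zeta$ of an odd character recorded after the definition of odd and even characters; concretely, using $\zeta^* = \bar\zeta^{-1}$ from Theorem \ref{thm:psi_adj_weights} together with the multiplicativity of the bar involution, one checks $\overline{\omega^*\omega} = \omega^{-1}\bar\omega = (\omega^*\omega)^{-1}$, so $\psi$ is odd. Corollary \ref{cor:symmetric_weights} then yields that $\Psi$ is symmetric.

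For the ($\Rightarrow$) direction, suppose $\Psi$ is symmetric, so by Corollary \ref{cor:symmetric_weights} the character $\psi$ is odd. I would then set $\omega := \psi^{1/2}$, the unique multiplicative square root furnished by Theorem \ref{thm:hopf_roots} (applicable since $\mathcal{H}$ is graded, connected, and commutative). By Proposition \ref{prop:odd_roots} the map $\omega$ is again odd, and oddness forces $\omega^* = \bar\omega^{-1} = (\omega^{-1})^{-1} = \omega$. Hence $\omega^*\omega = \omega^2 = \psi$, and the B-series method $\Omega$ with character $\omega$ satisfies $\Psi = \Omega \circ \Omega^*$ by the composition theorem.

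The individual steps are short, so the main issue is bookkeeping rather than a genuine obstacle. First, one must be careful about the order of composition, i.e.\ which of $\omega$, $\omega^*$ appears first in the convolution product $\omega^* \omega$; here the symmetry of the situation ($\omega^* = \omega$ in the backward direction) makes the order irrelevant, but it should still be tracked. Second, one must justify that $\omega = \psi^{1/2}$ is a bona fide character — multiplicative, hence a genuine element of the Butcher group defining a B-series method — which is precisely the content of Theorem \ref{thm:hopf_roots}. The only conceptual input is that taking rational powers preserves oddness (Proposition \ref{prop:odd_roots}); this collapses the adjoint $\omega^*$ onto $\omega$ and makes $\Omega \circ \Omega^* = \Omega \circ \Omega$, so that $\Omega$ is in fact the symmetric \emph{square root} of $\Psi$.
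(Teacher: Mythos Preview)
Your proposal is correct and follows essentially the same route as the paper: take the square root $\omega=\psi^{1/2}$ via Theorem~\ref{thm:hopf_roots}, invoke Proposition~\ref{prop:odd_roots} to see that $\omega$ is odd (hence $\omega^*=\omega$), and conclude $\omega^*\omega=\omega^2=\psi$. The only cosmetic difference is that for the converse the paper simply says ``clearly $\Psi$ is symmetric'' (since $(\Omega\circ\Omega^*)^*=\Omega\circ\Omega^*$), whereas you verify oddness of $\omega^*\omega$ explicitly; both are fine.
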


\begin{proof}
    If $\Psi$ is a symmetric B-series method, then $\psi$ is an odd character on $\mathcal{H}$. By Theorem \ref{thm:hopf_roots}, $\psi$ has a unique square root $\psi^{1/2}$, and by Proposition \ref{prop:odd_roots}, $\psi^{1/2}$ is an odd character. Then, by definition, $\Psi = \Omega \circ \Omega^*$ where $\Omega = \Psi^{1/2}$ is the symmetric B-series method corresponding to the odd character $\psi^{1/2}$. Conversely, if there exists a method $\Omega$ such that $\Psi = \Omega \circ \Omega^*$, then clearly $\Psi$ is symmetric.
\end{proof}

%%%%%%%%%%%%%%%%%%%%%%%%%%%%%%%%%%%%%%%%%%%%%
\section{An Equivalence Structure}
\label{sec:equivalence}
%%%%%%%%%%%%%%%%%%%%%%%%%%%%%%%%%%%%%%%%%%%%%

Several notions of equivalence between RK schemes have been introduced in the literature. The most common are equivalence of numerical solutions, equivalence of elementary weight functions and so-called \say{P-equivalence} concerning different parameterisations of a given scheme. All three of these notions can be shown to be equivalent to each other \cite[Theorem 381H]{butcher2016numerical}. Since the notions are identical, it will be convenient for us to define two RK schemes $\Psi, \Phi$ as \say{equivalent} if their elementary weight functions match, that is $\psi = \phi$.\par\medskip

In addition to these three notions, we introduce a new concept of equivalence concerning the symmetric component of a scheme, which we will refer to as S-equivalence. We define this notion as an equivalence on characters over $\mathcal{H}$, and extend it naturally to B-series methods.

\begin{definition}
Given two characters $\zeta, \xi$ on $\mathcal{H}$, we say they are S-equivalent if $\zeta^- = \xi^-$. Similarly, given two B-series methods $\Psi, \Phi$, we say they are S-equivalent if their characters are S-equivalent. The corresponding equivalence classes of characters are denoted by $[\cdot]_S$.
\end{definition}

Since $\psi^*\psi = (\psi^-)^2$ for any character $\psi$, the resulting equivalence relation is precisely the kernel of the function mapping $\psi \mapsto \psi^*\psi$ on characters on $\mathcal{H}$. Several natural questions arise when considering S-equivalence. Given the various existing notions of equivalence for Runge--Kutta schemes, one may ask whether S-equivalence, when restricted to this class, defines a genuinely distinct relation. In particular, whether two distinct Runge–-Kutta methods can be S-equivalent. Indeed, we will show that if an S-equivalence class contains a (consistent) RK character, then it contains infinitely many.

\begin{definition}
    For a character $\phi$ on $\mathcal{H}$, let $n(\phi)$ be the number of consistent RK characters in $[\phi]_S$.
\end{definition}

\begin{theorem}
The only values $n(\cdot)$ can take are $0$ and $\infty$, and both of these values are attained.
\end{theorem}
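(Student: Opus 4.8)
The plan is to prove the dichotomy by showing that if an S-equivalence class contains even a single consistent Runge--Kutta character then it must contain infinitely many, and then to exhibit explicit classes realising $n=0$ and $n=\infty$. The starting point is to pin down the class. By Theorem~\ref{thm:odd_even_characters} every character factors uniquely as $\zeta=\zeta^+\zeta^-$ with $\zeta^+$ even and $\zeta^-$ odd, and since $(\psi^-)^2=\psi^*\psi$ the relation $\phi\sim_S\psi$ is equivalent to $\phi^-=\psi^-$. Writing $\phi=\phi^+\psi^-$ then shows that $[\psi]_S$ is exactly the left-translate $\{\eta\,\psi:\eta\text{ an even character}\}$ of $\psi$ by the subgroup of even characters, which is closed under the convolution product and inversion because $\overline{\eta_1\eta_2}=\bar\eta_1\bar\eta_2$.

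Given one consistent RK character $\psi$ in the class, I would manufacture new members by left-multiplying with even RK characters. If $\theta$ is even, then $\theta\psi=(\theta\psi^+)\psi^-$ with $\theta\psi^+$ even, so by uniqueness of the factorisation $(\theta\psi)^-=\psi^-$, i.e.\ $\theta\psi\sim_S\psi$. Moreover $\theta\psi$ is again a consistent RK character: it is RK because the convolution product of RK characters corresponds to composing the two RK methods, which is again an RK method; and it is consistent because $\Delta(\RS{n})=\RS{n}\otimes\emptyset+\emptyset\otimes\RS{n}$ gives $(\theta\psi)(\RS{n})=\theta(\RS{n})+\psi(\RS{n})$, with $\theta(\RS{n})=0$ forced by evenness and $\psi(\RS{n})=1$ by \eqref{eq:RK_assumption1}. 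Since right-translation by $\psi$ is injective, distinct even $\theta$ yield distinct members, so it remains only to produce infinitely many distinct even RK characters.

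This last step is the crux. I would construct an explicit one-parameter family of genuinely even RK methods. For $\beta\in\mathbb{R}$ consider the two-stage method with $a_{11}=\beta$, $a_{22}=-\beta$, $a_{12}=a_{21}=0$, and weights $b_1=\tfrac12$, $b_2=-\tfrac12$. The key observation is that under $h\mapsto-h$ the stage equations $k_1=f(y_0+h\beta k_1)$ and $k_2=f(y_0-h\beta k_2)$ are interchanged, so that $k_1(-h)=k_2(h)$ and $k_2(-h)=k_1(h)$, whence the increment $\tfrac h2(k_1-k_2)$ is invariant and the method satisfies $\Theta_{-h}=\Theta_h$; equivalently its character $\theta_\beta$ is even. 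These are pairwise distinct since $\theta_\beta(\RS{ni})=b_1c_1+b_2c_2=\beta$, so $\{\theta_\beta\}_{\beta\in\mathbb{R}}$ is an infinite family of even RK characters and $\{\theta_\beta\psi\}_\beta$ is an infinite family of consistent RK characters in $[\psi]_S$. This forces $n(\psi)=\infty$ whenever $n(\psi)\geq1$, establishing the dichotomy.

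Finally I would attain both values. For $n=\infty$ take $\psi$ to be the consistent character of the explicit Euler method, so the class contains a consistent RK character and hence infinitely many by the above. For $n=0$ take $\phi=\varepsilon$: its class $[\varepsilon]_S$ consists precisely of the even characters (those with $\zeta^*\zeta=\varepsilon$, i.e.\ $\bar\zeta=\zeta$), and none can be consistent, since evenness forces $\zeta(\RS{n})=0$ while consistency demands $\zeta(\RS{n})=1$; thus $n(\varepsilon)=0$. The one delicate point in the whole argument is the explicit even family, together with the verification that these RK methods are \emph{exactly} even rather than merely even to leading order in $h$; everything else reduces to bookkeeping with the unique odd--even factorisation and the closure of RK methods under composition.
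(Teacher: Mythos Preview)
Your proposal is correct and follows essentially the same strategy as the paper: manufacture a one-parameter family of even (antisymmetric) Runge--Kutta characters, left-multiply a given consistent RK character by them to obtain infinitely many S-equivalent consistent RK characters, and realise $n=0$ via $[\varepsilon]_S$ and $n=\infty$ via any RK class. The only cosmetic differences are that the paper uses the slightly different even family $a_{11}=a_{12}=\lambda$, $a_{21}=a_{22}=-\lambda$, $b_1=\lambda$, $b_2=-\lambda$ (rather than your diagonal family with weights $\pm\tfrac12$), and additionally offers an alternative $n=0$ example by taking $[\phi^{1/2}]_S$ for any odd non-RK character $\phi$; your explicit coset description $[\psi]_S=\{\eta\psi:\eta\text{ even}\}$ and the direct $h\mapsto-h$ verification of evenness are nice touches the paper leaves implicit.
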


\begin{proof}
    Let $\phi$ be an odd character and suppose $[\phi]_S$ contains a consistent RK character $\psi$. For $\lambda \in \mathbb{R}_{>0}$, consider the RK scheme $\Omega_\lambda$ with Butcher tableau
    \begin{equation*}
        \begin{array}{c|cc}
        2\lambda & \lambda & \lambda\\
        -2\lambda & -\lambda & -\lambda\\
        \hline
        & \lambda & -\lambda
        \end{array}
    \end{equation*}
    and denote the corresponding character by $\omega_\lambda$. It is easy to see that $\Omega_\lambda$ is an antisymmetric RK scheme and is not equivalent to the identity scheme. Define the modified RK scheme
    \begin{equation*}
        \Psi_\lambda = \Psi \circ \Omega_\lambda.
    \end{equation*}
    Clearly, the schemes $\Psi_\lambda$ are distinct for distinct choices of $\lambda$, and are consistent. Moreover, the character of $\Psi_\lambda$ is given by
    \begin{equation*}
        \omega_\lambda \psi = (\omega_\lambda \psi^+) \phi \in [\phi]_S, \quad \forall \lambda \in \mathbb{R}_{>0}
    \end{equation*}
    since $\omega_\lambda$ is an odd character. Thus if $n(\phi) > 0$, then $n(\phi) = \infty$. It remains to show that $0$ and $\infty$ are attained by $n(\cdot)$. Clearly $\infty$ is attained by taking the equivalence class of any RK scheme. For the case $n(\cdot) = 0$, one may note that $[\varepsilon]_S$ is precisely the set of all even characters, and so cannot contain a consistent RK character. Alternatively, let $\phi$ be any odd character that is not an RK character. By Proposition \ref{prop:odd_roots}, $\phi^{1/2}$ is also an odd character. Suppose that there is a RK character $\psi \in [\phi^{1/2}]_S$. Then it must be the case that $\psi^* \psi = \phi^{1/2} \phi^{1/2} = \phi$. But $\psi^* \psi$ is a RK character, whilst $\phi$ is not, which is a contradiction. Thus $n(\phi^{1/2}) = 0$. 
\end{proof}

Another natural question is that of the order of methods within an S-equivalence class. As we will see, the order of a method is bounded by the order of its symmetric component, and so the order of any method in an S-equivalence class is at most the order of the odd element of that class.

\begin{definition}
    Given a character $\psi$, let
    \begin{align*}
        \ord(\psi) &= \sup\{n \in \mathbb{N} : \psi(\tau) = 1 / \tau! \quad \forall |\tau| \leq n\}\\
        \ord([\psi]_S) &= \sup_{\zeta \in [\psi]_S} \ord(\zeta)
    \end{align*}
\end{definition}

\begin{proposition}\label{prop:ord} Given a character $\psi$ and $q \in \mathbb{Q}$, let $\psi_q$ be the character given by $\psi_q(\tau) = q^{-|\tau|} \psi^q(\tau)$. Then:
\begin{enumerate}
    \item $\ord(\psi) = \ord(\psi^*)$
    \item $\ord(\psi_q) = \ord(\psi)$ for all $q \in \mathbb{Q}$, $q > 0$
    \item $\ord((\psi_1 \psi_2 \cdots \psi_n)^{1/n}) \geq \min\limits_{i=1,\ldots,n}(\ord(\psi_i))$
\end{enumerate}
\end{proposition}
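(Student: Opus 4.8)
The plan is to reduce all three statements to a single principle: each operation involved \emph{fixes the exact character} $a$, where $a(\tau)=1/\tau!$, and is \emph{graded-continuous}, meaning its value on degree $m$ is determined by the input(s) in degrees $\leq m$. Write $\psi\equiv_p\phi$ when $\psi$ and $\phi$ agree on every tree with at most $p$ nodes; by Theorem \ref{thm:order_p_weights} and multiplicativity this is the same as $\ord(\psi)\geq p\iff\psi\equiv_p a$. For any graded-continuous operation $T$ one has $\psi\equiv_p\phi\Rightarrow T\psi\equiv_p T\phi$, so if in addition $Ta=a$ then $\ord(\psi)\geq p\Rightarrow\ord(T\psi)\geq p$; i.e. $T$ never lowers the order. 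The basic operations are all graded-continuous because $\Delta$, $S$, $\log$ and $\exp$ respect the grading, so the only substantive inputs are the fixed-point identities for $a$.

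For (1), since $\psi^*(\tau)=(-1)^{|\tau|}\psi(S\tau)$ and $S$ is grading-preserving, the adjoint is graded-continuous. The exact flow is a one-parameter group with $\Phi_{-s}=\Phi_s^{-1}$, hence symmetric, so by Corollary \ref{cor:symmetric_weights} its character satisfies $a^*=a$. This gives $\ord(\psi^*)\geq\ord(\psi)$; applying the same to $\psi^*$ and using the involutivity $\psi^{**}=\psi$ yields equality. For (2), write $\Theta_q$ for the dilation $\Theta_q(\zeta)(\tau)=q^{|\tau|}\zeta(\tau)$, so that $\psi_q=\Theta_{1/q}(\psi^q)$. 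Because $\Delta$ is graded, $\Theta_q$ is a group automorphism of characters that commutes with rational powers, and both $\Theta_q$ and $\zeta\mapsto\zeta^q$ (via the $\exp$--$\log$ formula of Theorem \ref{thm:hopf_roots}) are graded-continuous. The key point is $a_q=a$: the one-parameter-group structure forces $a^q$ to be the fractional-time flow, which by the step-size rescaling of B-series equals $\Theta_q a$, so $a^q(\tau)=q^{|\tau|}/\tau!$ and hence $a_q(\tau)=q^{-|\tau|}q^{|\tau|}/\tau!=a(\tau)$; I would justify $a^q=\Theta_q a$ rigorously by checking $(\Theta_q a)^n=a^m$ for $q=m/n$ and invoking uniqueness of roots. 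Order-preservation then gives $\ord(\psi_q)\geq\ord(\psi)$, and the reverse follows by applying the bound with exponent $1/q>0$ after verifying $(\psi_q)_{1/q}=\Theta_q\big((\psi_q)^{1/q}\big)=\Theta_q\Theta_{1/q}(\psi)=\psi$.

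For (3), set $p=\min_i\ord(\psi_i)$, so that $\psi_i\equiv_p a$ for every $i$. Graded-continuity of the convolution product gives $\psi_1\psi_2\cdots\psi_n\equiv_p a^n$, and graded-continuity of the $1/n$-th power together with $(a^n)^{1/n}=a$ (uniqueness of roots, Theorem \ref{thm:hopf_roots}) gives $(\psi_1\psi_2\cdots\psi_n)^{1/n}\equiv_p a$. Hence $\ord\big((\psi_1\psi_2\cdots\psi_n)^{1/n}\big)\geq p$. Only an inequality is claimed, as expected, since cancellation among the leading error terms of the factors can strictly raise the order.

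The step I expect to be the main obstacle is establishing the two fixed-point identities $a^*=a$ and $a^q=\Theta_q a$ (equivalently $a_q=a$), which encode the time-reversibility and the one-parameter-group structure of the exact flow respectively; these are exactly what let graded-continuity translate into order-preservation. The remaining ingredients---graded-continuity of products, inverses, adjoints, dilations and rational powers, the involutivity $\psi^{**}=\psi$, and the inversion identity $(\psi_q)_{1/q}=\psi$ used to upgrade the one-sided bounds in (1) and (2) to equalities---are routine bookkeeping with graded components.
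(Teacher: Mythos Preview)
Your proposal is correct and follows the same underlying principle as the paper: each operation is graded-continuous and fixes the exact character $a$, so it cannot lower the order. Parts (1) and (3) match the paper's argument almost exactly---the paper invokes $a^*=a$ and the fact that $S\tau$ and $\zeta^{1/n}(\tau)$ depend only on values at trees of size $\le|\tau|$, which is your graded-continuity in different words.

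For (2) the two arguments diverge slightly in organisation. The paper first establishes the integer case $\ord(\psi_n)=\ord(\psi)$ from $a_n=a$, and then reduces the rational case to it via the chain
\[
\ord(\psi_q)=\ord\bigl((\psi_q)_n\bigr)=\ord\bigl((\psi^q\circ c_q)^n\circ c_n\bigr)=\ord(\psi^m\circ c_m)=\ord(\psi_m)=\ord(\psi),
\]
using the identity $(\zeta\circ c_q)^n=\zeta^n\circ c_q$. You instead prove $a_q=a$ directly for all rational $q$ (via $(\Theta_q a)^n=a^m$ and uniqueness of roots), obtain $\ord(\psi_q)\ge\ord(\psi)$ by graded-continuity, and close the loop with the inversion $(\psi_q)_{1/q}=\psi$. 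Both routes ultimately rest on the integer identity $a^k=\Theta_k a$; your version packages the argument more uniformly and avoids the somewhat ad hoc chain of equalities, while the paper's version is more hands-on and makes the reduction to integers explicit. Neither approach gains or loses anything substantive.
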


\begin{proof} Let $a(\tau) = 1/\tau!$.\par\medskip
    \begin{enumerate}
        \item Since $a$ is the elementary weights function associated with the true solution of the ODE, it is clear that $a = a^*$. Let $\ord(\psi) = p$. Since $S\tau$ depends only on trees $\tau'$ with $|\tau'| \leq |\tau|$, it follows that for any $\tau$ such that $|\tau| \leq p$, we have
        \begin{equation*}
            1/\tau! = \psi(\tau) = a(\tau) = a^*(\tau) = \psi^*(\tau)
        \end{equation*}
        and so $\ord(\psi^*) \geq p = \ord(\psi)$. A similar argument shows that $\ord(\psi) \geq \ord(\psi^*)$.\par\medskip
        
        \item We first show the case $q = n \in \mathbb{Z}^+$. It is clear that, since $a$ is the elementary weights function of the exact solution of the ODE, we have
        \begin{equation*}
            B_h(a_n, f) = B_{n^{-1}h}(a^n, f) = B_h(a, f),
        \end{equation*}
        and so $a_n = a$. Thus if $\ord(\psi) = p$, we must have
        \begin{equation*}
            \psi_n(\tau) = a_n(\tau) = a(\tau) = \psi(\tau)
        \end{equation*}
        for all $|\tau| \leq p$. Thus, it is clear that we must have exactly $\ord(\psi_n) = p$.\par\medskip
        
        For $q \in \mathbb{Q}$, let $c_q$ be the map $c_q(\tau) = q^{-|\tau|} \tau$, such that $\psi_q = \psi^q \circ c_q$. It is easy to see from the definition of the coproduct $\Delta$ that for any character $\zeta$ and any $n \in \mathbb{Z}^+$,
        \begin{equation*}
            (\zeta \circ c_q)^n = \zeta^n \circ c_q.
        \end{equation*}
        It follows that
        \begin{align*}
            \ord(\psi_q) &= \ord((\psi_q)_n)\\
            &= \ord((\psi^q \circ c_q)^n \circ c_n)\\
            &= \ord((\psi^q)^n \circ c_q \circ c_n)\\
            &= \ord(\psi^m \circ c_m)\\
            &= \ord(\psi_m)\\
            &= \ord(\psi).
        \end{align*}\par\medskip
        
        \item Let $\min(\ord(\psi_i)) = p$. Recall from Theorem \ref{thm:hopf_roots} that for a character $\zeta$,  $\zeta^{1/n}(\tau)$ depends only on $\zeta(\tau')$ for $|\tau'| \leq |\tau|$, and the $n^{th}$ root is unique. It follows that for any $|\tau| \leq p$,
        \begin{align*}
            (\psi_1 \psi_2 \cdots \psi_n)^{1/n}(\tau) &= (a^n)^{1/n}(\tau)\\
            &= a(\tau)\\
            &= 1/\tau!
        \end{align*}
        and so $\ord((\psi_1 \psi_2 \cdots \psi_n)^{1/n}) \geq p$.
    \end{enumerate}
\end{proof}

\begin{proposition} \label{prop:equiv_class_order}
    For an odd character $\phi$, $\ord(\phi) = \ord([\phi]_S)$. That is, the order of any element of an $S$-equivalence class is at most the order of the symmetric element of that class.
\end{proposition}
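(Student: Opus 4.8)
The plan is to split the claimed equality into the two inequalities $\ord([\phi]_S)\ge\ord(\phi)$ and $\ord([\phi]_S)\le\ord(\phi)$. The first is essentially a tautology: since $\phi$ is odd, its even--odd decomposition is $\phi=\varepsilon\,\phi$, whence $\phi^-=\phi$ and $\phi$ itself belongs to $[\phi]_S$. As $\ord([\phi]_S)$ is a supremum over a set containing $\phi$, we immediately obtain $\ord([\phi]_S)\ge\ord(\phi)$.

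The content lies in the reverse inequality, for which I would show $\ord(\zeta)\le\ord(\phi)$ for \emph{every} $\zeta\in[\phi]_S$. First I would unpack the definition of $S$-equivalence: membership $\zeta\in[\phi]_S$ means $\zeta^-=\phi^-=\phi$. Next I would use the relation $(\zeta^-)^2=\zeta^*\zeta$ recorded before Corollary \ref{cor:RK_decomp}, together with the uniqueness of square roots from Theorem \ref{thm:hopf_roots}, to identify $\phi$ as the square root of $\zeta^*\zeta$; that is, $\phi=\zeta^-=(\zeta^*\zeta)^{1/2}$.

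With this identification in hand, the estimate follows directly from Proposition \ref{prop:ord}. Applying part (3) with $n=2$ and the choice $\psi_1=\zeta^*$, $\psi_2=\zeta$ gives
\begin{equation*}
\ord(\phi)=\ord\bigl((\zeta^*\zeta)^{1/2}\bigr)\ge\min\bigl(\ord(\zeta^*),\ord(\zeta)\bigr),
\end{equation*}
while part (1) supplies $\ord(\zeta^*)=\ord(\zeta)$. Hence $\ord(\phi)\ge\ord(\zeta)$ for each $\zeta\in[\phi]_S$, and taking the supremum yields $\ord([\phi]_S)\le\ord(\phi)$. Combined with the first paragraph this gives the equality.

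I do not expect any genuine obstacle, since all of the analytic work has already been isolated in Proposition \ref{prop:ord}. The only points demanding care are bookkeeping ones: verifying that $\zeta^-$ is legitimately \emph{the} square root of $\zeta^*\zeta$ (it is a character squaring to $\zeta^*\zeta$, and Theorem \ref{thm:hopf_roots} guarantees such a root is unique), and checking that Proposition \ref{prop:ord}(3) applies verbatim to the pair $\psi_1=\zeta^*,\psi_2=\zeta$ rather than only to the scaled characters appearing in its statement.
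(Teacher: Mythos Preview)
Your proposal is correct and follows essentially the same route as the paper: both arguments hinge on the identity $\phi=(\zeta^*\zeta)^{1/2}$ together with Proposition~\ref{prop:ord}(1) and (3), the only cosmetic difference being that the paper phrases the upper bound as a contradiction while you argue directly. Your closing worry is unnecessary: part (3) of Proposition~\ref{prop:ord} is stated for arbitrary characters $\psi_1,\dots,\psi_n$, not for the scaled characters $\psi_q$ (those appear only in part (2)), so it applies verbatim to $\psi_1=\zeta^*$, $\psi_2=\zeta$.
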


\begin{proof}
    If $\ord(\phi) = \infty$, clearly $\ord([\phi]_S) = \infty$. For the finite case, let $\ord(\phi) = p$ and suppose that $\exists \psi \in [\phi]_S$ with $\ord(\psi) \geq p+1$. Then by Proposition \ref{prop:ord},
    \begin{align*}
        \ord(\phi) &= \ord((\psi^* \psi)^{1/2})\\
        &\geq \min(\ord(\psi^*), \ord(\psi))\\
        &= \ord(\psi)\\
        &\geq p+1,
    \end{align*}
    contradicting that $\ord(\phi) = p$. Thus $\ord([\phi]_S) \leq \ord(\phi)$. Since $\phi \in [\phi]_S$, clearly also $\ord([\phi]_S) \geq \ord(\phi)$.
\end{proof}

%%%%%%%%%%%%%%%%%%%%%%%%%%%%%%%%%%%%%%%%%%%%%
\section{Explicit and Effectively Symmetric Schemes}
\label{sec:explicitschemes}
%%%%%%%%%%%%%%%%%%%%%%%%%%%%%%%%%%%%%%%%%%%%%

Having developed the theory surrounding the decomposition of schemes into symmetric and antisymmetric components, we may now turn our attention to the problem of constructing efficient symmetric Runge--Kutta schemes. As alluded to in the introduction, the classical bottleneck of such schemes is their implicit nature, requiring an iterative root-solving algorithm to be run at each step of the solver. Whilst it is not possible to construct a Runge--Kutta scheme which is both symmetric and explicit, we may now leverage our symmetric decomposition (Corollary \ref{cor:RK_decomp}) to construct explicit schemes that are \emph{almost} symmetric, by suitably minimising their antisymmetric component. In many practical applications, this near-symmetry is expected to suffice for retaining the desirable properties of time-reversibility, while significantly reducing computational cost.

\begin{definition}
\label{def:antisym-order}
A B-series method $\Psi = B_h(\psi,f)$ is said to be of antisymmetric order $p$ if $$\sup\{n \in \mathbb{N} : \psi(\tau^+) = 0 \quad \forall |\tau| \leq n\} = p.$$ We write $\ord^+(\Psi) = \ord^+(\psi) = p$.
\end{definition}

\begin{definition}
\label{def:EESscheme}
A Runge--Kutta scheme $\Psi$ is an Explicit and Effectively Symmetric (EES) scheme of order $n$ and antisymmetric order $m$ if $\Psi$ is an explicit scheme satisfying both $\ord(\psi) = n$ and $\ord^+(\psi) = m$. Let $\mathrm{EES}(n,m)$ denote the class of such schemes.
\end{definition}

Recall that if a scheme is symmetric, i.e. $\ord^+(\psi) = \infty$, it must be of even order, $\ord(\psi) = 2k$ for some $k \in \mathbb{N}$ \cite[Theorem 8.10]{hairer1993nonstiff}. Similarly, it is clear from Proposition \ref{prop:plus_zero} that if $\ord^+(\psi) \geq \ord(\psi)$, then the scheme is of even order and odd antisymmetric order. That is, for $m \geq n$, the class $\mathrm{EES}(n,m)$ is non-empty only if $n$ is even and $m$ is odd. Indeed, the converse is also true.

\begin{theorem}[Existence of EES Runge--Kutta Schemes]
    Given any even integer $n > 0$ and any odd integer $m > n$, there exists a Runge--Kutta scheme belonging to $\mathrm{EES}(n,m)$.
\end{theorem}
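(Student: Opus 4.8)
The plan is to specify the elementary weights $\psi(\tau)$ degree by degree on all trees of size at most $m+1$, and then invoke Theorem~\ref{thm:rk_density} in the explicit form of Remark~\ref{rmk:explicit_rk_density} to realise these prescribed values by an \emph{explicit} Runge--Kutta scheme. Everything rests on one structural observation, which I would establish first: for every \emph{even} degree $d$ and every tree $\tau$ with $|\tau|=d$,
\[
    \psi^+(\tau) = \psi(\tau) + L_\tau(\psi),
\]
where $L_\tau(\psi)$ depends only on the values of $\psi$ on trees of size strictly less than $d$. This follows directly from the recursion \eqref{eq:even_component_construction}: since $d$ is even we have $\bar\psi_d(\tau)=\psi(\tau)$, while $(\psi^{-1})_d(\tau)$ contributes $-\psi(\tau)$ from the single-tree term $-\tau$ of $S\tau$, all remaining terms of $S\tau$ and of the triple sum being products of strictly-lower-degree factors. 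In particular there is \emph{no coupling among the degree-$d$ trees}: by Corollary~\ref{cor:tree_minus} the coefficient of the single tree $\tau$ in $\tau^+$ is $1$, and every other term of $\tau^+$ is a forest with at least two components. Hence, at even degree, the condition $\psi(\tau^+)=\psi^+(\tau)=0$ \emph{uniquely determines} $\psi(\tau)$ from the already-chosen lower-degree values.

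With this in hand I would build $\psi$ recursively. On trees of size $\le n$ set $\psi(\tau)=1/\tau!=a(\tau)$; this secures $\ord(\psi)\ge n$. Because the exact-solution character $a$ is odd (the flow is symmetric, cf.\ Corollary~\ref{cor:symmetric_weights}) we have $a^+=\varepsilon$, so $a^+(\tau)=0$ for every nonempty $\tau$; since $\psi^+(\tau)$ depends only on degrees $\le|\tau|$ and $\psi$ agrees with $a$ through degree $n$, this automatically gives $\psi(\tau^+)=0$ for all $|\tau|\le n$. At degree $n+1$, which is \emph{odd} because $n$ is even, Proposition~\ref{prop:plus_zero} gives $\tau^+=0$, so the antisymmetric conditions are vacuous; I therefore fix one tree $\tau_0$ with $|\tau_0|=n+1$ and set $\psi(\tau_0)\neq 1/\tau_0!$, which pins $\ord(\psi)=n$ exactly by Theorem~\ref{thm:order_p_weights}.

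For the remaining degrees $n+1<d\le m$ I proceed by parity. At odd $d$ the conditions $\psi(\tau^+)=0$ hold automatically by Proposition~\ref{prop:plus_zero}, so the $\psi(\tau)$ may be chosen arbitrarily. At even $d$ I use the structural observation to set $\psi(\tau):=-L_\tau(\psi)$ for \emph{every} tree of size $d$, enforcing $\psi(\tau^+)=0$; this guarantees $\ord^+(\psi)\ge m$. Finally, at degree $m+1$ -- which is \emph{even} since $m$ is odd -- I pick a single tree $\sigma_0$ and choose $\psi(\sigma_0)\neq -L_{\sigma_0}(\psi)$, so that $\psi(\sigma_0^+)=\psi^+(\sigma_0)\neq 0$ while all other size-$(m+1)$ values are immaterial. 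Thus the first failure of the antisymmetric condition occurs exactly at degree $m+1$, i.e.\ $\ord^+(\psi)=m$.

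Having specified $\psi$ consistently on the finite set of all trees of size $\le m+1$, Remark~\ref{rmk:explicit_rk_density} yields an explicit Runge--Kutta scheme $\Psi$ whose elementary weights match these values; by construction $\ord(\psi)=n$ and $\ord^+(\psi)=m$, so $\Psi\in\mathrm{EES}(n,m)$. The only genuinely delicate point is the triangularity claim $\psi^+(\tau)=\psi(\tau)+(\text{lower degree})$ at even degree, as everything else is bookkeeping with the parities of $n$ and $m$. I would be careful to argue it through \eqref{eq:even_component_construction} rather than by inspecting $\tau^+$ directly, since that formula exhibits both the diagonal coefficient $1$ and the dependence on strictly lower degrees transparently, which is precisely what makes the recursion solvable one tree at a time.
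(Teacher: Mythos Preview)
Your argument is correct. The triangularity claim $\psi^+(\tau)=\psi(\tau)+L_\tau(\psi)$ at even degree is exactly what makes the degree-by-degree construction go through, and your derivation from \eqref{eq:even_component_construction} is clean: $\bar\psi_d-\psi^{-1}_d$ contributes $2\psi(\tau)$ on the diagonal, the remaining terms of $S\tau$ are proper forests (hence products of strictly smaller trees), and the triple convolution only sees strictly smaller degrees. The parity bookkeeping ($n$ even so degree $n{+}1$ is odd and free; $m$ odd so degree $m{+}1$ is even and controllable) is handled correctly, and matching on trees of size $\le m{+}1$ suffices because both $\ord$ and $\ord^+$ are determined by those values.

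The paper takes a different, more structural route. Rather than building $\psi$ recursively, it \emph{postulates} an odd character $\phi$ with $\ord(\phi)=n$ and an even character $\zeta$ with $\ord^+(\zeta)=m$, sets $\gamma=\zeta\phi$, and reads off $\gamma^+=\zeta$, $\gamma^-=\phi$ from uniqueness of the even--odd factorisation; then $\ord^+(\gamma)=m$ is immediate and $\ord(\gamma)=n$ follows from Proposition~\ref{prop:equiv_class_order}. Both proofs finish identically via Remark~\ref{rmk:explicit_rk_density}. Your approach is more self-contained---it does not need to assert the existence of $\phi,\zeta$ with prescribed orders, nor invoke the $S$-equivalence machinery---and it makes the mechanism (triangularity of $\psi\mapsto\psi^+$) completely explicit. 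The paper's approach is shorter and highlights that the result is really a statement about the structure of the even--odd decomposition rather than a delicate construction.
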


\begin{proof}
    Let $\phi$ be any odd character with $\ord(\phi) = n$ and let $\zeta$ be any even character with $\ord^+(\zeta) = m$. Let $\gamma = \zeta\phi$. By definition, $\ord^+(\gamma) = \ord^+(\zeta) = m$. Since $\ord^+(\zeta) > \ord(\phi) = n$, we have $\ord(\gamma) \geq n$, and it follows from Proposition \ref{prop:equiv_class_order} that $\ord(\gamma) = n$. Moreover, it is clear that any other character $\psi$ satisfying $\psi(\tau) = \gamma(\tau)$ for all $|\tau| \leq m+1$ has $\ord(\psi) = n$ and $\ord^+(\psi) = m$. It follows from Theorem \ref{thm:rk_density} and Remark \ref{rmk:explicit_rk_density} that there must exist an explicit Runge--Kutta scheme $\Psi$ whose character $\psi$ satisfies $\psi(\tau) = \gamma(\tau)$ for all $|\tau| \leq m+1$, and hence $\ord(\psi) = n$ and $\ord^+(\psi) = m$.
\end{proof}

We present two potential approaches to deriving such schemes. Without access to the symmetric decomposition, a straightforward idea is to study the composite scheme $\psi \overline{\psi}.$ By definition, this scheme must equal the identity scheme when $\psi$ is symmetric, and by extension it is easy to see that when $\psi$ is an $EES(n,m)$ scheme, $\psi \overline{\psi}$ must match the identity up to order $m$. As such we could impose the conditions
\begin{align*}
    C(i) &:= \{\psi(\tau) = 1 / \tau!, \quad \forall |\tau| = i\}, \quad 1 \leq i \leq n,\\
    SC(i) &:= \{\psi\overline{\psi}(\tau) = 0, \quad \forall |\tau| = i\}, \quad n < i \leq m.
\end{align*}

Note that we do not need to explicitly impose $SC(i)$ for $1 \leq i \leq n$, as these follow from $C(i)$ for $1 \leq i \leq n$. It will be convenient to write $\psi\overline{\psi}(\tau) = \psi(\tilde{\tau})$, where $\tilde{\tau} := \mu \circ (\mathrm{Id} \otimes \mathrm{Inv}) \circ \Delta (\tau)$ and $\mathrm{Inv}(\tau) := \overline{\tau}$ is the canonical involution. The $SC(i)$ conditions can then be written as
\begin{align*}
    SC(i) &:= \{\psi(\tilde\tau) = 0, \quad \forall |\tau| = i\}, \quad n < i \leq m.
\end{align*}

Whilst the above is a viable approach, there is a more natural set of order conditions derived from the symmetric decomposition of B-series given in Corollary \ref{cor:RK_decomp}. Define the order conditions:
\begin{alignat*}{2}
    \mathrm{OC}(i) &= \{\psi(\tau^-) = 1/\tau!, \quad &\forall |\tau| = i\}, \quad 1 \leq i \leq n\\
    \mathrm{EC}(i) &= \{\psi(\tau^+) = 0, \quad &\forall |\tau| = i\}, \quad 1 \leq i \leq m.
\end{alignat*}

It is clear that the $C(i)$ order conditions are preferable to the more complicated $OC(i)$ conditions. However, there are several ways in which the $EC(i)$ order conditions are simpler than their counterpart, $SC(i)$. Due to Proposition \ref{prop:plus_zero}, the operation $\tau \mapsto \tau^+$ naturally generates fewer non-zero order conditions compared to $\tau \mapsto \tilde\tau$, as seen in Table \ref{table:num_conds} below. In fact, since the non-zero conditions generated by $\tau^+$ always involve the tree $\tau$, it is clear that they are independent, and as such form the smallest set of conditions possible without imposing any structural constraints on the corresponding Butcher tableau.

\begin{table}[H]
    \centering
    {\renewcommand{\arraystretch}{1.5}
  \begin{tabular}{c|ccccccccc}
    $p$ & 1 & 2 & 3 & 4 & 5 & 6 & 7 & 8 & 9 \\
    \cline{1-10}
    $\left\lvert\{\tau : \tilde\tau \neq 0, \, 1\leq|\tau| \leq p\}\right\rvert$ & 0 & 1 & 2 & 6 & 14 & 34 & 81 & 196 & 481\\
    $\left\lvert\{\tau : \tau^+ \neq 0, \, 1\leq|\tau| \leq p\}\right\rvert$ & 0 & 1 & 1 & 5 & 5 & 25 & 25 & 140 & 140
  \end{tabular}}
  \caption{Number of trees for which $\tilde\tau$ and $\tau^+$ are non-zero, forming non-trivial order conditions in $SC(i)$ and $EC(i)$ respectively.}
  \label{table:num_conds}
\end{table}

Additionally, the non-zero order conditions in $EC(i)$ are also simpler in the following sense. For an element $x \in \mathcal{H}$, define the \say{weight} of $x$, denoted $w(x)$, as the total number of nodes appearing in $x$ when it is written in its simplest form. For example,
\begin{equation*}
    w\left(\RS{nlr}\,\RS{n} + 2\,\RS{ni} - \RS{n}\,\RS{n}\right) = 8.
\end{equation*}

Given a set $\mathcal{A} \subset \mathcal{H}$ and a corresponding set of order conditions 
\begin{equation}
\label{def:orderconditions}
    A = \{\psi(x) = 0 : x \in \mathcal{A} \subset \mathcal{H}\},
\end{equation}
define the \say{weight} of $A$ by
\begin{equation*}
    w(A) := \sum_{x \in \mathcal{A}} w(x).
\end{equation*}

Note that in the construction of $EES(n,m)$ schemes for $n < m$, once we have imposed the single-tree conditions $C(i)$ for $1 \leq i \leq n$, we can substitute the values of these trees into the subsequent conditions, $SC(i)$ or $EC(i)$. This motivates the following definitions. For $x \in \mathcal{H}$, let the \say{reduced weight} $w(x, n)$ denote the weight of $x$ after all trees $\tau$ with $|\tau| \leq n$ have been replaced by $1/\tau!$. For example,
\begin{align*}
    w\left(\RS{nlr}\,\RS{n} + 2\,\RS{ni} - \RS{n}\,\RS{n}\,, 1\right) &= w\left(\RS{nlr} + 2\,\RS{ni} - 1\right) = 5,\\
    w\left(\RS{nlr}\,\RS{n} + 2\,\RS{ni} - \RS{n}\,\RS{n}\,, 2\right) &= w\left(\RS{nlr}\right) = 3.
\end{align*}

In the same way, define the \say{reduced weight} for the set of order conditions \eqref{def:orderconditions} by
\begin{equation*}
    w(A, n) := \sum_{x \in \mathcal{A}} w(x, n).
\end{equation*}

As seen in Table \ref{table:oc_weights} below, the $EC$ order conditions are more susceptible to this simplification, resulting in lower reduced weights than with the $SC$ conditions.

\begin{table}[H]
    \centering
    {\renewcommand{\arraystretch}{1.5}
  \begin{tabular}{c|cc|cc|cc}
    $i$ & $w(SC(i), 1)$ & $w(EC(i), 1)$ & $w(SC(i), 2)$ & $w(EC(i), 2)$ & $w(SC(i), 3)$ & $w(EC(i), 3)$ \\
    \cline{1-7}
    2 & 2 & 2 & - & - & - & -\\
    4 & 46 & 38 & 34 & 34 & 16 & 16\\
    6 & 807 & 665  & 641 & 549 & 503 & 387\\
    8 & 13,332 & 12,711 & 10,866 & 10,207 & 8,708 & 6,917    
  \end{tabular}}
  \caption{Reduced weights of the sets of order conditions $SC(i)$ and $EC(i)$ for various even values of $i$.}
  \label{table:oc_weights}
\end{table}

Motivated by these observations, we settle on imposing $C(i)$ for $1 \leq i \leq n$ and $EC(i)$ for $n < i \leq m$. Guided by these conditions, our aim is to derive EES schemes in the regime where $m \gg n$. Computing and imposing our order conditions is a significant task, requiring the symmetric decomposition of a large number of rooted trees. To aid with this, we use the Kauri Python package \cite{shmelev2025kauri}. We will express our schemes in terms of the general form of an explicit RK scheme,

\[
\begin{array}{c|cccccc}
0 &&&&&&\\
c_2 & a_{21} &&&&&\\
c_3 & a_{31} & a_{32} &&&&\\
c_4 & a_{41} & a_{42} & a_{43} &&&\\
\vdots & \vdots & \vdots & \vdots & \ddots &\\
c_s & a_{s1} & a_{s2} & a_{s3} & \cdots & a_{s, s-1} &\\
\hline
& b_1 & b_2 & b_3 & \cdots & b_{s-1} & b_s
\end{array}
\]

\subsection{EES(2,5)}

\begin{proposition}
\label{prop:EES_2_5}
    3-stage Runge--Kutta schemes belonging to $\mathrm{EES}(2,5)$ take the form:
    \begin{equation}
    {\renewcommand{\arraystretch}{2.2}
    \begin{array}{c|ccc}
    0 &&&\\
    \displaystyle\frac{1+2x}{4(1-x)} & \displaystyle\frac{1+2x}{4(1-x)} &&\\[7pt]
    \displaystyle\frac{3}{4(1-x)} & \displaystyle\frac{(4x-1)^2}{4(x-1)(1-4x^2)} & \displaystyle\frac{1-x}{(1-4x^2)} &\\[7pt]
    \hline
    & x & \displaystyle\frac{1}{2} & \displaystyle\frac{1}{2} - x
    \end{array}
    }
    \end{equation}
    for some $x \in \mathbb{R}$, $x \neq 1, \pm \frac{1}{2}$.
\end{proposition}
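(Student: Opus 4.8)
The plan is to impose the defining $\mathrm{EES}(2,5)$ order conditions on the general $3$-stage explicit tableau and solve the resulting system directly. Write $c_1=0$, $c_2=a_{21}$ and $c_3=a_{31}+a_{32}$, so the unknowns are $b_1,b_2,b_3,c_2,c_3$ and $a_{32}$ (with $a_{31}=c_3-a_{32}$). The conditions to impose are $C(1),C(2)$ and $EC(i)$ for $3\le i\le 5$. By Proposition \ref{prop:plus_zero} we have $\tau^+=0$ whenever $|\tau|$ is odd, so $EC(3)$ and $EC(5)$ are vacuous; and since $C(1),C(2)$ already force $\psi(\tau)=1/\tau!$ for $|\tau|\le 2$, the conditions $\psi(\tau^+)=0$ hold automatically up to order $2$. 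Thus the whole problem reduces to $C(1)\colon \sum_i b_i=1$, $C(2)\colon \sum_i b_ic_i=\tfrac12$, together with the four conditions of $EC(4)$, one per rooted tree on four nodes.

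First I would read off the required elementary weights from Lemma \ref{lemma:elementary_weights}, using $c_1=0$ to kill most terms:
\begin{align*}
  \psi(\RS{nlr}) &= b_2c_2^2+b_3c_3^2, & \psi(\RS{nIi}) &= b_3a_{32}c_2, \\
  \psi(\RS{nlir}) &= b_2c_2^3+b_3c_3^3, & \psi(\RS{nlRi}) &= b_3c_3a_{32}c_2, \\
  \psi(\RS{nIlr}) &= b_3a_{32}c_2^2, & \psi(\RS{nIIi}) &= 0.
\end{align*}
The vanishing $\psi(\RS{nIIi})=0$ is the crucial structural fact: the tall tree needs a chain of stages $i>j>k$ ending in a node with $c_k\neq 0$, which is impossible for a $3$-stage explicit scheme where $c_1=0$ is the only available bottom node. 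Substituting these weights into the $\tau^+$ expressions tabulated above (the final table of $\tau^-,\tau^+$), and using $\psi(\RS{n})=1$ and $\psi(\RS{ni})=\tfrac12$, converts $EC(4)$ into four polynomial equations.

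Next I would solve the system in the natural order. The tall-tree condition collapses to $\psi(\RS{nIi})=b_3a_{32}c_2=\tfrac18$, since $\psi(\RS{nIIi})=0$. The two conditions coming from $\RS{nlRi}$ and $\RS{nIlr}$ then become linear relations between $\psi(\RS{nlr})$ and $c_2,c_3$; subtracting them yields the node relation $c_3=c_2+\tfrac12$ and pins down $\psi(\RS{nlr})=b_2c_2^2+b_3c_3^2=\tfrac{c_2+1}{4}$. For fixed nodes these, together with $C(2)$, are \emph{linear} in $b_2,b_3$ and solve to $b_2=\tfrac12$ and $b_3=\tfrac{1-c_2}{1+2c_2}$; a short computation then shows that the remaining $EC(4)$ condition (from the bushy tree $\RS{nlir}$) is \emph{automatically satisfied}. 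This redundancy is exactly what turns a naively $0$-dimensional system into a genuine one-parameter family with $c_2$ free; finally $b_1=1-b_2-b_3$, $a_{32}=\tfrac{1}{8b_3c_2}$ and $a_{31}=c_3-a_{32}$ are determined.

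It remains to match the parametrisation. Setting $x:=b_1=\tfrac{4c_2-1}{2(1+2c_2)}$ and inverting gives $c_2=\tfrac{1+2x}{4(1-x)}$; substituting back into $c_3=c_2+\tfrac12$, $b_3=\tfrac12-x$, and the expressions for $a_{32},a_{31}$ reproduces the stated tableau after simplification (for instance $a_{32}=\tfrac{1-x}{1-4x^2}$ and $a_{31}+a_{32}=\tfrac{3}{4(1-x)}=c_3$). The excluded values are precisely the degenerate points of this family: $x=1$ makes $c_2,c_3$ undefined via the factor $1-x$, while $x=\pm\tfrac12$ makes $1-4x^2=0$ and hence $a_{32},a_{31}$ undefined (equivalently $c_2\in\{0,1\}$, where the steps $b_3a_{32}c_2=\tfrac18$ and $b_3=\tfrac{1-c_2}{1+2c_2}$ break down). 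The main obstacle is not any single computation but the bookkeeping of the coupled order conditions together with the recognition that one $EC(4)$ equation is redundant; fixing the nodes $c_2,c_3$ to linearise in the weights $b_i$, and exploiting $\psi(\RS{nIIi})=0$, is what makes the system transparent. One can additionally note that $\psi(\RS{nIi})=\tfrac18\neq\tfrac16$ forces $\ord(\psi)=2$ exactly, while $\ord^+(\psi)=5$ (rather than larger) holds for generic $x$ by a routine check of a single order-$6$ tree.
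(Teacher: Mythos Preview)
Your proposal is correct and follows precisely the methodology the paper prescribes for deriving EES schemes: impose $C(1),C(2)$ and $EC(i)$ for $3\le i\le 5$, use Proposition~\ref{prop:plus_zero} to discard the odd levels, and solve the resulting polynomial system. The paper itself states the proposition without a written proof, delegating the computation to the Kauri package; your explicit hand derivation---exploiting $\psi(\RS{nIIi})=0$ to collapse the tall-tree condition, extracting $c_3=c_2+\tfrac12$ from the difference of the $\RS{nlRi}$ and $\RS{nIlr}$ conditions, and observing the redundancy of the $\RS{nlir}$ condition---is exactly the calculation the paper leaves implicit, and your reparametrisation by $x=b_1$ matches the stated tableau entry by entry.
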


We will denote the scheme above by $\mathrm{EES}(2,5;x)$. In particular, the scheme $\mathrm{EES}(2,5; 1/4)$ is numerically simple to implement:

\begin{equation}
    {\renewcommand{\arraystretch}{1.2}
    \begin{array}{c|ccc}
    0 &&&\\
    1/2 & 1/2 &&\\
    1 & 0 & 1 &\\
    \hline
    & 1/4 & 1/2 & 1/4
    \end{array}
    }
\end{equation}

For an \say{optimal} choice of the parameter $x$, we will consider minimising the $L_1$ norm of the $C(3)$ and $EC(7)$ order conditions. Specifically, we minimise the objective function
\begin{equation*}
    \sum_{|\tau| = 3} \lvert \psi_x(\tau) - 1 / \tau! \rvert + \sum_{|\tau| = 7} \lvert \psi_x(\tau^+) \rvert \, ,
\end{equation*}
where $\psi_x$ denotes the elementary weights function of the scheme $\mathrm{EES}(2,5;x)$. Running this minimisation yields a choice of $x \approx 0.1$, corresponding to the scheme

\begin{equation}
    {\renewcommand{\arraystretch}{1.2}
    \begin{array}{c|ccc}
    0 &&&\\
    1/3 & 1/3 &&\\
    5/6 & -5/48 & 15/16 &\\
    \hline
    & 1/10 & 1/2 & 2/5
    \end{array}
    }
\end{equation}

\begin{proposition}
    $\mathrm{EES}(n,5)$ contains no 3-stage RK schemes for $n \geq 3$.
\end{proposition}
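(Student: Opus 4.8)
The plan is to split on the value of $n$ and isolate the one genuinely delicate case. For a $3$-stage \emph{explicit} scheme the classical order barrier gives $\ord(\psi)\le 3$ (an $s$-stage explicit Runge--Kutta method has order at most $s$; see \cite{butcher2016numerical}). Since membership in $\mathrm{EES}(n,5)$ requires $\ord(\psi)=n$ \emph{exactly} (Definition \ref{def:EESscheme}), every case with $n\ge 4$ is immediate: no $3$-stage explicit scheme attains order $\ge 4$, so $\mathrm{EES}(n,5)$ contains none for $n\ge 4$. The only surviving case is $n=3$, precisely because a $3$-stage explicit method \emph{can} reach order $3$.

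For $n=3$ the quickest route is the parity observation recorded just before the theorem: for $m\ge n$ the class $\mathrm{EES}(n,m)$ is non-empty only when $n$ is even and $m$ is odd. Here $m=5\ge 3=n$ with $n$ odd, so $\mathrm{EES}(3,5)$ is empty outright and a fortiori contains no $3$-stage schemes. If one prefers a self-contained argument, I would instead show directly that an order-$3$ scheme cannot have antisymmetric order $\ge 4$. The key is that for a scheme with $\ord(\psi)=3$ the conditions $EC(4)$ collapse onto the order-$4$ conditions $C(4)$: reading the table of $\tau^{+}$ values in Section \ref{sec:rationalpowers}, each of the four trees $\tau$ of size $4$ satisfies $\tau^{+}=\tau+(\text{terms of size}\le 3)$, so after substituting $\psi(\tau')=1/\tau'!$ for all $|\tau'|\le 3$ one is left with $\psi(\tau^{+})=\psi(\tau)-1/\tau!$. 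Hence $EC(4)$ holds if and only if $\psi(\tau)=1/\tau!$ for all $|\tau|=4$, i.e. if and only if the scheme has order $\ge 4$. Since antisymmetric order $5$ forces $EC(4)$ (the conditions $EC(3),EC(5)$ being automatic by Proposition \ref{prop:plus_zero}), any $3$-stage scheme in $\mathrm{EES}(3,5)$ would have order $\ge 4$, contradicting the order barrier once more.

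The main (and essentially only) obstacle is the case $n=3$: the higher-order cases are trivial consequences of the order bound, but because a $3$-stage explicit method genuinely attains order $3$ one must rule out the coexistence of order $3$ and antisymmetric order $5$. The parity remark dispatches this in one line, while the self-contained alternative rests on the clean fact that, on top of an order-$3$ scheme, the even order conditions $EC(4)$ are equivalent to the full order-$4$ conditions $C(4)$; I would double-check this equivalence tree-by-tree against the table of $\tau^{+}$ to be certain the lower-order substitutions produce exactly the shifts $-1/\tau!$.
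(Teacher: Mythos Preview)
Your argument is correct but follows a genuinely different route from the paper. The paper treats all $n\ge 3$ uniformly: any $3$-stage explicit scheme satisfying the $\mathrm{EES}(2,5)$ order conditions must lie in the one-parameter family of Proposition~\ref{prop:EES_2_5}, and a direct computation on that family gives $\psi\big(\RS{nIi}\big)=1/8$ for every value of the parameter, which contradicts the order-$3$ requirement $\psi\big(\RS{nIi}\big)=1/6$. You instead split cases: the classical order barrier ($s$-stage explicit $\Rightarrow$ order $\le s$) dispatches $n\ge 4$ immediately, and for $n=3$ you invoke either the parity remark following Definition~\ref{def:EESscheme} (so $\mathrm{EES}(3,5)=\varnothing$) or the clean observation that, on top of an order-$3$ scheme, $EC(4)$ is equivalent to $C(4)$, forcing order $\ge 4$ and closing via the barrier again. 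Your approach is more conceptual and does not rely on the explicit $3$-stage classification; the paper's approach is shorter in context, since the family has just been derived and the single evaluation $\psi\big(\RS{nIi}\big)=1/8$ handles all $n\ge 3$ at once without separating $n=3$ from $n\ge 4$.
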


\begin{proof}
    Suppose $n \geq 3$ and $\Psi \in \mathrm{EES}(n,5)$ is a 3-stage RK scheme. As a consequence of the order conditions for $\mathrm{EES}(2,5)$, we must have
    \begin{equation*}
        \psi\,\,\RS{nIi} = 1/8,
    \end{equation*}
    which is inconsistent with the order conditions for a scheme of order $\geq 3$.
\end{proof}

\subsection{EES(2,7)}

\begin{proposition*}
   For some $x \in \mathbb{R}$, 4-stage RK schemes in $\mathrm{EES}(2,7)$ take the form:
    \begin{align*}
        b_1 &= x,\\
        b_2 &= \frac{1}{2}(2 \mp \sqrt{2}) - (1 \mp\sqrt{2})x,  & \alpha &:= \frac{(2x \pm \sqrt{2})}{(2x - 1)(-2x \mp \sqrt{2} + 1)}\\
        b_3 &= (1 \mp\sqrt{2})(x-1),  & \beta &:= \frac{1}{(2x - 1)(1 \mp \sqrt{2}-2x)(2 \mp \sqrt{2} - 2x)}\\
        b_4 &= \frac{1}{2}(2 \mp \sqrt{2}) - x,
    \end{align*}
    \begin{align*}
        a_{21} &= \frac{-2\pm \sqrt{2} (1 - 2x)}{4(x-1)}\\
        a_{31} &= \frac{(2x \pm \sqrt{2} - 2)(4x \pm \sqrt{2} - 2)}{\pm 4\sqrt{2}(x - 1)}\, \alpha\\
        a_{32} &= \frac{1}{2}(-1 \pm\sqrt{2}) \,\alpha\\
        a_{41} &= \frac{(2x \mp\sqrt{2})(-40x^4 +(80 \mp 40\sqrt{2})x^3 -(88 \mp 60\sqrt{2})x^2 +(48 \mp 34\sqrt{2})x \pm 7\sqrt{2} - 10)}{4(x-1)(2x^2-1)} \, \beta\\
       a_{42} &= (2 \mp \sqrt{2}) x (x-1) (4x \pm \sqrt{2} - 2) \, \beta\\
       a_{43} &= \frac{(2 \mp\sqrt{2})(2x \mp\sqrt{2})(2 \pm \sqrt{2} - 2x)(x-1)(2x-1)}{4(2x^2-1)(2x^2 - 4x + 1)}
    \end{align*}
\end{proposition*}

We will refer to the scheme with positive $\sqrt{2}$ and parameter $x$ as $\mathrm{EES}(2,7;x)$. In particular, $\mathrm{EES}\left(2,7;\frac{1}{4}(2 - \sqrt{2})\right)$ is numerically simple to implement:

\begin{equation}
    {\renewcommand{\arraystretch}{2}
    \begin{array}{c|cccc}
    0&&&\\
    \frac{1}{2}(2 - \sqrt{2})& \frac{1}{2}(2 - \sqrt{2})&&&\\[7pt]
    \frac{1}{2}\sqrt{2}&0&\frac{1}{2}\sqrt{2}&&\\[7pt]
     1&\frac{1}{2}(2 - \sqrt{2}) &0&\frac{1}{2}\sqrt{2}&\\[7pt]
    \hline
    &\frac{1}{4}(2 - \sqrt{2}) & \frac{1}{4}\sqrt{2}  & \frac{1}{4}\sqrt{2}   & \frac{1}{4}(2 - \sqrt{2}) 
    \end{array}
    }
\end{equation}

Similarly to the case of $\mathrm{EES}(2,5)$, we will consider minimising the $L_1$ norm of the $C(3)$ and $EC(9)$ order conditions, to find an \say{optimal} parameter $x$. Specifically, we minimise the objective function
\begin{equation*}
    \sum_{|\tau| = 3} \lvert \psi_x(\tau) - 1 / \tau! \rvert + \sum_{|\tau| = 9} \lvert \psi_x(\tau^+) \rvert \, ,
\end{equation*}

where $\psi_x$ denotes the elementary weights function of the scheme in $\mathrm{EES}(2,7;x)$. Running this minimisation yields a choice of $x \approx \frac{1}{14}(5-3\sqrt{2})$, corresponding to the scheme

\begin{equation}
    {\renewcommand{\arraystretch}{2}
    \begin{array}{c|cccc}
    0&&&\\
    \frac{1}{3}(2-\sqrt{2}) & \frac{1}{3}(2-\sqrt{2})&&&\\[7pt]
    \frac{1}{6}(2 + \sqrt{2}) & \frac{1}{24}(-4+\sqrt{2})  & \frac{1}{8}(4 + \sqrt{2})\displaystyle  &&\\[7pt]
    \frac{1}{6}(4 + \sqrt{2}) &\frac{1}{168}(-176+145\sqrt{2}) &\frac{3}{56}(8-5\sqrt{2})&\frac{3}{7}(3-\sqrt{2})&\\[7pt]
    \hline
    & \frac{1}{14}(5-3\sqrt{2}) & \frac{1}{14}(3 + \sqrt{2}) & \frac{3}{14}(-1+2\sqrt{2})   & \frac{1}{14}(9-4\sqrt{2})
    \end{array}
    }
\end{equation}

%%%%%%%%%%%%%%%%%%%%%%%%%%%%%%%%%%%%%%%%%%%%%
\subsection{Stability}
\label{sec:stability}
%%%%%%%%%%%%%%%%%%%%%%%%%%%%%%%%%%%%%%%%%%%%%

We briefly discuss the stability of the family of schemes presented above. In addition, Appendix \ref{appendix:sym_stability} provides a brief discussion of the stability of the symmetric components of RK schemes. Recall that the stability of a numerical scheme is defined in terms of its long-term behaviour when applied to linear ODEs.

\begin{definition}
    Consider applying a Runge--Kutta method $\Psi$ to the ODE $\frac{dy}{dt} = \lambda y$, $\lambda \in \mathbb{C}$. The $n^{th}$ step of the numerical approximation takes the form
    \begin{equation*}
        y_n = [R(h\lambda)]^n y_0.
    \end{equation*}
    We call $R : \mathbb{C} \to \mathbb{C}$ the stability function of the scheme $\Psi$, and the set
    \begin{equation*}
        \mathcal{D} = \{z \in \mathbb{C} : |R(z)| < 1\}
    \end{equation*}
    the stability domain. In particular, when $\{z \in \mathbb{C} : \mathrm{Re}(z) < 0\} \subset \mathcal{D}$,  $\Psi$ is said to be A-stable.
\end{definition}

Since symmetric schemes are implicit, they are often A-stable.
The EES schemes introduced above are explicit and therefore cannot be A-stable. Nonetheless, their near-symmetric properties make them surprisingly stable compared to classical explicit RK schemes, despite their low order. \cref{fig:ees} illustrates the stability regions of our 3-stage $EES(2,5; 1/10)$ and 4-stage $EES(2,7; \frac{1}{14}(5 - 3\sqrt{2}))$ schemes, compared to Kutta's 3-stage RK3, the classic 4-stage RK4 scheme and Nystr\"om's 6-stage RK5. $EES(2,5;1/10)$ achieves a stability region similar to that of RK4, despite requiring one fewer stage. Remarkably, $EES(2,7; \frac{1}{14}(5 - 3\sqrt{2}))$ is significantly more stable than both RK4 and RK5, although it only requires 4 stages. We provide detailed contour plots of the stability regions for our EES schemes, as well as their order stars \cite{wanner1978order}, in Appendix \ref{appendix:EES_stars}.

\begin{figure}[h]
    \centering

    \begin{subfigure}[t]{0.44\textwidth}
        \centering
        \includegraphics[width = \textwidth, trim={0.1cm 0.1cm 0.1cm 0.8cm},clip]{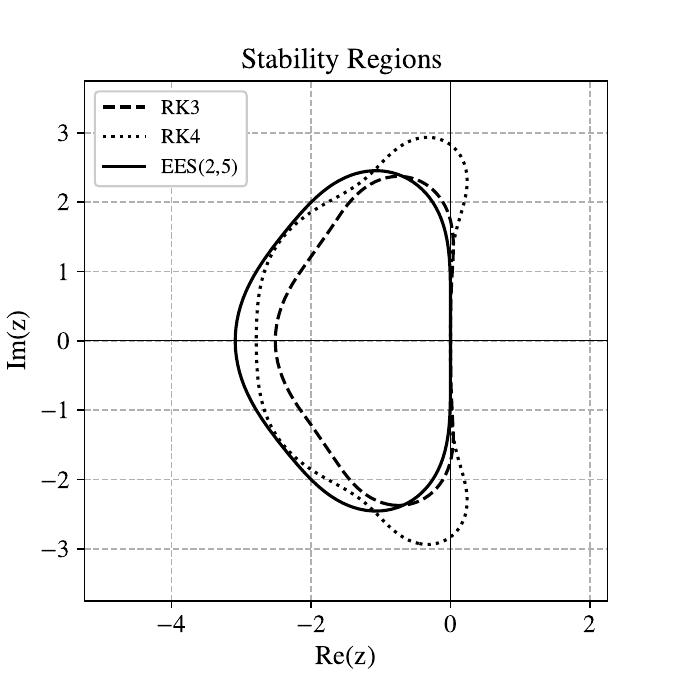}
    \end{subfigure}%
    ~ 
    \begin{subfigure}[t]{0.44\textwidth}
        \centering
        \includegraphics[width = \textwidth, trim={0.1cm 0.1cm 0.1cm 0.8cm},clip]{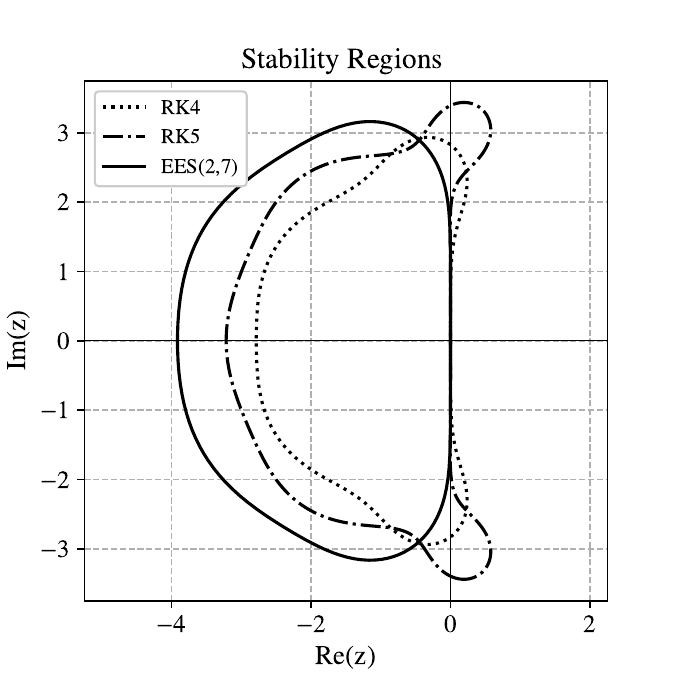}
    \end{subfigure}
    \caption{Stability domains for Kutta's RK3, RK4, Nystr{\"o}m's RK5, $EES(2,5;1/10)$ and $EES(2,7;\frac{1}{14}(5 - 3 \sqrt{2}))$}
    \label{fig:ees}
\end{figure}

%%%%%%%%%%%%%%%%%%%%%%%%%%%%%%%%%%%%%%%%%%%%%
\subsection{Backward Error Analysis}
%%%%%%%%%%%%%%%%%%%%%%%%%%%%%%%%%%%%%%%%%%%%%

Backward error analysis is a common tool in the study of geometric integrators. Whilst forward error analysis studies the local and global errors of a solution, backward error analysis aims to find a \say{modified} differential equation $\frac{d}{dt}{\tilde{y}} = f_h(\tilde{y})$ of the form \cite{hairer2006geometric}
\begin{equation}\label{eq:backward_error}
    \frac{d}{dt}\tilde{y} = f(\tilde{y}) + hf_2(\tilde{y}) + h^2f_3(\tilde{y}) + \cdots
\end{equation}

such that the exact solution to the modified equation matches the approximated solution to the original equation at all points $y_n$. The derivation of these modified equations has been extensively studied from a Hopf algebraic point of view, linking them to a new Hopf algebra on rooted trees \cite{chartier2010algebraic, calaque2011two}. When applied to symmetric schemes, it is well known that the modified equation \eqref{eq:backward_error} possesses an expansion in even powers of $h$. In this section, we briefly point out that the same is true for schemes of antisymmetric order $m$, such as our EES schemes, but only for terms up to order $h^m$.

\begin{theorem}[{\cite[Section IX.2; Theorem 2.2]{hairer2006geometric}}]
    For a symmetric method, $f_j(y)=0$ whenever $j$ is even, so that \eqref{eq:backward_error} has an expansion in even powers of $h$.
\end{theorem}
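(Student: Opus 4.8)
The plan is to use the defining (formal) property of the modified equation together with the behaviour of the adjoint method under the sign flip $h \mapsto -h$. Write $f_h := f + h f_2 + h^2 f_3 + \cdots$ for the modified vector field, so that $f_j$ is the coefficient of $h^{j-1}$, and recall that $f_h$ is uniquely determined, as a formal power series in $h$, by the requirement that its exact time-$h$ flow reproduce the numerical map, $\Phi_{h, f_h} = \Psi_h$. First I would record the elementary flow identity $\Phi_{t,g}^{-1} = \Phi_{-t,g}$, valid for any fixed vector field $g$, which is the only analytic ingredient needed.

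The core step is to identify the modified equation of the adjoint method $\Psi_h^* = \Psi_{-h}^{-1}$. Substituting $h \mapsto -h$ in the identity $\Phi_{h,f_h} = \Psi_h$ — acting on both the time argument and the $h$-dependent coefficients of the field — yields $\Phi_{-h, f_{-h}} = \Psi_{-h}$, where $f_{-h}$ denotes the series obtained from $f_h$ by replacing $h$ with $-h$. Inverting and applying the flow identity gives
\[
    \Psi_h^* = \Psi_{-h}^{-1} = \Phi_{-h, f_{-h}}^{-1} = \Phi_{h, f_{-h}},
\]
so that, by uniqueness, the modified vector field of the adjoint method is exactly $f_{-h}$. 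This is the heart of the argument: passing to the adjoint corresponds precisely to the substitution $h \mapsto -h$ at the level of the modified equation.

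Finally I would invoke symmetry. If $\Psi$ is symmetric then $\Psi_h = \Psi_h^*$, and since the modified vector field is unique the two series must coincide, $f_h = f_{-h}$. Comparing the coefficient of $h^{j-1}$ on both sides gives $f_j = (-1)^{j-1} f_j$, which is vacuous for odd $j$ but forces $f_j = 0$ for every even $j$; hence \eqref{eq:backward_error} contains only even powers of $h$. This sign flip has a transparent incarnation in the Hopf-algebraic language of the paper: negating the step size is the canonical involution, $B_{-h}(\psi, y_0) = B_h(\bar\psi, y_0)$, and a symmetric method is precisely one whose character is odd, $\bar\psi = \psi^{-1} = \psi \circ S$ by Corollary \ref{cor:symmetric_weights}; the parity of the surviving terms is then a direct reflection of this oddness, which suggests an alternative B-series proof working entirely at the level of the (logarithm of the) character.

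The main obstacle is not the algebra but the formal bookkeeping. One must first establish that $f_h$ exists and is unique as a formal power series, which is standard \cite{hairer2006geometric}, and then handle with care the fact that the substitution $h \mapsto -h$ acts on the flow in two places simultaneously — the integration time and the coefficients of the vector field — so that the adjoint is correctly identified with $f_{-h}$ rather than with some other reparametrisation. Once that identification is in place, the conclusion is immediate.
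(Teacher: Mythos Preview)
Your proposal is correct and follows essentially the same approach as the cited proof from \cite{hairer2006geometric}: identify the modified vector field of the adjoint as $f_{-h}$ (equivalently $f_j^* = (-1)^{j+1}f_j$, exactly the relation the paper invokes in the proof of the subsequent theorem), then use symmetry and uniqueness to force $f_h = f_{-h}$ and hence $f_j = 0$ for even $j$. The paper itself does not reprove this result but simply cites it, and your argument is precisely the standard one it relies on.
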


\begin{theorem}
    For a method of antisymmetric order $m$, $f_j(y)=0$ whenever $j$ is even and $j \leq m$, so that \eqref{eq:backward_error} has an expansion in even powers of $h$ up to terms of order $h^m$.
\end{theorem}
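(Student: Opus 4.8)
The plan is to lift the argument used for the symmetric case and control precisely how far it survives when the method is only \emph{antisymmetrically} symmetric up to order $m$. Recall that for a symmetric method the key fact is that the adjoint $\Psi^*$ coincides with $\Psi$, which forces the modified vector field $f_h$ to be invariant under the substitution $h \mapsto -h$, killing all even-index terms. For an $EES$ scheme of antisymmetric order $m$ we do not have $\Psi = \Psi^*$, but by Definition \ref{def:antisym-order} and Corollary \ref{cor:RK_decomp} the antisymmetric component $\Psi^+$ is trivial up to order $m$, i.e.\ $\psi(\tau^+) = 0$ for all $|\tau| \leq m$. Since $\psi^*\psi = (\psi^-)^2$ and $\psi^+ = \psi(\psi^-)^{-1}$, the composite character $\psi^*\psi$ agrees with $\varepsilon$ on all trees of order at most $m$; equivalently $\Psi^* \circ \Psi$ equals the identity map up to and including terms of order $h^m$.

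The main step is to translate this near-identity statement for $\Psi^* \circ \Psi$ into a parity statement for the modified equation. First I would recall the standard relation between the modified vector field of a composed method and those of its factors: if $f_h$ is the modified field associated to $\Psi$, then the modified field associated to the adjoint $\Psi^*$ is obtained from $f_h$ by the sign flip $h \mapsto -h$, that is $f_{-h}$. This is precisely the mechanism behind the symmetric theorem cited above. Next I would use that the modified field of a composition $\Psi^* \circ \Psi$ is, to leading orders, governed by the Baker--Campbell--Hausdorff combination of the two modified fields; the crucial point is that $\Psi^* \circ \Psi$ being the identity up to order $h^m$ forces the symmetric (even in $h$) part of $f_h$ to vanish through that order. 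Writing $f_h = \sum_{j\ge 1} h^{j-1} f_j$ and its reflection $f_{-h} = \sum_{j\ge 1}(-1)^{j-1}h^{j-1}f_j$, the even-$j$ terms are exactly those that change sign under $h\mapsto -h$. The condition that $\Psi^* \circ \Psi$ reduces to the identity map modulo $h^{m+1}$ yields, order by order, that $f_j = 0$ for every even $j \le m$, while imposing no constraint on the terms beyond $h^m$ because the antisymmetric component $\psi^+$ is no longer trivial there.

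Concretely I would carry out an induction on $j$ through $j \le m$. The base case reproduces the order-two symmetry already guaranteed. For the inductive step, assuming $f_2 = \cdots = f_{j-1} = 0$ with $j$ even and $j \le m$, the leading discrepancy between $f_h$ and $f_{-h}$ appears at order $h^{j-1}$ and equals $2f_j$ up to the vanishing lower terms; the identity-to-order-$h^m$ condition on $\Psi^* \circ \Psi$ then forces $2f_j = 0$, hence $f_j = 0$. The bookkeeping is cleanest if phrased at the level of B-series characters rather than vector fields directly, using that $\psi^*\psi$ and $\varepsilon$ agree on $\mathcal{H}_{\le m}$, and that the logarithm (the infinitesimal character, as in the proof of Theorem \ref{thm:hopf_roots}) encodes the modified vector field; the even-graded part of this logarithm is what vanishes through degree $m$.

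\textbf{Main obstacle.} The delicate point is that the standard symmetric argument uses $\Psi^* = \Psi$ \emph{exactly}, giving an exact $h \mapsto -h$ symmetry of $f_h$, whereas here I only have agreement to finite order $m$. I must verify that the higher-order antisymmetric contributions (beyond $h^m$) do not contaminate the lower-order terms through the nonlinear BCH-type coupling in the composition $\Psi^*\circ\Psi$. The grading of $\mathcal{H}$ by the number of nodes is exactly what guarantees this: a character that agrees with $\varepsilon$ on $\mathcal{H}_{\le m}$ has a logarithm whose components in degrees $\le m$ are determined solely by those low-degree values, so the finite-order truncation is clean and no leakage from higher orders occurs. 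Making this grading argument airtight is the crux of the proof.
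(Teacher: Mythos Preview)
Your overall strategy---truncate the symmetric argument to order $m$---is the right one and is exactly what the paper does. But the central identity you build everything on is wrong. You assert that $\psi^*\psi$ agrees with $\varepsilon$ on $\mathcal{H}_{\le m}$, i.e.\ that $\Psi^*\circ\Psi$ is the identity to order $h^m$. It is not: you correctly compute $\psi^*\psi=(\psi^-)^2$, but $(\psi^-)^2$ is the character of two forward steps of the symmetric component, not of the identity. Already on the single-vertex tree, $(\psi^-)^2(\RS{n})=2$, not $0$. Even for an \emph{exactly} symmetric method one has $\Psi^*=\Psi$ and hence $\Psi^*\circ\Psi=\Psi^2$, which is certainly not the identity map. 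So the whole BCH/composition mechanism you set up is being applied to the wrong object.

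The correct finite-order statement is $\psi=\psi^*$ on $\mathcal{H}_{\le m}$ (equivalently $\bar\psi\,\psi=\varepsilon$ on $\mathcal{H}_{\le m}$, i.e.\ $\Psi_{-h}\circ\Psi_h=\mathrm{Id}$ to order $h^m$). This follows immediately from $\psi^+=\varepsilon$ on $\mathcal{H}_{\le m}$: then $\psi=\psi^-$ there, and $\psi^-$ is odd so $(\psi^-)^*=\psi^-$. Once you have $\psi=\psi^*$ to order $m$, the modified vector fields of $\Psi$ and $\Psi^*$ agree through that order, i.e.\ $f_j=f_j^*$ for $j\le m$. Combine this with the always-valid relation $f_j^*=(-1)^{j+1}f_j$ (the modified field of $\Psi^*_h=\Psi_{-h}^{-1}$ is $\tilde f_{-h}$) to get $f_j=(-1)^{j+1}f_j$, hence $f_j=0$ for even $j\le m$. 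This is precisely the paper's argument; no BCH bookkeeping or induction on $j$ is needed.
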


\begin{proof}
    The proof follows exactly that of \cite[Section IX.2; Theorem 2.2]{hairer2006geometric}, noting that for a method of antisymmetric order $m$, $f_j^*(y) = (-1)^{j+1}f_j(y)$ for all $j \leq m$, where $f_j^*$ are the coefficient functions of the modified equation for the adjoint method.
\end{proof}

%%%%%%%%%%%%%%%%%%%%%%%%%%%%%%%%%%%%%%%%%%%%%
\subsection{Numerical Experiments}
%%%%%%%%%%%%%%%%%%%%%%%%%%%%%%%%%%%%%%%%%%%%%

To test our EES(2,5) and EES(2,7) schemes, we take two classic examples of ODEs describing physical systems: inverse-square law attraction and galactic orbits. In the first example, we test the ability of the schemes to recover the initial condition by integrating the system forward and then reversing the direction of time. In the second, we assess their capacity to preserve the Hamiltonian of the system. We will see that, despite the low order of $\mathrm{EES}(2,7; \frac{1}{14}(5 - 3 \sqrt{2}))$, it readily outperforms high-order explicit schemes such as Nystr{\"o}m's RK5, due to its near-conservation of the Hamiltonian. As a result, we see that $\mathrm{EES}(2,7; \frac{1}{14}(5 - 3 \sqrt{2}))$ provides solutions comparable to those of implicit schemes, but with a significantly lower runtime.

%%%%%%%%%%%%%%%%%%%%%%%%%%%%%%%%%%%%%%%%%%%%%
\subsubsection{Inverse-Square Law Attraction}
%%%%%%%%%%%%%%%%%%%%%%%%%%%%%%%%%%%%%%%%%%%%%

We take from \cite{butcher2016numerical} the problem of inverse-square law attraction in two dimensions,
\begin{equation*}
    y''(x) = -\frac{1}{\lVert y(x) \rVert^{3/2}} \, y(x),
\end{equation*}
which can be rewritten as
\begin{equation}\label{eq:inverse_square_ode}
\begin{aligned}
    \frac{dy_1}{dx} &= y_3, \hspace{10mm} & \frac{dy_3}{dx} &= -\frac{y_1}{(y_1^2 + y_2^2)^{3/2}}, \\
    \frac{dy_2}{dx} &= y_4, \hspace{10mm} & \frac{dy_4}{dx} &= -\frac{y_2}{(y_1^2 + y_2^2)^{3/2}}.
\end{aligned}
\end{equation}

Taking the initial value $y(0) = [1,0,0,1]^T$, the exact solution to the ODE \eqref{eq:inverse_square_ode} is given by $y(x) = [\cos(x), \sin(x), -\sin(x), \cos(x)]$. We take $0 \leq t \leq 10$ and a step size of $h=0.1$. Table \ref{tab:inverse_square} shows the results of various EES schemes, along with implicit midpoint as a symmetric 2nd-order benchmark. We should note that the elapsed time for the implicit midpoint scheme is heavily implementation dependent and varies with the specific parameters of the root-solving algorithm used. As such, the timings are meant only as a rough indication of complexity. As expected, the EES schemes produce a low error in recovering the initial condition. The near-symmetric properties of the schemes also seem to contribute to a lower error in the solution, with $EES(2,7;\frac{1}{14}(5 - 3\sqrt{2}))$ achieving the lowest error of all the schemes.

\begin{table}[H]
    \centering
    {\renewcommand{\arraystretch}{1.5}
  \begin{tabular}{c|ccc}
    Method & Elapsed Time (s) & Error & I.C. Error\\
    \cline{1-4}
    Implicit Midpoint &0.0635 & 1.0063E-01 & 1.6939E-13\\
    \cline{1-4}
    $EES(2,5;\frac{1}{4})$ &0.0098 & 4.9232E-02 & 3.2143E-05\\
    $EES(2,5;\frac{1}{10})$ &\textbf{0.0082} & 3.0921E-02 & 7.8639E-07\\
    $EES(2,7;\frac{1}{4}(2 - \sqrt{2}))$ &0.0130 & 2.3967E-02 & \textbf{2.1530E-10}\\
    $EES(2,7;\frac{1}{14}(5 - 3\sqrt{2}))$ &0.0169 & \textbf{1.5041E-02} & 4.9545E-10
  \end{tabular}}
  \caption{Elapsed time, error in the solution and error in recovering the initial condition (I.C. error) for various Runge--Kutta schemes applied to the ODE \eqref{eq:inverse_square_ode} for $0 \leq t \leq 10$. Step size is set to $h = 0.1$ for all schemes, and the implicit solver is limited to 100 iterations for implicit midpoint.}
  \label{tab:inverse_square}
\end{table}

%%%%%%%%%%%%%%%%%%%%%%%%%%%%%%%%%%%%%%%%%%%%%
\subsubsection{Galactic Orbits}
%%%%%%%%%%%%%%%%%%%%%%%%%%%%%%%%%%%%%%%%%%%%%

To test EES schemes on Hamiltonian mechanics, we take the example from \cite[Section II.16]{hairer1993nonstiff} of the orbit of one of $N$ stars in a galaxy, in the potential formed by the other $N-1$. We present the resulting mechanics without details of their derivation, and instead refer the interested reader to \cite[Chapter 3]{binney2011galactic}. Take the Hamiltonian
\begin{align*}
    H &= \frac{1}{2}(p_1^2 + p_2^2 + p_3^2) + \Omega (p_1 q_2 - p_2 q_1) + A \ln\left(C + \frac{q_1^2}{a^2} + \frac{q_2^2}{b^2} + \frac{q_3^2}{c^2}\right),\\[5mm]
    a&= 1.25, \quad b = 1, \quad c = 0.75, \quad A = 1, \quad C = 1, \quad \Omega = 0.25,\\
    q_1(0) &= 2.5, \quad q_2(0) = 0, \quad q_3(0) = 0, \quad p_1(0) = 0, \quad p_3(0) = 0.2,
\end{align*}

and set $p_2(0) \approx 1.69$ to be the larger of the roots for which $H = 2$. As in \cite{hairer1993nonstiff}, we compute the orbit for $0 \leq t \leq 1,000,000$ and consider the Poincar\'e sections for $q_2 = 0$, $q_1 > 0$, $\dot{q}_2 > 0$. We consider different explicit methods with a step size of 1/40, and compare the mean average error (MAE) of the Hamiltonian, as well as the number of points in the Poincar\'e section. For an \say{exact} solution, we use the DOP853 solver with a tolerance of $10^{-15}$.\par\medskip

Table \ref{tab:galactic} shows the results for various classic explicit RK methods, as well as  $EES(2,7;\frac{1}{14}(5 - 3 \sqrt{2}))$. Despite its low order, the EES scheme readily outperforms classic high-order schemes such as RK4 and Nystr{\"o}m's RK5. Remarkably, it produces a Poincar\'e section containing only 2 more points than the exact solution, and yields a Hamiltonian with mean absolute error (MAE) $8.96 \times 10^{-12}$, roughly 1853 times lower than Nystr{\"o}m's RK5. We plot the Poincar\'e section only for $EES(2,7;\frac{1}{14}(5 - 3 \sqrt{2}))$ and the exact solution, and refer the reader to \cite[Section II.16]{hairer1993nonstiff} for detailed plots corresponding to other explicit and implicit methods for comparison. Figure \ref{fig:Poincare_plots} shows these two sections. Once again, the Poincar\'e section produced by $EES(2,7;\frac{1}{14}(5 - 3 \sqrt{2}))$ is remarkably similar to the exact solution, despite the scheme's low order.

\begin{table}[H]
    \centering
    {\renewcommand{\arraystretch}{1.5}
  \begin{tabular}{c|ccc}
    Method & Order & \# Poincar\'e Points & Hamiltonian MAE\\
    \cline{1-4}
    Exact & - & 47101 & -\\
    \cline{1-4}
    Heun's RK2 & 2 & 9455 & 1.37E-3\\
    Heun's RK3 & 3 & 47766 & 4.90E-4\\
    Classic RK4 & 4 & 47004 & 1.08E-7\\
    Ralston's RK4 & 4 & 46991 & 1.18E-7\\
    Nystr{\"o}m's RK5 & 5 & 47132 & 1.66E-8\\
    $EES(2,7;\frac{1}{14}(5 - 3 \sqrt{2}))$ & 2 & \textbf{47103} & \textbf{8.96E-12}
  \end{tabular}}
  \caption{Properties of the solution trajectories for various explicit schemes with a step size of $h = 1/40$.}
  \label{tab:galactic}
\end{table}

\begin{figure*}[h]
    \centering
    \includegraphics[width = 0.7\textwidth]{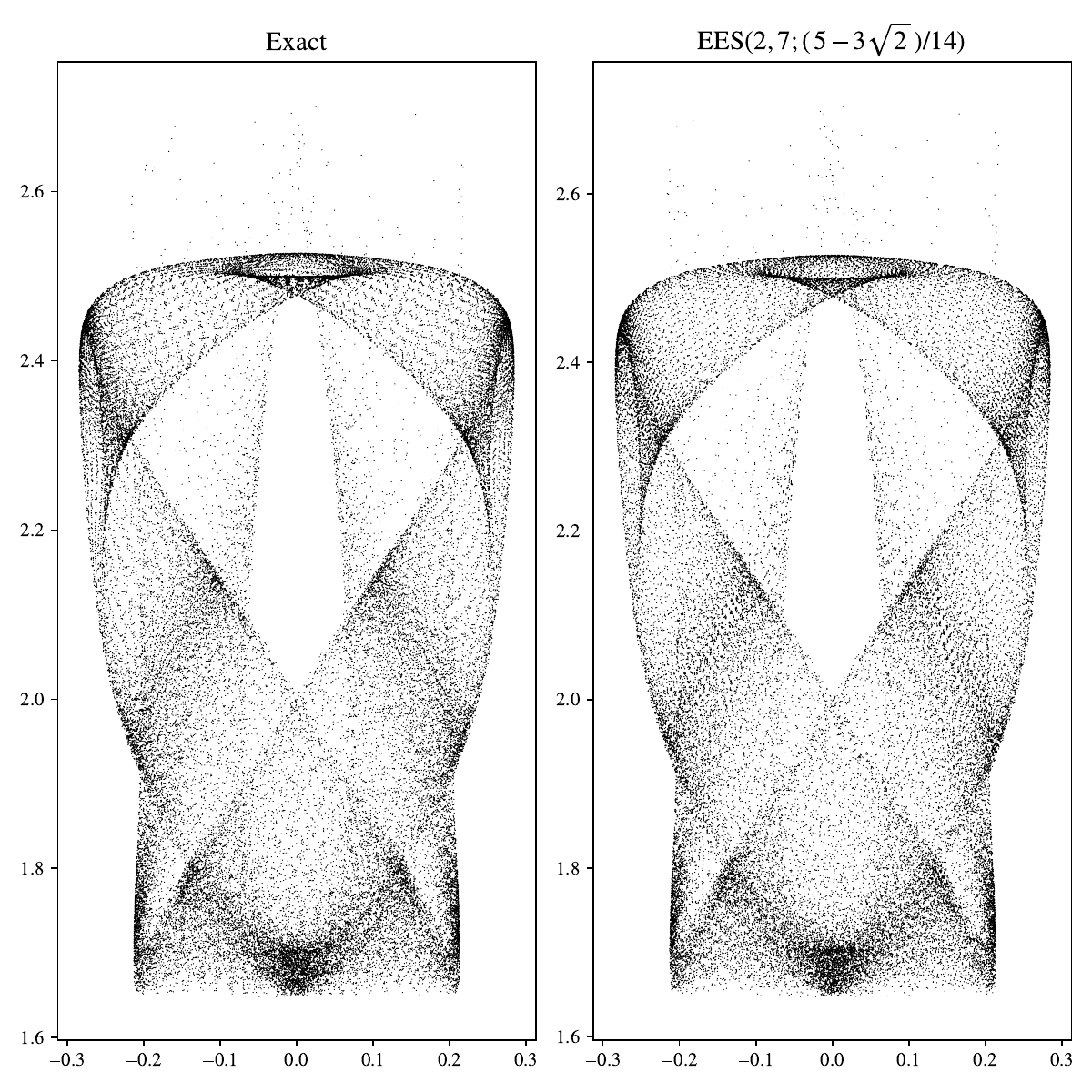}
    \caption{Poincar\'e sections for the exact solution (left) and the solution given by $EES(2,7;\frac{1}{14}(5 - 3 \sqrt{2}))$ (right).}
    \label{fig:Poincare_plots}
\end{figure*}

%%%%%%%%%%%%%%%%%%%%%%%%%%%%%%%%%%%%%%%%%%%%%
\section{Conclusion}
\label{sec:conclusions}
%%%%%%%%%%%%%%%%%%%%%%%%%%%%%%%%%%%%%%%%%%%%%

In this paper, we have discussed the Hopf-algebraic structure of symmetric schemes and introduced the symmetric decomposition of B-series methods as a tool for analysing the symmetry of B-series and Runge--Kutta methods. Motivated by this decomposition, we introduced a new class of Explicit and Effectively Symmetric (EES) Runge--Kutta schemes, which exhibit near-symmetric properties at a significantly lower computational cost than classical implicit symmetric methods. We demonstrated the performance of these schemes on two initial value problems, and showed how the schemes produce superior results when symmetry and energy conservation are of particular importance.\par\medskip

There are several potential extensions of EES schemes that are left for future research. In addition to classical applications, EES schemes may prove useful for training neural ODEs \cite{chen2018neural}, where low-order symmetric schemes are required for training. Other models such as Neural SDEs \cite{kidger2021neural} and Neural Jump ODEs \cite{herrera2020neural} have been of interest in areas including finance, with a limited number of symmetric schemes available for their training \cite{kidger2021efficient}. An extension of this paper's analysis to the stochastic case would provide a new class of efficient schemes for the training of these models. In addition, extensions to partitioned symmetric schemes \cite{wandelt2012symmetric} or schemes with adaptive step sizes may provide more robust methods for stiff equations. For cases where a higher antisymmetric order is necessary, a derivation of $\mathrm{EES}(n, 9)$ schemes may be of use.

\bibliographystyle{siamplain}
\bibliography{references}

\appendix

%%%%%%%%%%%%%%%%%%%%%%%%%%%%%%%%%%%%%%%%%%%%%
\section{Stability of Symmetric Components} \label{appendix:sym_stability}
%%%%%%%%%%%%%%%%%%%%%%%%%%%%%%%%%%%%%%%%%%%%%

In this appendix, we will briefly discuss the stability of the symmetric components of RK schemes. As we will show below, given a RK scheme $\Psi$, its symmetric component $\Psi^-$ is not in general a RK scheme itself. It is therefore easier to analyse the stability of the compositional square of the symmetric component, which can be written as $(\Psi^-)^{\circ 2} = \Psi\circ\Psi^*$ and is clearly a RK scheme itself. Recall that the stability domain for a RK scheme is given by \cite{wanner1996solving}
\begin{equation} \label{eq:RK_stability_domain}
    \mathcal{D} = \left\{z \in \mathbb{C} : \left\lvert \frac{\det(I - zA + zeb^T)}{\det(I-zA)} \right\rvert < 1 \right\},
\end{equation}
where $e^T=(1,\ldots,1)$. Thus, the stability function of a RK scheme is a rational function which is the quotient of two characteristic polynomials. It is easy to see that the composition of two RK schemes has a stability function given by the product of stability functions of the individual schemes. A simple consequence is the following.

\begin{proposition}
    Let $\Psi$ be a Runge--Kutta scheme with stability function $R : \mathbb{C} \to \mathbb{C}$. Then the scheme given by $\Psi \circ \Psi^*$ has stability function
    \begin{equation*}
        \widetilde R(z) = \frac{R(z)}{R(-z)}.
    \end{equation*}
\end{proposition}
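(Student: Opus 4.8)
The plan is to use the fact, already established in the text, that the stability function of a composition of Runge--Kutta schemes is the product of the individual stability functions. It therefore suffices to determine the stability function of the adjoint $\Psi^*$ and multiply by $R$.

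First I would observe how the stability function transforms under the operations defining the adjoint. Recall that applying $\Psi_h$ to the test equation $\tfrac{dy}{dt} = \lambda y$ reduces, after one step, to multiplication of $y_0$ by the scalar $R(z)$ with $z = h\lambda$. By Definition \ref{def:adjointmethod}, $\Psi^*_h = \Psi^{-1}_{-h}$. Restricted to the linear test equation, each one-step map is simply multiplication by a nonzero scalar, so the two defining operations act transparently: replacing $h$ by $-h$ sends the multiplier $R(z)$ to $R(-z)$, and inverting the map sends multiplication by $R(-z)$ to multiplication by $R(-z)^{-1}$. Hence the stability function of $\Psi^*$ is $z \mapsto R(-z)^{-1}$.

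Next I would invoke the multiplicativity of stability functions under composition. Since scalar multiplication on the test equation commutes, the order in $\Psi \circ \Psi^*$ is immaterial, and the resulting stability function is
\[
    \widetilde R(z) = R(z)\cdot\frac{1}{R(-z)} = \frac{R(z)}{R(-z)},
\]
as claimed.

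The only delicate point --- and thus the main (though mild) obstacle --- is justifying that the adjoint's stability function is precisely $R(-z)^{-1}$; that is, that the inversion and time-reversal defining $\Psi^*$ descend to inversion and sign-reversal of the scalar multiplier. This is immediate once one notes that on $\tfrac{dy}{dt}=\lambda y$ the family of one-step maps collapses to the multiplicative group of nonzero complex numbers, on which both operations act in the stated way. As a cross-check one could instead compute the adjoint's stability function directly from the coefficients $a^*_{ij} = b_{s+1-j} - a_{s+1-i,\,s+1-j}$ and $b^*_j = b_{s+1-j}$ of Theorem \ref{thm:RK_symmetric} via the determinant formula \eqref{eq:RK_stability_domain}, but the conceptual argument above avoids the heavier algebra.
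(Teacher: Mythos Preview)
Your proposal is correct and follows essentially the same approach as the paper: the paper's proof simply states that the result follows immediately from $\Psi^*_h = \Psi_{-h}^{-1}$ and the definition of the stability function, which is exactly the conceptual argument you spell out in more detail. Your additional remarks about the determinant check via Theorem~\ref{thm:RK_symmetric} are unnecessary but not wrong.
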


\begin{proof}
The result follows immediately from $\Psi^*_h = \Psi_{-h}^{-1}$ and the definition of the stability function.
\end{proof}

To see that the symmetric component is not in general a RK scheme, suppose by contradiction that the character $\psi^- = (\psi^*\psi)^{1/2}$ corresponds to an RK scheme $\Omega$. Since $\Omega \circ \Omega = \Psi \circ \Psi^*$, it must be the case that $\Omega$ has the stability function $(R(z) / R(-z))^{1/2}$. But this is not a rational function in general, and so cannot be the stability function of a RK scheme. For example, let $\Psi$ be the Euler scheme, and consider its square symmetric component $\psi^* \psi = (\psi^-)^2$. This corresponds to an RK scheme with stability function
\begin{equation*}
    R(z) = \frac{1+z}{1-z}.
\end{equation*}
But $\sqrt{1+z} / \sqrt{1-z}$ is not a rational function, and so $\psi^-$ cannot correspond to an RK scheme. For the remainder of this section, we will center our analysis around the squared symmetric component $(\Psi^-)^{\circ 2} = \Psi \circ \Psi^*$, since this is guaranteed to be a RK scheme and has the same stability domain as the B-series method $\Psi^-$. As with RK schemes, we define the stability domain of a B-series method to be the set of $\lambda \in \mathbb{C}$ for which the method applied to $dy/dt = \lambda y$ decays to $0$ as $t \to \infty$.

\begin{proposition}
    Let $\Psi$ be an explicit Runge--Kutta  method with stability function $R$. Then $\Psi^-$ is A-stable if and only if the zeros of $R(z)$ have negative real parts.
\end{proposition}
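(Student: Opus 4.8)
The plan is to transfer the question about the B-series method $\Psi^-$ to the rational stability function $\widetilde{R}(z) = R(z)/R(-z)$ of the genuine Runge--Kutta scheme $(\Psi^-)^{\circ 2} = \Psi \circ \Psi^*$ furnished by the preceding proposition. Since the stability function of $\Psi^-$ is a square root of $\widetilde{R}$, the condition $|{\cdot}| < 1$ is unaffected and, as already noted above, the stability domain of $\Psi^-$ coincides with that of $(\Psi^-)^{\circ 2}$; hence $\Psi^-$ is A-stable exactly when $|\widetilde{R}(z)| < 1$ for all $z$ with $\mathrm{Re}(z) < 0$, i.e.\ when $|R(z)| < |R(-z)|$ on the open left half-plane. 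Two structural facts set up the argument: because $\Psi$ is explicit, $A$ is strictly lower triangular, so $\det(I - zA) = 1$ and $R$ is a polynomial with $R(0)=1$; and because $R$ has real coefficients, $R(-\mathrm{i}y) = \overline{R(\mathrm{i}y)}$, so $|\widetilde{R}| \equiv 1$ on the imaginary axis. Writing $R(z) = \prod_k (1 - z/\rho_k)$ over its zeros and using that the multiset $\{\rho_k\}$ is closed under conjugation, I would recast $\widetilde{R}$, up to a unimodular constant, as a half-plane Blaschke-type product $\prod_k (z - \rho_k)/(z + \overline{\rho_k})$.

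For the ``if'' direction I would use the maximum-modulus principle. If every zero $\rho_k$ has $\mathrm{Re}(\rho_k) < 0$, then the poles of $\widetilde{R}$, located at $-\rho_k$, lie in the open right half-plane, so $\widetilde{R}$ is holomorphic on the closed left half-plane, tends to a unimodular constant at infinity, and has modulus $1$ on the boundary. As $\widetilde{R}$ is non-constant, the strong maximum principle forces $|\widetilde{R}| < 1$ strictly in the interior, giving A-stability. Equivalently, a factorwise computation shows that each Blaschke factor, and each conjugate-pair factor $(z^2 - 2\mathrm{Re}(\rho)z + |\rho|^2)/(z^2 + 2\mathrm{Re}(\rho)z + |\rho|^2)$, has modulus below $1$ on the whole open left half-plane precisely when the corresponding root sits in the left half-plane.

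For the ``only if'' direction I would argue by contraposition through the location of poles. A zero $\rho_0$ of $R$ with $\mathrm{Re}(\rho_0) > 0$ produces a pole of $\widetilde{R}$ at $-\rho_0$ with $\mathrm{Re}(-\rho_0) < 0$, near which $|\widetilde{R}|$ is unbounded and therefore exceeds $1$ at some left-half-plane point, ruling out A-stability. The main obstacle is the degenerate case: if $-\rho_0$ is itself also a zero of $R$ (in particular if $\rho_0$ is purely imaginary, whose conjugate is forced to be a root), then the zero of $\widetilde{R}$ cancels the pole and the pair contributes the identically-$1$ factor, so such a root leaves $|\widetilde{R}|$ on the open left half-plane untouched and A-stability can persist. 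Handling this cleanly requires the standing assumption that $\widetilde{R}$ is in lowest terms, i.e.\ that $R(z)$ and $R(-z)$ are coprime --- which simultaneously pushes all zeros off the imaginary axis and prevents cancellation --- and it is exactly here that the equivalence is delicate and must be stated with care; under that reduced-form hypothesis the pole argument shows that A-stability forbids any zero of $R$ in the closed right half-plane, completing the proof.
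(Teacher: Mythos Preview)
Your proof is correct and follows essentially the same route as the paper's: reduce to the rational function $\widetilde R(z)=R(z)/R(-z)$, observe that $R$ is a real-coefficient polynomial so $|\widetilde R|\equiv 1$ on the imaginary axis, and then conclude that A-stability is governed solely by the location of the poles of $\widetilde R$. The paper packages the last step by invoking the standard A-acceptability criterion (a rational function is A-acceptable iff its poles lie in the open right half-plane and $|\widetilde R(it)|\le 1$), whereas you unpack that criterion via the maximum-modulus principle and a Blaschke-type factorisation; these are the same argument at different levels of detail.

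Where you go beyond the paper is in flagging the cancellation issue: if $R$ has a zero $\rho$ with $-\rho$ also a zero (in particular a purely imaginary zero, which forces its negative to be a zero by real-coefficient conjugate symmetry), then the corresponding pole of $\widetilde R$ disappears and the ``only if'' direction, as stated, can fail. The paper's proof silently assumes no such cancellation occurs when it identifies ``poles of $\widetilde R$ have positive real part'' with ``zeros of $R$ have negative real part''; your proposed coprimality hypothesis on $R(z)$ and $R(-z)$ is exactly what is needed to close that gap rigorously.
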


\begin{proof}
    Since $\Psi$ is explicit, $\det(I - zA) = 1$ and $R(z)$ is a polynomial. Moreover, it is easy to see that $R$ must have real coefficients, since
    \begin{align*}
        R(z) &= \det(I - zA + zeb^T)\\
        &= z^n \det\left(\frac{1}{z}I - A + eb^T\right)\\
        &= z^n \chi(1/z)
    \end{align*}
    where $n$ is the degree of $R$ and $\chi$ is the characteristic polynomial of $A - eb^T$. Let $\widetilde R(z) = R(z) / R(-z)$. Recall that $\widetilde R$ is A-acceptable if and only if its poles have positive real parts and $|\widetilde R(it)| \leq 1$ for all $t \in \mathbb{R}$. Note that the latter condition is satisfied trivially since
    \begin{align*}
        |\widetilde R(it)| \leq 1 \quad \forall t &\Leftrightarrow |R(it)|^2 \leq |R(-it)|^2 \quad \forall t\\
        &\Leftrightarrow R(it)\overline{R(it)} \leq R(-it)\overline{R(-it)} \quad \forall t\\
        &\Leftrightarrow R(it)R(-it) \leq R(-it)R(it) \quad \forall t
    \end{align*}
    using the fact that $R$ has real coefficients. The result follows for $\Psi \circ \Psi^*$ and hence also for $\Psi^-$.
\end{proof}

\begin{example}
    For the Euler scheme,
    \begin{equation*}
        R(z) = 1 + z
    \end{equation*}
    has only one zero at $z=-1$. Thus the resulting symmetric component is A-stable, as depicted in \cref{fig:euler}. It can easily be verified that the squared symmetric component of the Euler method is the Crank--Nicolson method.

    \begin{figure}[H]
    \centering
    \includegraphics[width = \textwidth]{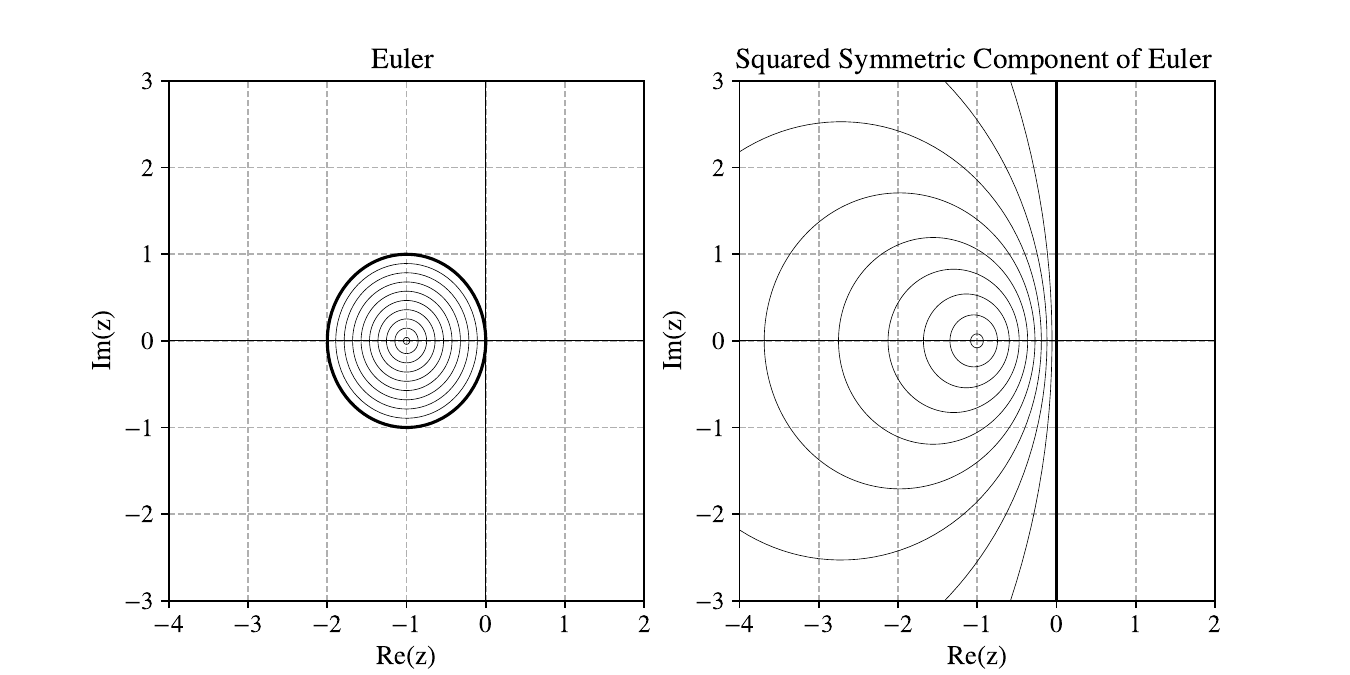}
    \caption{Stability domain for Euler and the corresponding squared symmetric component $(\Psi^-)^{\circ 2}$.}
    \label{fig:euler}
    \end{figure}
\end{example}

\begin{example}
    For the implicit midpoint scheme, the squared symmetric component is the symplectic diagonally implicit RK method:
    \begin{equation*}
        \begin{array}{c|cc}
        1/2 & 1/2 & 0\\
        3/2 & 1 & 1/2\\
        \hline
        & 1/2 & 1/2
        \end{array}
    \end{equation*}
\end{example}

\begin{example}
    For RK4,
    \begin{equation*}
        R(z) = 1 + z + \frac{z^2}{2!} + \frac{z^3}{3!} + \frac{z^4}{4!}
    \end{equation*}
    and one can show that the zeros of $R(z)$ have negative real parts. Thus the symmetric component is A-stable as shown in \cref{fig:rk4}.

    \begin{figure}[H]
    \centering
    \includegraphics[width = \textwidth]{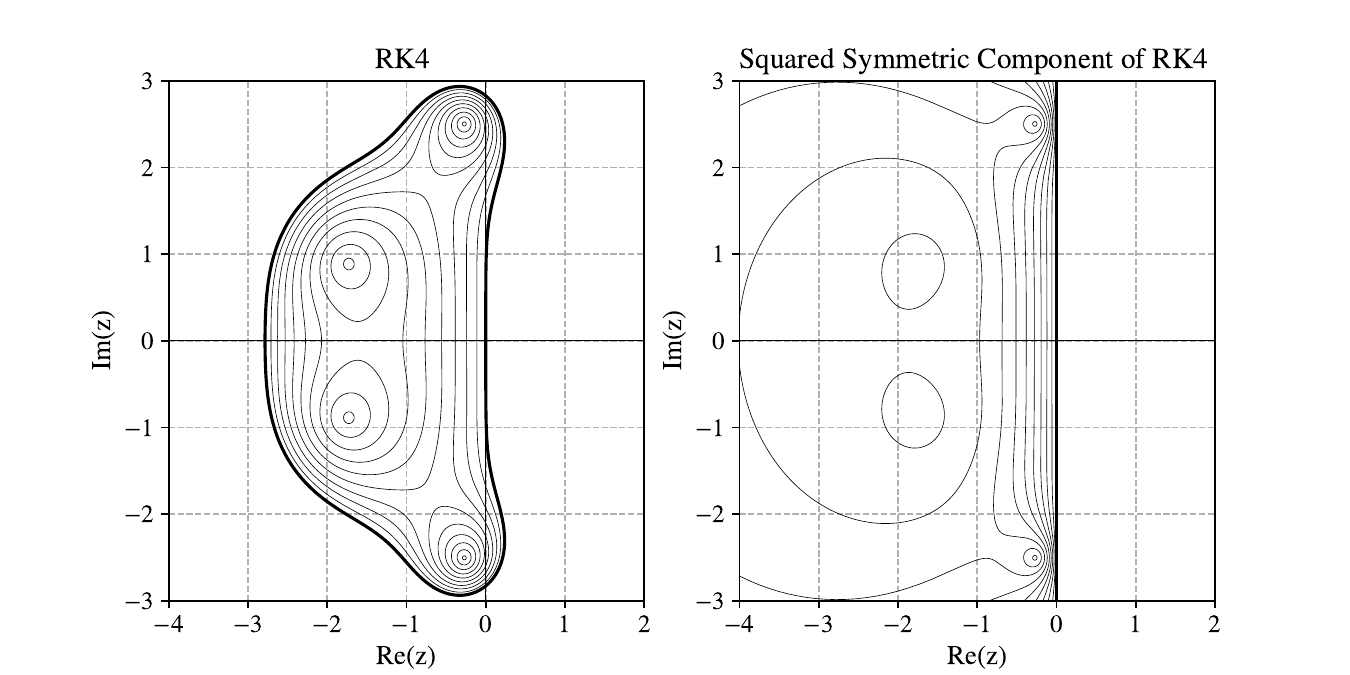}
    \caption{Stability domain for RK4 and the corresponding squared symmetric component $(\Psi^-)^{\circ 2}$.}
    \label{fig:rk4}
    \end{figure}
\end{example}

%%%%%%%%%%%%%%%%%%%%%%%%%%%%%%%%%%%%%%%%%%%%%
\section{Stability and Order Stars of EES Schemes} \label{appendix:EES_stars}
%%%%%%%%%%%%%%%%%%%%%%%%%%%%%%%%%%%%%%%%%%%%%

In this section, we provide detailed contour plots of the stability functions of $EES(2,5;1/10)$ and $EES(2,7;(5 - 3\sqrt{2})/14)$, as well as their order stars. Given a RK scheme with stability function $R(z)$, the order star \cite{wanner1978order} is defined as the region
\begin{equation*}
    \{z \in \mathbb{C} : |R(z)| > |e^z| \}.
\end{equation*}

\begin{figure}[H]
    \centering
    \includegraphics[width = \textwidth]{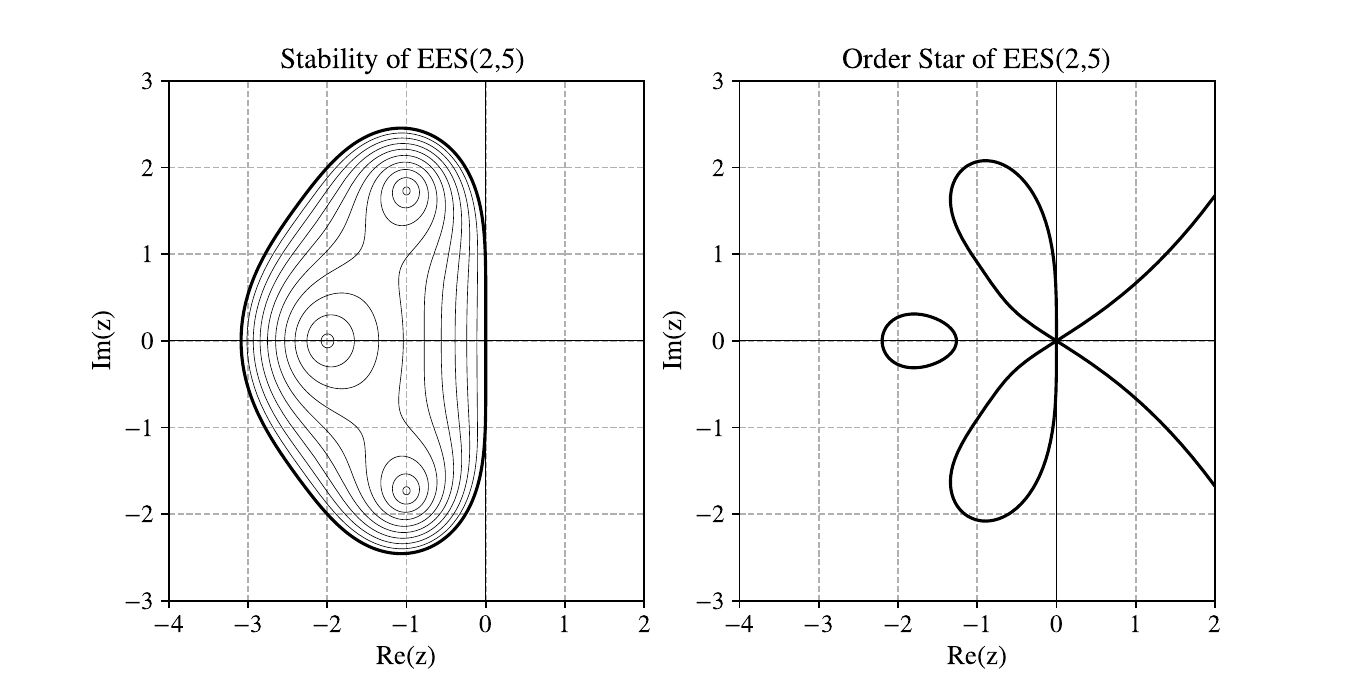}
    \caption{Stability domain and order star for $EES(2,5;1/10)$}
    \label{fig:ees25_star}
\end{figure}

\begin{figure}[H]
    \centering
    \includegraphics[width = \textwidth]{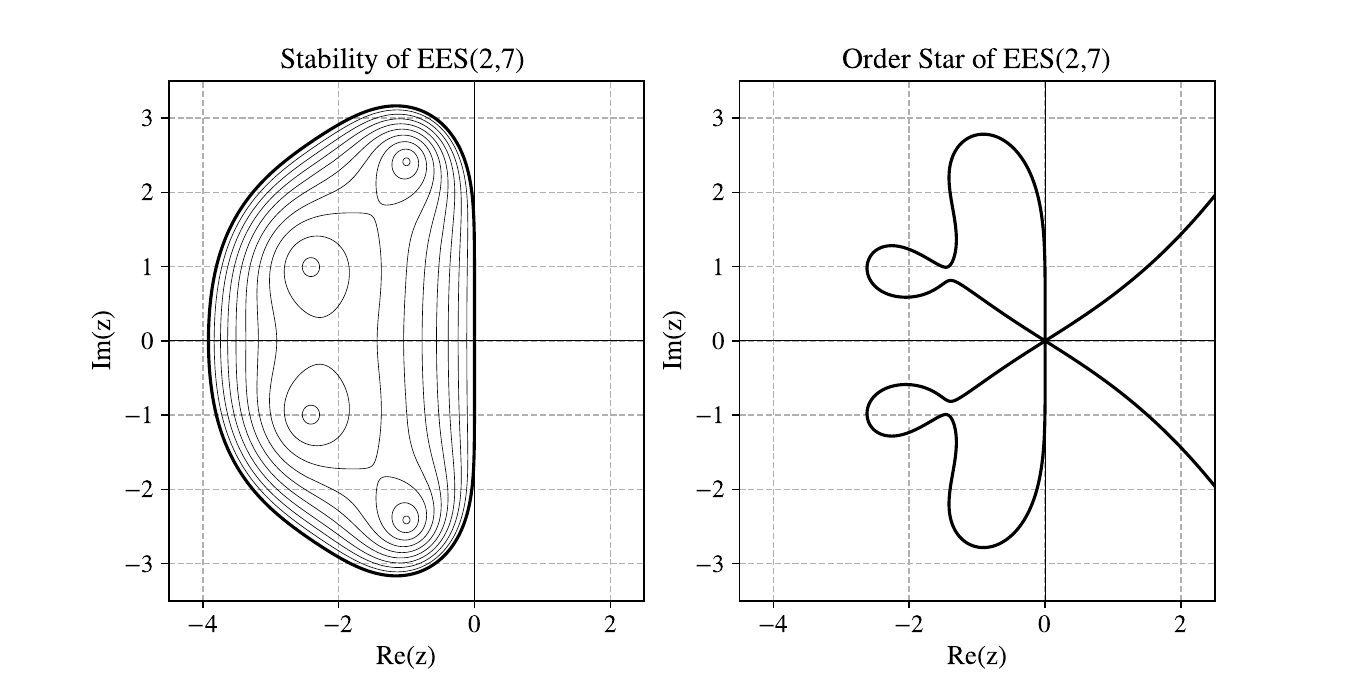}
    \caption{Stability domain and order star for $EES(2,7;(5 - 3\sqrt{2})/14)$}
    \label{fig:ees27_star}
\end{figure}

\end{document}